\documentclass[12pt]{article}

\usepackage{amssymb,amsmath,amsthm,thmtools,mathtools,tikz}
\usepackage[hidelinks]{hyperref}
\usepackage[nameinlink]{cleveref}
\newtheorem{theorem}{Theorem}[section]
\newtheorem{corollary}[theorem]{Corollary}

\newtheorem{conjecture}[theorem]{Conjecture}
\newtheorem{lemma}[theorem]{Lemma}

\newcommand{\x}{{\bf x}}
\newcommand{\y}{{\bf y}}
\newcommand{\z}{{\bf z}}
\newcommand{\bu}{{\bf u}}
\newcommand{\bv}{{\bf v}}
\newcommand{\1}{{\bf 1}}
\newcommand{\0}{{\bf 0}}
\newcommand{\al}{\alpha}
\newcommand{\dm}{{\rm diam}}

\newcommand{\e}{\epsilon}
\newcommand{\E}{\mathcal E}
\tikzstyle{vertex}=[circle, draw, inner sep=0pt, minimum size=3pt]
\newcommand{\vertex}{\node[vertex]}

\begin{document}
\title{Graphs with Minimum Algebraic Connectivity I:  \\ Proofs of
	Aldous--Fill and Guiduli--Mohar Conjectures}
\author{ Maryam Abdi$^{\,\rm a}$ \quad  Ebrahim Ghorbani$^{\,\rm b}$
	\\[.3cm]
	{\sl\normalsize $^{\rm a}$School of Mathematics, Institute for Research in Fundamental Sciences (IPM),}\\
	{\sl\normalsize P. O. Box 19395-5746, Tehran, Iran }\\
	{\sl\normalsize $^{\rm b}$Hamburg University of Technology, Institute for Algorithms and Complexity, Germany} \\
	{\tt\small m.abdi@ipm.ir\qquad  ebrahim.ghorbani@tuhh.de} }
\date{}
\maketitle
\begin{abstract}
	Aldous and Fill (2002) conjectured that the maximum relaxation time of a random walk on a connected regular graph with $n$ vertices is bounded above by $(1+o(1))\frac{3n^2}{2\pi^2}$, with asymptotic equality for even $n$. Since the relaxation time of a $d$-regular graph $G$ is $d/\mu(G)$, where $\mu(G)$ denotes its algebraic connectivity, this conjecture is closely related to the problem of minimizing algebraic connectivity among regular graphs.	
	Guiduli and Mohar (1996) conjectured that, for every fixed minimum degree $\delta=d\ge 3$ and all sufficiently large orders, graphs with minimum algebraic connectivity are path-like and, apart from bounded portions near their two ends, have a prescribed block structure. For fixed odd degree $d\ge 3$, Abdi and Ghorbani (2004) conjectured that $d$-regular graphs with minimum algebraic connectivity have the same structure.	
	We prove the Aldous--Fill conjecture and the Guiduli--Mohar conjecture, as well as the corresponding conjecture for $d$-regular graphs of fixed odd degree. Finally, we prove that, for every fixed odd degree $d\ge 3$, $d$-regular graphs, as well as graphs of fixed minimum degree $d\ge 3$, whose algebraic connectivity is asymptotically minimum have asymptotically maximum diameter. This establishes the corresponding cases of another conjecture of Abdi and Ghorbani.

\vspace{4mm}

\noindent {\bf Keywords:} Algebraic connectivity, Relaxation time of random walk, 
Minimum degree, Maximum diameter \\[.1cm]
\noindent {\bf AMS Mathematics Subject Classification\,(2020):}
05C50, 05C35
\end{abstract}

\section{Introduction}\label{sec:intro}

All graphs we consider are simple, i.e. undirected graphs without loops or multiple edges.
Additionally, we assume that they are connected.
The {\em relaxation time} of the random walk on a graph $G$  is defined by $\tau=1/(1-\eta_2)$, where $\eta_2$ is the second largest eigenvalue of
the {\em transition matrix} of $G$, that is, the matrix $\Delta^{-1} A$ in which $\Delta$ and $A$ are the diagonal matrix of vertex degrees and the adjacency matrix of $G$, respectively. If $G$ is $d$-regular, then $\tau=d/\mu(G)$, where $\mu(G)$ denotes the {\em algebraic connectivity} of $G$, namely, the second smallest eigenvalue of its Laplacian matrix $L(G)=\Delta-A$.
For the mixing-time discussion, we use the continuous-time walk with
generator $\Delta^{-1}A-I$; the discrete-time walk may be periodic.
A central problem in the study of random walks is to determine the {\em mixing time}, a measure of how fast the
random walk converges to the stationary distribution. As seen in the literature \cite{AldousFill}, the  relaxation time is the primary term controlling mixing time. Therefore, relaxation time is directly associated with the rate of convergence  of the random walk.
Our main motivation in this work is the following conjecture on the maximum relaxation time of the random walk in regular graphs. 
\begin{conjecture}[{Aldous and Fill \cite[p.~217]{AldousFill}}]\label{conj:A-F} \rm
	Over all regular graphs on $n$ vertices,
	$\max \tau =\max \frac{d}{\mu} \le(1+o(1)) \frac{3n^2}{2\pi^2}$,
	with asymptotic equality for even $n$.
\end{conjecture}

It is worth mentioning that in \cite{actt}, it is proved that the maximum relaxation time for the random walk on a   graph on $n$ vertices is $(1 +o(1))\frac{n^3}{54}$, settling another conjecture by Aldous and Fill \cite[p.~216]{AldousFill}.

\subsection{Graphs minimizing algebraic connectivity}

 We say that a graph $G$ is a {\em $\mu$-minimal} graph
with $\delta=d$ if it has the smallest $\mu$ among all graphs of the
same order with minimum degree $d$. We use the same terminology for
the class $\delta\ge d$ and for the class of $d$-regular graphs.

L.  Babai (see \cite{Guiduli}) made a conjecture that described the structure of  $\mu$-minimal cubic (i.e. $3$-regular) graphs. Guiduli \cite{Guiduli} (see also \cite{GuiduliThesis}) proved that $\mu$-minimal cubic graphs are path-like, built from specific blocks.
The result of Guiduli was improved  later  by Brand, Guiduli, and Imrich \cite{Imrich}. They completely characterized $\mu$-minimal cubic  graphs and confirmed the Babai conjecture. For every admissible even order, the minimizing cubic graph is unique. (Cubic graphs always have even orders.)
Abdi, Ghorbani and Imrich~\cite{AbGhIm} showed that the algebraic connectivity  of these graphs is $(1+o(1))\frac{2\pi^2}{n^2}$, confirming  the Aldous--Fill conjecture for $d=3$.
Guiduli \cite[Problem~5.2]{GuiduliThesis} asked for a generalization of the aforementioned result of Brand, Guiduli, and Imrich, namely the characterization of $\mu$-minimal $d$-regular graphs. 
In this direction, Abdi and Ghorbani \cite{AbGhQuartic} gave a nearly complete characterization
of $\mu$-minimal quartic  (i.e. $4$-regular) graphs.  Based on that,  they established   the Aldous--Fill conjecture for $d=4$. 
Guiduli and Mohar proposed the following generalization of the cubic
result by considering graphs with minimum degree $d$ rather than
regular graphs.
Put $L_d=K_{d+1}-e$. An $L_d$-chain consists of pairwise disjoint copies of $L_d$ arranged in a path-like structure, with consecutive copies joined by bridges; see \Cref{fig:Mohar}.


\begin{figure}[t]
	\centering
	\begin{tikzpicture}[scale=.9]
		\draw (1,0) ellipse (.25 and 1.28);
		\vertex[fill] (0) at (-.3,0) [] {};
		\vertex[fill] (1) at (1,1) [] {};
		\vertex[fill] (2) at (1,.6) [] {};
		\vertex[fill] (3) at (1,-1) [] {};
		\vertex[fill] (4) at (2.3,0) [] {};
		\vertex[fill] (5) at (3.1,0) [] {};
		\draw (4.4,0) ellipse (.25 and 1.28);
		\vertex[fill] (6) at (4.4,1) [] {};
		\vertex[fill] (7) at (4.4,.6) [] {};
		\vertex[fill] (8) at (4.4,-1) [] {};
		\vertex[fill] (9) at (5.6,0) [] {};
		\vertex[fill] (12) at (7.4,0) [] {};
		\draw (8.7,0) ellipse (.25 and 1.28);
		\vertex[fill] (13) at (8.7,1) [] {};
		\vertex[fill] (14) at (8.7,.6) [] {};
		\vertex[fill] (15) at (8.7,-1) [] {};
		\vertex[fill] (16) at (10,0) [] {};
		\vertex[fill] (17) at (10.8,0) [] {};
		\draw (12.1,0) ellipse (.25 and 1.28);
		\vertex[fill] (18) at (12.1,1) [] {};
		\vertex[fill] (19) at (12.1,.6) [] {};
		\vertex[fill] (20) at (12.1,-1) [] {};
		\vertex[fill] (00) at (13.4,0) [] {};
		\tikzstyle{vertex}=[circle, draw, inner sep=.3pt, minimum size=.3pt]
		\vertex[fill] () at (1,-.1) [] {};
		\vertex[fill] () at (1,-.2) [] {};
		\vertex[fill] () at (1,-.3) [] {};
		\vertex[fill] () at (4.4,-.1) [] {};
		\vertex[fill] () at (4.4,-.2) [] {};
		\vertex[fill] () at (4.4,-.3) [] {};
		\vertex[fill] () at (6.4,0) [] {};
		\vertex[fill] () at (6.5,0) [] {};
		\vertex[fill] () at (6.6,0) [] {};
		\vertex[fill] () at (8.7,-.1) [] {};
		\vertex[fill] () at (8.7,-.2) [] {};
		\vertex[fill] () at (8.7,-.3) [] {};
		\vertex[fill] () at (12.1,-.1) [ ] {};
		\vertex[fill] () at (12.1,-.2) [ ] {};
		\vertex[fill] () at (12.1,-.3) [] {};
		\tikzstyle{vertex}=[circle, draw, inner sep=0pt, minimum size=0pt]
		\vertex[] (s) at (-.7,0) [label=left:\footnotesize{}] {};
		\vertex[] (ss) at (13.8,0) [label=left:\footnotesize{}] {};
		\vertex[] () at (1,1.3) [label=above:\footnotesize{$K_{d-1}$}] {};
		\vertex[] () at (4.4,1.3) [label=above:\footnotesize{$K_{d-1}$}] {};
		\vertex[] (10) at (6.0,0) [label=left:\footnotesize{}] {};
		\vertex[] (11) at (7,0) [label=left:\footnotesize{}] {};
		\vertex[] () at (8.7,1.3) [label=above:\footnotesize{$K_{d-1}$}] {};
		\vertex[] () at (12.1,1.3) [label=above:\footnotesize{$K_{d-1}$}] {};
		\path
		(1) edge (0)
		(2) edge (0)
		(3) edge (0)
		(1) edge (4)
		(2) edge (4)
		(3) edge (4)
		(4) edge (5)
		(6) edge (5)
		(7) edge (5)
		(8) edge (5)
		(6) edge (9)
		(7) edge (9)
		(8) edge (9)
		(9) edge (10)
		(11) edge (12)
		(12) edge (13)
		(12) edge (14)
		(12) edge (15)
		(16) edge (13)
		(16) edge (14)
		(16) edge (15)
		(16) edge (17)
		(17) edge (18)
		(17) edge (19)
		(17) edge (20) 
		(18) edge (00)
		(19) edge (00)
		(20) edge (00)
		(s) edge (0)
		(ss) edge (00);
	\end{tikzpicture}
	\caption{Structure of $\mu$-minimal graphs with $\delta=d$, excluding at most a constant number of vertices at either end.  Here $K_l$ is the complete graph of order $l$.}\label{fig:Mohar}
\end{figure}

\begin{conjecture}[Guiduli and Mohar, see {\cite[p.~88]{GuiduliThesis}}]\label{conjecture:mohar}
\rm Let $G$ be a $\mu$-minimal graph with $\delta=d$. Then $G$ is
path-like, and except for some blocks near each end, its remaining
blocks form an $L_d$ chain (see \Cref{fig:Mohar}).
\end{conjecture}
Abdi and Ghorbani \cite{AbGhDiam} conjectured that, for odd $d$, the $d$-regular graphs with minimum algebraic connectivity have the same structure as in \Cref{conjecture:mohar}. For even $d$, however, a $d$-regular graph cannot contain a bridge, so this structure is not possible; they therefore conjectured a different  structure for $\mu$-minimal graphs in the even-degree case.


\subsection{The main results}
Our first result proves the Guiduli--Mohar conjecture, \Cref{conjecture:mohar}, together with the odd-degree case of the Abdi--Ghorbani conjecture \cite{AbGhDiam}. It also shows that the $\mu$-minimal graphs in the respective families have asymptotically maximum diameter, thereby settling another conjecture from \cite{AbGhDiam}.

describes the graphs with minimum algebraic
connectivity. It treats the two minimum-degree classes and the
regular class when the degree is odd. For a chosen class, $m_d(n)$
denotes its minimum algebraic connectivity at order $n$. In the
odd-regular class, all orders are even.

\begin{theorem}\label{thm:GMfull}\label{thm:oddregularfull}
For each fixed $d\ge3$, there are constants $N_d,B_d$ such that the
following holds. Let $G$ be a $\mu$-minimal graph of order $n\ge N_d$
in either class $\delta=d$ or $\delta\ge d$. 
Then $G$ is path-like. Outside two connected portions at the ends
of its block-tree, containing at most $B_d$ vertices altogether,
it is a $L_d$ chain. 
When $d$ is odd, the
same conclusion holds if $G$ is $\mu$-minimal among $d$-regular
graphs of order $n$.
\end{theorem}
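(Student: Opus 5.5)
The plan is a sharp comparison between an explicit upper bound, coming from a construction, and a lower bound, coming from a structural energy argument. Write $f$ for a unit Fiedler vector of $G$, orthogonal to $\1$, so that
\[
\mu(G)=\sum_{uv\in E}(f_u-f_v)^2 .
\]
Build an explicit $L_d$ chain $H_n$ in each class: blocks $L_d=K_{d+1}-e$ joined by bridges between the two degree-$(d-1)$ vertices. For odd $d$ and the regular class, cap the two ends with bounded $d$-regular gadgets so that all degrees equal $d$. Such a chain has about $n/(d+1)$ bridges in series. Testing with a cosine along the chain gives
\[
m_d(n)\le \mu(H_n)\le (1+o(1))\,\frac{\pi^2 c_d}{n^2}.
\]
Here $c_d$ is the ``resistance-per-vertex'' constant of the chain: each $L_d$ costs $d+1$ vertices and contributes effective resistance $1+2/(d-1)$ (one bridge plus the $K_{d+1}-e$ interior between its two ports). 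The whole proof then consists of showing that any graph with $\delta\ge d$ of order $n$ that is not of this form, apart from $B_d$ vertices, has $\mu$ strictly larger.

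\textbf{Step 1 (path-likeness and monotonicity).} Take a $\mu$-minimal $G$. Standard interlacing and edge-deletion arguments show the following. Since $\mu(G)=O(1/n^2)$, the diameter is $\Omega(n)$. Moreover $f$ is monotone along the block-cut tree away from a bounded region: any branch off a long path on which $f$ is small can be cut and reattached at an end, which would decrease $\mu$. This yields a path-like block tree whose spine carries $f$ essentially monotonically.

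\textbf{Step 2 (local density inequality).} Partition $G$ into consecutive ``slices'' between cut sets, namely cut vertices or small edge cuts. For each slice $S$ with $|S|$ vertices and effective resistance $r(S)$ between its boundary ports, prove the inequality
\[
r(S)\le c_d\,|S| + \text{const},
\]
with equality only when $S$ is an $L_d$ block plus a bridge. This is a finite extremal problem in $\delta\ge d$ graphs: to have large resistance per vertex, you want bridges, and between bridges the cheapest 2-terminal graph with all degrees at least $d$ is $K_{d+1}-e$. Making the inequality quantitative, with a deficit $\eta>0$ for every non-$L_d$ slice of bounded size and a linear deficit for larger slices, is the key lemma.

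\textbf{Step 3 (continuum comparison).} Via the Rayleigh quotient, model $G$ as a weighted path, a string with masses $|S_i|$ and resistances $r(S_i)$. The minimal eigenvalue of such a string with total mass $n$ and total resistance $R$ is at least $(1+o(1))\pi^2/(nR)$, up to the weighting induced by $f$. Since $f$ is cosine-like, with amplitude bounded below on all but the ends, each non-$L_d$ slice in the middle costs a definite relative increase $\Omega(1/n^3)$ in $\mu$. Only slices where $f'$ is tiny, i.e. near the two ends, can deviate cheaply. Summing, a $\mu$-minimal graph may have only $O(1)$ deviating vertices, confined to the two ends. This gives $B_d$, and also the Aldous–Fill asymptotics as a corollary. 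For odd regular $d$ the same argument applies: the construction is $d$-regular, and only the end gadgets differ.

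\textbf{Main obstacle.} The main obstacle is Step 3's localization. A global asymptotic comparison only gives $o(n)$ defects. Bounding them by a constant needs a perturbation argument: replace a defect slice in the interior by an $L_d$ block, shifting vertices to an end, and show that $\mu$ strictly drops. This requires controlling $f$ pointwise (gradient of order $1/n^{3/2}$, comparable across the middle), using discrete Sturm-type estimates on the string equation. Adjusting $n$ divisibility also requires flexible end gadgets.
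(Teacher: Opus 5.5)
There is a genuine gap, and it sits in Step~1, which you treat as routine. No standard interlacing or edge-deletion argument shows that a $\mu$-minimal graph is path-like with its Fiedler vector monotone along the block-tree. That is the \emph{conclusion} of the theorem, and the paper proves it last. Your move of cutting off a branch where $f$ is small and reattaching it at an end is also unjustified: you would need to check degrees (including in the regular class), connectivity, and that the Rayleigh quotient actually drops.

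Steps~2 and~3 depend entirely on Step~1. Your slices and string model need the graph to be a series of two-terminal pieces. But a graph with $\delta\ge d$ and $n^2\mu\le(d-1)\pi^2+\varepsilon$ has, a priori, no bridges or cut vertices, and neighbouring regions may be joined by many edges.

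The missing idea is to extract structure from the Fiedler vector with no prior decomposition. Order the vertices so that $x_1\le\cdots\le x_n$, count the edges $q_i$ crossing each prefix cut, and expand $\x^\top L(G)\x=\sum_{i,j}K_{ij}a_ia_j$. Degree counting gives $q_a+q_{a+t}\ge t(d+1-t)$, so cuts of size at most $d-2$ are isolated. A local inequality on a window starting at each such cut, strict unless the cut is a bridge, yields $\mu\ge(d-1)\y^\top L(P_n)\y$ with an exact nonnegative remainder. Stability of the path eigenvector then turns a small remainder into the statement that all but $O_d(n^{2/3})$ vertices lie in $L_d$ blocks strung along bridges (Theorem~\ref{thm:stability}). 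Only after this does your two-terminal resistance bound, which is Theorem~\ref{gm:thm:terminal}, have pieces to act on.

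Step~3 also fails as written, in three places.
\begin{itemize}
\item The bound $\mu\ge(1+o(1))\pi^2/(nR)$ is false. Two copies of $K_{n/2}$ joined by an edge have $R\approx1$ but $\mu\le4/n<\pi^2/n$. The correct comparison is with a uniform string of length $n/(d-1)$, using that resistance per vertex is at most $1/(d-1)$ \emph{locally everywhere}. This is what the interpolated path vector encodes.
\item Summing does not confine defects to $O(1)$ vertices. A defective index at distance $r$ from an end costs only about $r^2/n^5$, so slack of order $n^{-3}$ still allows $O(n^{2/3})$ defects near the ends. Moving interior defects outward only pushes them into the ends. The paper does that step by same-order replacement with an $L_d$-chain piece and by interchanging a bounded defect with an adjacent $L_d$ block (Lemma~\ref{gm:lem:defectmoves}), which is the part closest to your plan. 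Bounding the end portions themselves needs a separate argument. The paper compares the attachment response $1+b\mu+\mu^2\1^\top(A_H-\mu I)^{-1}\1$ of a large end subgraph with that of an $L_d$-chain end graph of the same order, and concludes by an inertia count (Lemmas~\ref{gm:lem:replacecap} and~\ref{gm:lem:weightedloss}).
\item Nothing in your plan excludes branching inside the bounded end portions, which is required for the block-tree to be a path. The paper handles this with degree-preserving rearrangements (Lemmas~\ref{gm:lem:graft}--\ref{gm:lem:blockbranch}).
\end{itemize}
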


The companion paper~\cite{AbGhCompanion-even} settles the even-degree case of the conjecture in \cite{AbGhDiam} for $\mu$-minimal regular graphs.

Applying the method of \cite[Section~3.2]{AbGhDiam} for computing the algebraic connectivity of path-like graphs, and accounting for the bounded end portions, yields the following asymptotic formula for the minimum algebraic connectivity.

\begin{corollary}\label{thm:minimum}
For each fixed $d\ge3$, the minimum of $\mu$ at order $n$ in either
class $\delta=d$ or $\delta\ge d$ is
\begin{equation}\label{eq:sharpminimum}
\frac{(d-1)\pi^2}{n^2}+O_d(n^{-3}).
\end{equation}
The same formula holds in the $d$-regular class for each fixed odd
$d\ge3$, with $n$ tending to infinity through even integers.
\end{corollary}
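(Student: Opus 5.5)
Since \Cref{thm:GMfull} pins down the structure of every $\mu$-minimal graph up to $B_d$ vertices, the corollary comes down to computing $\mu$ for a long $L_d$ chain with bounded end pieces. We get the lower bound by applying the structure theorem to a minimizer, and the upper bound by exhibiting a concrete graph in each class with $\mu\le (d-1)\pi^2/n^2+O(n^{-3})$.

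\textbf{Upper bound.} We take an $L_d$ chain with $k$ blocks. Each $L_d$ has $d+1$ vertices: two ends, each of degree $d-1$ inside the block, and $d-1$ middle vertices of degree $d$. The bridges raise the end degrees to $d$. At each extremity we attach a bounded gadget (for instance a copy of $K_{d+1}$ with one edge subdivided, or an odd-regular end block when $d$ is odd) so that the graph lies in the required class and has order exactly $n$. Small padding gadgets absorb $n \bmod (d+1)$, using only even $n$ in the regular case. For the test vector we assign to each block $j$ the value $\cos(\pi (j-\tfrac12)/k)$ on all its vertices and interpolate across the bridges; end gadgets receive constant values. The Rayleigh quotient then has numerator about $\sum_j (\text{jump across bridge } j)^2\approx k\cdot(\pi/k)^2\cdot\tfrac12$ and denominator $(d+1)\cdot k/2$. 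Since $k=n/(d+1)+O(1)$, this gives roughly $\pi^2/((d+1)k^2)=(d+1)\pi^2/n^2$. That is too big, so we refine: vertices inside a block should not be constant. We solve the local harmonic problem inside $L_d$ exactly. A linear potential drop across the block yields effective resistance $1+\tfrac{2}{d-1}$ per period, with the bridge contributing $1$ and $L_d$ contributing $2/(d-1)$. Following \cite[Section~3.2]{AbGhDiam}, we treat the chain as a weighted path with per-period mass $d+1$ and resistance $R$, which gives $\mu\approx \pi^2/(R\,(d+1)\,k^2)$. We then check that this yields $(d-1)\pi^2/n^2$ after the exact transfer-matrix computation. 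The method of \cite{AbGhDiam} diagonalizes the periodic part by a $2\times2$ transfer matrix, so $\mu$ is the smallest root of an explicit equation of the form $\cos(k\theta(\mu))=$ (boundary terms). The bounded ends only shift the effective length by $O(1)$, giving the stated $O(n^{-3})$ error.

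\textbf{Lower bound.} Let $G$ be $\mu$-minimal with $n\ge N_d$. By \Cref{thm:GMfull}, $G$ is an $L_d$ chain of $k=(n-O(1))/(d+1)$ blocks with end portions of at most $B_d$ vertices. We take the Fiedler vector and restrict the eigen-equation to the periodic part. Because the chain is periodic, the eigenvector there satisfies the transfer-matrix recurrence exactly, so on the middle part it is a discrete sinusoid with phase speed $\theta(\mu)$. The end portions act as boundary conditions specified by bounded matrices, and they have only finitely many isomorphism types, since there are at most $B_d$ vertices. For each such type the secular equation reads $k\theta(\mu)=\pi+O(\theta)$, which gives $\mu=(d-1)\pi^2/n^2+O(n^{-3})$ uniformly over the finitely many end types. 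This yields both the lower bound and, in fact, the exact asymptotic.

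\textbf{Main obstacle.} The delicate point is controlling the boundary phase shifts so that the error is $O(n^{-3})$ and not merely $o(n^{-2})$. The idea is that each bounded end contributes a phase correction that is analytic in $\mu$ near $0$, so it shifts the effective length by $O(1)$. The step that needs care is showing that the end gadget cannot contain a sub-structure with anomalously small local eigenvalue that would decouple the ends. This is excluded because every end portion is a fixed finite connected graph, so its Neumann data are bounded uniformly. The finiteness of the set of end types is what makes the constant in $O_d$ uniform.
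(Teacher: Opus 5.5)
Your reduction matches the paper's. \Cref{thm:GMfull} turns a minimizer into a long $L_d$ chain with two bounded end subgraphs. Your refined trial vector, which splits each period's drop between block and bridge in the ratio $2/\alpha:1$, is exactly the vector of \Cref{lem:comparisonupper}.

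You diverge in the two-sided computation of $\mu$ for such a chain. The paper (\Cref{lem:chainvalue}) stays variational:
\begin{itemize}
\item the edge estimate $|x_u-x_v|\le\mu\sqrt n$ makes every block nearly constant;
\item the block values $u_j=x_{s_j}$ have centered mass $1/D+O(n^{-1})$;
\item the three-cell bound $\mu\ge\frac{\alpha}{D}\sum_j(u_{j+1}-u_j)^2\ge\frac{\alpha}{D}p_m\|\bu-\bar u\1\|^2$ finishes the argument.
\end{itemize}
You instead solve the eigen-equation exactly with a transfer matrix. That route is viable, but as written its decisive step, $k\theta(\mu)=\pi+O(\theta)$, is only asserted. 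It cannot be imported from \cite{AbGhDiam}, which the paper invokes only for the variational path comparison.

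Your stated mechanism is also off. The phase shift caused by an end is of order $\theta\asymp\sqrt\mu$, so it is not analytic in $\mu$; what is analytic is the boundary data. After the twin reduction (twins in a middle clique satisfy $(d+1)(x_u-x_{u'})=\mu(x_u-x_{u'})$), the map $T(\mu)\colon(x_{t_{j-1}},x_{s_j})\mapsto(x_{t_j},x_{s_{j+1}})$ has determinant $1$ and trace $2-D^2\mu/\alpha+O(\mu^2)$. Moreover $T(0)=I+\frac D\alpha\begin{pmatrix}-1&1\\-1&1\end{pmatrix}$ is a nontrivial Jordan block. Its eigenvector $(1,1)^\top$ is the $\mu=0$ limit of both boundary vectors, so the sinusoidal basis degenerates exactly where the boundary data sit.

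One way to close the step is as follows. By \Cref{gm:lem:response}, the ends impose $x_{r_1}=F_1(\mu)x_{s_1}$ and $x_{r_2}=F_2(\mu)x_{t_k}$, with $F_i=1+b_i\mu+O(\mu^2)$. This is valid because $\mu=O(n^{-2})$ lies far below $\lambda_{\min}(A_{H_i})\ge b_i^{-2}$, which also excludes modes localized on an end. Hence every eigenvalue in $(0,C/n^2]$ is a zero of $g(\mu)=(-F_2,1)\,T^k\,(F_1,1)^\top$. Use $T^k=\frac{\sin k\theta}{\sin\theta}T-\frac{\sin(k-1)\theta}{\sin\theta}I$ together with the block-sum identity $(-1,1)T(\mu)(1,1)^\top=-D\mu+O(\mu^2)$. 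This gives, uniformly in $k$,
\[
 \frac{\sin\theta}{\mu}\,g(\mu)=-D\sin k\theta-(b_1+b_2)\bigl(\sin k\theta-\sin(k-1)\theta\bigr)+O(\mu).
\]
So the smallest positive zero has $k\theta=\pi+O(1/k)$, and the relation $2-2\cos\theta=D^2\mu/\alpha+O(\mu^2)$ turns this into the two-sided estimate.

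Completed this way, your route gives finer information than the paper needs: the ends act as an extra length $(b_1+b_2)/D$. The cost is this degenerate perturbation bookkeeping and an enumeration of end types. The paper's argument uses only crude a priori bounds and is uniform over all end subgraphs of order at most $B$ at once.

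One slip in your construction: $K_{d+1}$ with a subdivided edge violates the degree condition for $d\ge4$, since the subdivision vertex has degree at most $3$. In the minimum-degree classes, use $K_{d+1}$ or $K_{d+1+r}$ attached at any vertex, as in \Cref{sec:upper}.
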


We remark that, for fixed $d$, the maximum diameter in each of the classes $\delta=d$, $\delta\ge d$, and $d$-regular graphs is 
$3n/(d+1)+O_d(1)$; see \cite{Caccetta,Erdos} and \cite[Theorems~5.1--5.2]{AbGhDiam}.
 In \cite[Conjecture~5.4]{AbGhDiam}, it is conjectured that a graph with asymptotically minimum algebraic connectivity has asymptotically maximum diameter within its respective family. The following result settles this conjecture for the families $\delta=d$ and $\delta\ge d$, and, when $d$ is odd, for the family of $d$-regular graphs.
 
\begin{theorem}\label{cor:diameter}
Fix $d\ge3$ and choose the class $\delta=d$ or $\delta\ge d$;
alternatively, fix an odd $d\ge3$ and choose the $d$-regular class.
Let $G_n$ be a graph of order $n$ in the chosen class. If
$\mu(G_n)/m_d(n)\to1$, then
\begin{equation}\label{eq:diameterimplication}
 \dm(G_n)=\frac{3n}{d+1}+o(n).
\end{equation}
For $\mu$-minimal graph $G$ of order $n$ in the chosen class,
\begin{equation}\label{eq:exactdiameter}
 \dm(G)=\frac{3n}{d+1}+O_d(1).
\end{equation}
\end{theorem}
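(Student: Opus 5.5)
The second assertion \eqref{eq:exactdiameter} is the easy part, so I would prove it first. By \Cref{thm:GMfull}, a $\mu$-minimal $G$ of order $n\ge N_d$ is path-like. Outside at most $B_d$ vertices it is an $L_d$ chain. Each copy of $L_d=K_{d+1}-e$ has $d+1$ vertices, and a shortest path crosses it from one bridge endpoint to the other in distance $2$. Adding the bridge, each block of $d+1$ vertices contributes exactly $3$ to the diameter. The chain contains $(n-O_d(1))/(d+1)$ copies, so the distance between the two ends of the chain is $3n/(d+1)+O_d(1)$. The end portions have at most $B_d$ vertices and change distances by at most $B_d$. Together with the known upper bound $3n/(d+1)+O_d(1)$ on the diameter in each class, this gives \eqref{eq:exactdiameter}; finitely many small $n$ are absorbed into the constant.

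For \eqref{eq:diameterimplication} the upper bound is again the known maximum-diameter result, so only $\dm(G_n)\ge 3n/(d+1)-o(n)$ is needed. I argue by contradiction. Suppose $\dm(G_n)\le (3/(d+1)-\epsilon)n$ along a subsequence. Take a unit Fiedler vector $\x$ orthogonal to $\1$, with $\x^\top L\x=\mu(G_n)=(1+o(1))(d-1)\pi^2/n^2$ by \Cref{thm:minimum}. Relate the Rayleigh quotient to a one-dimensional problem on BFS layers, taken from an endpoint of a diametral path or, better, ordered by the values of $\x$.

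The idea is a weighted Cheeger/Sobolev inequality. Let $V_0,\dots,V_D$ be the distance layers. Minimum degree $d$ forces local density: any three consecutive layers $V_{i-1}\cup V_i\cup V_{i+1}$ contain at least $d+1$ vertices, because a vertex of $V_i$ has all its neighbours there. This is what makes $D\le 3n/(d+1)+O(1)$. Passing to the continuum, the layer sizes give a mass density $m(t)\ge (d+1)/3$ per unit length, on average over windows. The edges between consecutive layers give a conductance density $c(t)$.

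One then needs the inequality $c\gtrsim$ an appropriate amount, namely that the path-like structure with bridges is extremal. The relevant one-dimensional eigenvalue is $\min\int c\,f'^2/\int m f^2$. For the $L_d$ chain it equals $(d-1)\pi^2/n^2$, which can be checked against the corollary. The bridges of conductance $1$ alternate with blocks of conductance $(d-1)$ over two steps. Effectively the resistance per period is $1+2/(d-1)=(d+1)/(d-1)$ per $d+1$ vertices, so the value is $\pi^2/(nR)$ with total resistance $R=n/(d-1)$.

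A Fiedler vector of a near-minimizer must then realise nearly the maximal total effective resistance $R$ along its level-set ordering, with mass spread nearly uniformly. I would show, as a stability statement, that total resistance $(1-o(1))n/(d-1)$ forces, on all but $o(n)$ vertices, a structure of cut edges of size $1$ separated by $d+1$ vertices. That in turn forces the distance across the graph to be $(1-o(1))3n/(d+1)$.

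The main obstacle is this stability step. It is the quantitative version of the extremal argument behind \Cref{thm:GMfull}. I would try to reuse that theorem's proof: its local-replacement lemmas presumably show that any window of bounded length deviating from the $L_d$ pattern loses a constant factor of resistance. Summing over windows, a loss of $\epsilon n$ in diameter would cost a constant fraction of the resistance, hence raise $\mu$ by a factor $1+c(\epsilon)$. That contradicts $\mu(G_n)/m_d(n)\to1$.
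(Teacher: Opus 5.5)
Your argument for \eqref{eq:exactdiameter} is correct and is the paper's. \Cref{thm:GMfull} leaves $k=(n-O_d(1))/(d+1)$ consecutive $L_d$ blocks, any path across them has at least $3k-1$ edges, and the cited maximum-diameter bound gives the upper estimate.

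The gap is in \eqref{eq:diameterimplication}, which rests entirely on the stability step you leave open, and the route you propose for it fails. The replacement and switching lemmas behind \Cref{thm:GMfull} (e.g.\ \Cref{gm:lem:replacecap}, \Cref{gm:lem:defectmoves}) only show that a modified graph has strictly smaller $\mu$. That contradicts exact minimality but says nothing about a near-minimizer $G_n$. They also use bridge extremality (\Cref{gm:lem:bridgeextrema}), which is proved only for exact minimizers. Your heuristic of a uniform loss per deviating window is also not right. A loss of resistance at position $i$ raises the Rayleigh quotient only in proportion to the squared gradient of the Fiedler vector there, which is of order $\min\{i,n-i\}^2/n^5$. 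Deviations clustered near the ends are therefore cheap. Finally, your last implication fails in the minimum-degree classes. Size-one cuts at spacing $d+1$ may enclose copies of $K_{d+1}$, which are crossed in one edge, so a chain of such gaps has diameter only about $2n/(d+1)$. These gaps must be shown to be rare by a separate argument.

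What is missing is exactly \Cref{thm:stability}, which holds for every graph with $\delta\ge d$ and so covers all three classes. Put $\e_n=\max\{0,n^2\mu(G_n)-(d-1)\pi^2\}$. By \Cref{thm:minimum} and $\mu(G_n)/m_d(n)\to1$, $\e_n\to0$, and \eqref{eq:diamstable} gives $\dm(G_n)\ge 3n/(d+1)-C_d(\e_n+n^{-1})^{1/3}n$ directly. Together with the diameter upper bound, this is the paper's whole proof.

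If you want to prove the stability yourself, the mechanism that works is the following, with $\theta=\e_n+n^{-1}$.
First, along the Fiedler order the excess $\mu-\al\,\y^\top L(P_n)\y$ splits exactly into nonnegative local terms \eqref{eq:excess}, and this excess is $O(\theta/n^2)$.
Second, the total $\sum b_i^2$ over uncovered indices, nonbridge intervals and $K_{d+1}$ gaps is bounded by $C_d$ times this excess plus the variation $\sum(b_{i+1}-b_i)^2$ (\Cref{lem:adjacent}, \Cref{lem:exceptionalindices}, \Cref{lem:completegaps}).
Third, \Cref{lem:pathstable} forces $b_i$ close to the path differences $h_i\ge 4\sqrt2\min\{i,n-i\}/n^{5/2}$. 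This makes the variation term $O(\theta/n^2)$, and it implies that an index set carrying only $O(\theta/n^2)$ of $\sum b_i^2$ has $O(\theta^{1/3}n)$ elements.
The cube root is exactly the price of the vanishing gradient near the ends that your heuristic ignores.
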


We next allow the minimum degree to vary with the order. The following
bound is uniform over the entire minimum-degree class, without a
regularity assumption.

\begin{theorem}\label{thm:uniformminimum}
For every fixed integer $r\ge3$, put
\[
 c_r=\min\left\{\frac{r-1}{r}\pi^2,\,8\right\}.
\]
Then
\begin{equation}\label{eq:uniformminimum}
 \lim_{n\to\infty}
 \min_{\substack{|V(G)|=n\\ \delta(G)\ge r}}
 \frac{n^2\mu(G)}{\delta(G)}=c_r,
\end{equation}
where the minimum is over connected  graphs.
\end{theorem}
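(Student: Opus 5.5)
The equality \eqref{eq:uniformminimum} needs two bounds: a construction achieving each term of $c_r$, and a matching lower bound valid for every connected graph with $\delta(G)\ge r$.

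\emph{Upper bound.} I would exhibit two families.

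First, fix $d=r$ and take the $\mu$-minimal graphs with $\delta=r$. By \Cref{thm:minimum} they satisfy $n^2\mu/\delta=(r-1)\pi^2/r+O_r(1/n)$.

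Second, take the barbell: two copies of $K_{n/2}$ joined by a single edge. Here $\delta=n/2-1$. Its Fiedler vector is nearly constant, equal to $\pm1$, on the two cliques, and a direct computation gives $\mu=(1+o(1))\,4/n$. Hence $n^2\mu/\delta\to 8$.

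More generally, a chain of $k$ cliques of size $s=n/k$ joined by bridges behaves like a discrete path with $k$ masses $s$ and unit conductances. This gives $\mu\approx 4\sin^2(\pi/2k)/s$, so
\[
n^2\mu/\delta\to 4k^2\sin^2\!\left(\frac{\pi}{2k}\right),
\]
which increases from $8$ at $k=2$ to $\pi^2$ as $k\to\infty$. This computation explains both constants in $c_r$, and also tells us what the lower-bound argument must reproduce.

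\emph{Lower bound, bounded minimum degree.} Fix a large constant $D$ and let $G$ have $r\le\delta(G)=d\le D$. For these finitely many values of $d$, \Cref{thm:minimum} applied to the class $\delta=d$ gives
\[
n^2\mu(G)/d\ \ge\ (d-1)\pi^2/d-o(1)\ \ge\ (r-1)\pi^2/r-o(1),
\]
uniformly in $d\le D$. The last inequality holds because $(d-1)/d$ is increasing in $d$.

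\emph{Lower bound, large minimum degree.} It remains to show that $\delta(G)\ge D$ forces $n^2\mu/\delta\ge 8-\varepsilon(D)-o(1)$ with $\varepsilon(D)\to0$. Take a Fiedler vector $\x$ and split $V(G)$ into BFS layers $V_0,\dots,V_t$ from an endpoint of a diameter. The minimum-degree condition gives $|V_{i-1}\cup V_i\cup V_{i+1}|\ge\delta+1$ for every $i$, and it forces every nonempty edge cut between consecutive layers to be either a single edge or large. I would then compare the Rayleigh quotient of $\x$ with that of a weighted path, i.e. a one-dimensional discrete Sturm--Liouville problem, with masses $|V_i|$ and conductances $e(V_i,V_{i+1})$. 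Any variation of $\x$ inside a layer only adds to the energy, and can be charged via Cauchy--Schwarz against the large internal degrees.

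The resulting extremal problem is: minimize $\lambda_2$ of a weighted path of total mass $n$ subject to the window constraint that every three consecutive masses sum to at least $\delta+1$. Grouping into blocks of mass roughly $\delta$ gives $k\le n/\delta$ effective blocks. Combined with a conductance lower bound of $1$ per link, the problem reduces to the computation above, and its minimum is at least $\min_k 4k^2\sin^2(\pi/2k)\cdot(1-\varepsilon)=8(1-\varepsilon)$.

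\emph{Expected main obstacle.} The hard step is the reduction from $\x$ to the quotient path without losing a constant factor. Mohar's bound $\mu\ge 4/(n\,\dm)$ only yields the constant $4/3$, so a crude reduction is not enough. Concretely, I would need to control how much a Fiedler vector can deviate from being constant on each layer, and to handle layers whose connecting cuts are small but larger than one edge. My first attempt would be a local averaging argument: replace $\x$ on each layer by its mean and bound the error term by $\mu$ times a quantity of order $n/\delta$. This is the same kind of block-by-block estimate that underlies \Cref{thm:GMfull}, now carried out with constants uniform in $\delta$.
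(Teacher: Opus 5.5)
\textbf{There is a genuine gap, and it is in the large-degree lower bound, which is where the constant $8$ comes from.} Your upper bound is fine: the barbell is exactly the paper's example for $r\ge6$, and $\mu$-minimal graphs with $\delta=r$ can stand in for the comparison chains. Your bounded-degree lower bound via \Cref{thm:minimum} for finitely many $d\le D$ is also fine.

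The problem goes beyond the reduction you flag as the obstacle: the extremal problem you reduce to does not have minimum $8(1-\varepsilon)$. Consider masses $1,\delta,1$ repeated $k\ge2$ times, with unit conductances on every link. This respects the window constraint $m_{i-1}+m_i+m_{i+1}\ge\delta+1$, including at the two ends. Consecutive heavy layers are then separated by three unit resistors. Hence $\lambda_2\approx4\sin^2(\pi/2k)/(3\delta)$, and $n^2\lambda_2/\delta$ is about one third of your $4k^2\sin^2(\pi/2k)$, i.e.\ between $8/3$ and $\pi^2/3$.

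Grouping three layers into a block of mass about $\delta$ does not make the block rigid. The constant factor you wanted to keep is lost at exactly this step. In an actual graph such profiles are excluded by the degree coupling $e(V_{i-1},V_i)+2e(V_i)+e(V_i,V_{i+1})\ge\delta|V_i|$: a singleton layer following a layer of size $\delta$ must send at least $\delta-1$ edges into it. Your relaxation discards this coupling.

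The reduction itself is also unsupported. BFS layers need not contain any internal edges, so variation of $\x$ inside a layer cannot be charged to ``internal degrees''. Moreover, the claim that every inter-layer cut is a single edge or large fails for two copies of $K_{\delta+1}$ joined by two disjoint edges.

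\textbf{The paper's proof shows which ideas are missing.} First, it splits according to whether $\delta/n\to0$ or $\delta\ge cn$, not at a fixed $D$. For $\delta=o(n)$, bounded or growing, \Cref{lem:lowerbound} gives $\mu\ge(\delta-1)p_{n+2\delta}$ uniformly in $\delta$. This comes from the interpolated path comparison, padded by $\delta$ copies of the endpoint values. It yields $n^2\mu/\delta\ge(1-1/\delta)\pi^2-o(1)\ge(r-1)\pi^2/r-o(1)$. So the growing sublinear regime, which your layered argument would also have to cover, needs nothing new. It also gives your bounded case directly, without the structure theory.

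Second, for $\delta\ge cn$ (\Cref{lem:denseminimum}), the parts are defined by the Fiedler vector rather than by distances:
\begin{itemize}
\item Normalize $\|\y\|^2=n$, so the energy is $O(1)$, and keep only the edges whose differences are at most $n^{-1/4}$.
\item Each vertex loses only $O(\sqrt n)$ edges, so the resulting components have minimum degree at least $cn/2$.
\item Hence there are boundedly many components, each of diameter at most $6/c$ by the Erd\H{o}s--Caccetta bound, and $\y$ oscillates by only $o(1)$ on each. This is what makes replacing values by component averages rigorous.
\item Contract to a spanning tree and apply \Cref{lem:weightedvariance}, whose sharp constant is $8$, with all weights at least $\delta/n-o(1)$. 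This gives $n^2\mu/\delta\ge8-o(1)$.
\end{itemize}
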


Thus $c_3=2\pi^2/3$, $c_4=3\pi^2/4$, $c_5=4\pi^2/5$, and
$c_r=8$ for $r\ge6$. The matching examples exist at every
sufficiently large order in the minimum-degree class. For irregular
graphs, the ratio $\mu(G)/\delta(G)$ in this theorem is not the
random-walk spectral gap. For a $d$-regular graph it is, and the
case $r=3$, together with the elementary cycle calculation for $d=2$,
proves the Aldous--Fill conjecture, \Cref{conj:A-F}.

\begin{corollary}\label{thm:aldousfill}
For every $\varepsilon>0$ there is $N=N(\varepsilon)$ such that every
connected  $d$-regular graph $G$ of order $n\ge N$, where
$2\le d\le n-1$, satisfies
\[
 \frac{\mu(G)}d\ge(1-\varepsilon)\frac{2\pi^2}{3n^2}.
\]
The constant is sharp along even orders, where cubic minimizers
attain asymptotic equality.
\end{corollary}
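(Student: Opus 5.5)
We derive \Cref{thm:aldousfill} from \Cref{thm:uniformminimum} with $r=3$, together with a direct treatment of cycles ($d=2$).

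\textbf{Case $d\ge3$.} A connected $d$-regular graph has $\delta(G)=d\ge3$, so it lies in the class of \Cref{thm:uniformminimum} with $r=3$. There $c_3=\min\{2\pi^2/3,8\}=2\pi^2/3$, since $2\pi^2/3\approx6.58<8$. Thus \eqref{eq:uniformminimum} says that for every $\varepsilon>0$ there is $N_1(\varepsilon)$ such that every connected graph $G$ of order $n\ge N_1$ with $\delta(G)\ge3$ satisfies
\[
 \frac{n^2\mu(G)}{\delta(G)}\ge(1-\varepsilon)\frac{2\pi^2}{3}.
\]
For $d$-regular $G$ we have $\delta(G)=d$, which gives $\mu(G)/d\ge(1-\varepsilon)\frac{2\pi^2}{3n^2}$. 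This holds uniformly in $d$ with $3\le d\le n-1$, because the minimum in \eqref{eq:uniformminimum} runs over all graphs with $\delta\ge3$, regardless of how large $\delta$ is.

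\textbf{Case $d=2$.} Here $G=C_n$ and $\mu(C_n)=2-2\cos(2\pi/n)=\frac{4\pi^2}{n^2}(1+o(1))$, so $\mu/d=\frac{2\pi^2}{n^2}(1+o(1))$. This exceeds $\frac{2\pi^2}{3n^2}$ by a factor of about $3$, so the inequality holds once $n\ge N_2$. Setting $N=\max\{N_1,N_2\}$ completes the bound.

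\textbf{Sharpness.} The ratio $\mu/d$ must approach $\frac{2\pi^2}{3n^2}$ along even $n$. Either of two earlier results gives this:
\begin{itemize}
\item the cited result of Abdi, Ghorbani and Imrich that $\mu$-minimal cubic graphs have $\mu=(1+o(1))\frac{2\pi^2}{n^2}$;
\item \Cref{thm:minimum} for odd $d=3$, which gives minimum $\frac{2\pi^2}{n^2}+O(n^{-3})$ over even $n$.
\end{itemize}
Dividing by $d=3$ yields $\frac{2\pi^2}{3n^2}(1+o(1))$. Equivalently, $\tau=d/\mu\sim\frac{3n^2}{2\pi^2}$, which is \Cref{conj:A-F}.

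\textbf{Main obstacle.} All the substance lies in \Cref{thm:uniformminimum}, which we may assume. The one point to check is that its limit statement gives the bound uniformly in $d$. It does, because the minimum is taken over the whole class $\delta\ge3$ at each order $n$.
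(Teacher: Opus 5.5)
Your proposal is correct and essentially the same as the paper's proof. You apply \Cref{thm:uniformminimum} with $r=3$, which is uniform in $d\ge3$ because the minimum runs over the whole class $\delta\ge3$, treat $d=2$ by the cycle computation, and obtain sharpness from cubic graphs with $\mu\le 2\pi^2/n^2+O(n^{-3})$. The one difference is that for sharpness the paper uses the explicit cubic comparison graphs of \Cref{lem:comparisonupper}, since an upper bound for some cubic graph suffices once the lower bound is known, whereas you invoke the heavier \Cref{thm:minimum} or the cited result of Abdi, Ghorbani and Imrich; either route works.
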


We note that recently an independent proof of the Aldous--Fill conjecture
has also been announced by Zhu~\cite{Zhu}. 

\subsection{Outline of the proofs}\label{sec:proofsketch}

Our proofs build on the methods of
\cite{AbGhMin,AbGhDiam,AbGhQuartic,AbGhIm}. The starting point is a
way to express algebraic connectivity using values assigned to the
vertices. Among all real vectors $\x=(x_v)_{v\in V(G)}$ satisfying
$\sum_v x_v=0$ and $\sum_v x_v^2=1$, the minimum of
\[
 \sum_{uv\in E(G)}(x_u-x_v)^2
\]
is $\mu(G)$. A vector attaining this minimum is called a Fiedler
vector. Thus small algebraic connectivity means that these values
can vary across the graph while the total squared difference along
its edges remains small. This also gives a way to rule out a proposed
minimizer: change the graph and find values for which the same sum
is smaller, subject to the same normalization.

This comparison is central to the earlier work.
In \cite{AbGhIm}, edges are switched while preserving every vertex
degree, and the Fiedler vector is used to compare the graphs before
and after the switch. In \cite{AbGhQuartic}, parts of a quartic graph
are replaced and suitable values are assigned to the new vertices
to rule out blocks that cannot occur in a minimizer.
In \cite{AbGhMin}, related comparisons are used for graphs of minimum
degree at least three. The method of \cite[Section~3.2]{AbGhDiam}
then computes the asymptotic algebraic connectivity of the resulting
chains by comparison with a path. We extend these ideas to every
fixed minimum degree $d\ge3$ and to every fixed odd regular degree.

We first arrange the vertices in increasing order of their Fiedler
values. At each position, we count the edges joining the vertices
before that position to those after it. The degree condition gives
lower bounds on these counts. We use them to compare the displayed
sum with the corresponding sum on a path, after replacing some
short sequences of values by equally spaced values. We also keep
track of the nonnegative terms left over in this comparison.
Graphs made from $L_d$ chains with a bounded number of vertices at
their ends give an upper bound of
$(d-1)\pi^2/n^2+O_d(n^{-3})$. A minimizing graph satisfies this
bound too, so the total of the terms left over must be small.
Together with the known Fiedler vector of a path, this shows that
all but $O_d(n^{2/3})$ vertices lie in copies of $L_d$ connected
along a sequence of bridges. More generally, we bound the number
of remaining vertices in terms of how close $\mu(G)$ is to
$(d-1)\pi^2/n^2$. This is the content of \Cref{thm:stability}.

For an exact minimizer, we next examine the connected portions
between these chains and at their ends. If such a portion has too
many vertices, replacing it by a graph of the same order built
mainly from $L_d$ blocks decreases algebraic connectivity.
The comparison accounts for both the edge sum and the normalization
of the new vector. This bounds the size of each remaining portion
by a constant depending only on $d$. If one of these portions lies
far from both ends, exchanging it with one of its neighbouring
$L_d$ blocks again decreases algebraic connectivity. Finally, edge
changes rule out branching, so the blocks occur in a single linear
order. All changes preserve the required degree condition; in the
regular case, every vertex still has degree $d$.
This proves \Cref{thm:GMfull}.

Once this structure is known, we apply
\cite[Section~3.2]{AbGhDiam}. We choose one Fiedler value from each
$L_d$ block and minimize the edge sum between consecutive chosen
vertices. The resulting expression is a constant multiple of the
corresponding sum on a path. The end portions contribute only a
small error, which we estimate to obtain
$\mu(G)=(d-1)\pi^2/n^2+O_d(n^{-3})$.
The diameter conclusion also follows from the arrangement of the
blocks: crossing an $L_d$ block requires at least two edges, and
reaching the next block requires another bridge. The same count,
using \Cref{thm:stability}, proves the conclusion for graphs whose
algebraic connectivity is asymptotically minimum, as conjectured
in \cite[Conjecture~5.4]{AbGhDiam}.

To prove the Aldous--Fill bound, we must also allow the degree to
grow with $n$. We treat two cases. When the minimum degree $d$
satisfies $d/n\to0$, extending the comparison path by $d$ repeated
endpoint values at each end gives a lower bound valid for varying
$d$. When $d$ is at least a fixed positive proportion of $n$, we
retain the edges whose endpoint Fiedler values differ by at most a
small threshold. For the graphs with small algebraic connectivity
that need to be considered, the retained graph has only a bounded
number of connected parts, and the values within each part are
nearly equal. We replace each part by its average value, weighted
by its number of vertices. An inequality comparing these averages
along edges between the parts gives
$n^2\mu(G)/d\ge8-o(1)$. Combining the two cases proves
\Cref{thm:uniformminimum}. For a regular graph, the identity
$\tau=d/\mu(G)$ then gives the Aldous--Fill bound.

\subsection{Organization}

\Cref{sec:prelim} gives estimates for Fiedler vectors and paths.
\Cref{sec:numerical} proves the local inequalities and constructs
comparison graphs. \Cref{sec:stability} uses these facts to prove
\Cref{thm:stability}. \Cref{sec:GMfull} bounds and locates the
exceptional portions of exact minimizers and excludes branching,
proving \Cref{thm:GMfull} in all the stated classes, with separate
checks of the permitted graph changes. \Cref{sec:chainvalue}
deduces \Cref{thm:minimum} from the chain structure, and
\Cref{sec:diameter} proves \Cref{cor:diameter}.
Finally, \Cref{sec:aldousfill} proves the uniform minimum-degree
bound and its Aldous--Fill corollary.
Except in \Cref{sec:aldousfill}, the degree parameter $d\ge3$ is
fixed. In that section, all finite-order estimates are used in their
exact form when $d$ varies with $n$.


\section{Fiedler vectors and paths}\label{sec:prelim}

The order of a graph $G$ is its number of vertices. We write
$\deg_G(v)$ for the degree of a vertex $v$, $\delta(G)$ for the
minimum degree, and $\dm(G)$ for the diameter, that is, the largest
distance between two vertices.

Let $A$ be the adjacency matrix of $G$ and $\Delta$ the diagonal
matrix of its vertex degrees. The
\emph{Laplacian matrix} is $L(G)=\Delta-A$. Its second smallest
eigenvalue is the \emph{algebraic connectivity}, denoted by
$\mu=\mu(G)$. An eigenvector corresponding to $\mu(G)$ is called a
\emph{Fiedler vector}. If $G$ is $d$-regular, then $L(G)=dI-A$ and
its transition matrix is $A/d$. In this case, $\mu(G)$ is also the
\emph{adjacency spectral gap}, the difference between the two
largest eigenvalues of $A$, whereas the spectral gap of the
transition matrix is $\mu(G)/d$.

A \emph{cut vertex} is a vertex whose deletion disconnects the graph,
and a \emph{bridge} is an edge whose deletion disconnects it.
A \emph{block} is a maximal connected subgraph with no cut vertex;
in particular, each bridge forms a block on two vertices. The
\emph{block-tree} is the bipartite incidence tree whose nodes are
the blocks and the cut vertices, with a block adjacent to each cut
vertex it contains. If $G$ has at least two blocks and its block-tree
is a path, we call $G$ \emph{path-like}. Its two pendant blocks are
called \emph{end blocks}. A connected subgraph consisting of several
blocks near one end is called an \emph{end subgraph}; it need not
be a single block.

Recall that $L_d=K_{d+1}-e$. We call the endpoints of its missing
edge the left and right \emph{singleton vertices}; the remaining
$d-1$ vertices induce $K_{d-1}$. The names ``left'' and ``right''
specify an orientation of the block, not an additional
graph-theoretic condition.

This section collects a few simple facts about Fiedler vectors and paths that will be used repeatedly.

We compare a given vector with the Fiedler vector of a path and show that if their corresponding quadratic forms are close, then the vector itself must be close to the Fiedler vector of the path. We also obtain basic bounds on the size and signs of the components of a Fiedler vector of a graph. These facts will later help us turn a small algebraic connectivity into information about the order and structure of the vertices of the graph.

For a real vector $\x=(x_1,\ldots,x_n)^\top$, the quantity
$\x^\top L(G)\x/\|\x\|^2$ is a {\em Rayleigh quotient}. It is well
known that
\begin{align}
 \mu(G)&=\min_{\x\ne\0,\,\x\perp\1}
       \frac{\x^\top L(G)\x}{\|\x\|^2},\label{eq:Rayleigh}\\
 \x^\top L(G)\x&=\sum_{ij\in E(G)}(x_i-x_j)^2.\label{eq:quadratic}
\end{align}
At a vertex $i$ of degree $d_i$, a Fiedler vector satisfies the
{\em eigen-equation}
\begin{equation}\label{eq:eigenequation}
 \mu x_i=d_ix_i-\sum_{j:\,ij\in E(G)}x_j
        =\sum_{j:\,ij\in E(G)}(x_i-x_j).
\end{equation}

We will also use the sign of the sums over initial segments of an
ordered vector. If $\x$ is nonzero, nondecreasing and has mean zero,
put $f_i=-\sum_{j=1}^i x_j$ for $1\le i<n$. Then
\begin{equation}\label{eq:AFprefix}
 f_i=\frac1n\sum_{j=1}^i\sum_{k=i+1}^n(x_k-x_j)>0
 \qquad(1\le i<n).
\end{equation}
All summands are nonnegative, and equality would force every component
to have the same value.

For $\bar y=n^{-1}\sum_i y_i$, centering gives
\begin{equation}\label{eq:centering}
	\|\y-\bar y\1\|^2
	=\sum_i y_i^2-\frac1n\left(\sum_i y_i\right)^2.
\end{equation}
Since $L(G)\1=\0$, centering does not change the quadratic form.
Therefore, for every nonconstant real vector $\y$,
\begin{equation}\label{eq:centeredRayleigh}
	\mu(G)\le
	\frac{\y^\top L(G)\y}{\|\y-\bar y\1\|^2}.
\end{equation}
Indeed, $\y-\bar y\1$ is nonzero and orthogonal to $\1$, so
\eqref{eq:centeredRayleigh} follows directly from
\eqref{eq:Rayleigh}.
These facts are recalled in \cite[Section~2]{AbGhDiam}. The following
consequence will be used when changing edges. Let $G'$ have the same order
as $G$, let $\x$ be a mean-zero unit Fiedler vector of $G$, and put
$\mu=\mu(G)$. A nonconstant vector $\y$ proves $\mu(G')<\mu$ if
\begin{equation}\label{eq:strictcomparison}
 \y^\top L(G')\y-\mu\|\y\|^2
          +\frac\mu n\left(\sum_i y_i\right)^2<0.
\end{equation}
Indeed, the left side is
$\y^\top L(G')\y-\mu\|\y-\bar y\1\|^2$.
Also, if an edge change leaves the Rayleigh quotient of $\x$ equal
to $\mu$, but $\x$ no longer satisfies the eigen-equations, then
$\mu(G')<\mu$. Equality in the minimum Rayleigh quotient would
otherwise make $\x$ a Fiedler vector of~$G'$.

For $P_n$, the path graph on $n$ vertices, we know that (see
\cite{fiedler1973algebraic})
\[
p_n:=\mu(P_n)=2(1-\cos(\pi/n)),
\]
and by \cite[Theorem~5.6.1]{Spielman}, its increasing unit Fiedler vector has
components
\begin{equation}\label{eq:pathvector}
	v_i=-\sqrt{\frac2n}\cos\frac{(2i-1)\pi}{2n},
	\qquad i=1,\ldots,n.
\end{equation}
Also define
\[
h_i=v_{i+1}-v_i,\qquad i=1,\ldots,n-1.
\]
From \eqref{eq:Rayleigh} and \eqref{eq:centering} it then follows that
\begin{equation}\label{eq:pathineq}
	\y^\top L(P_n)\y=\sum_{i=1}^{n-1}(y_{i+1}-y_i)^2
	\ge p_n\|\y-\bar y\1\|^2.
\end{equation}
We need the following lemma.

\begin{lemma}\label{lem:pathstable}
	Let $n\ge3$ and $\theta\ge0$. Suppose there is a constant $C>0$ such that
	\[
	\left|\|\y-\bar y\1\|^2-1\right|\le C\theta,
	\qquad
	0\le\y^\top L(P_n)\y-p_n\|\y-\bar y\1\|^2
	\le C\theta/n^2.
	\]
	Then there is a constant $C'>0$, depending only on $C$, such that,
	for sufficiently small $\theta$, after changing the sign of $\y$ if
	necessary,
	\begin{equation}\label{eq:pathclose}
		\sum_{i=1}^{n-1}(y_{i+1}-y_i-h_i)^2
		\le C'\theta/n^2.
	\end{equation}
	No sign change is needed when $\y$ is nondecreasing. If
	$A\subseteq\{1,\ldots,n-1\}$ satisfies
	\begin{equation}\label{eq:smallset}
		\sum_{i\in A}(y_{i+1}-y_i)^2\le C\theta/n^2,
	\end{equation}
	then, after increasing $C'$ if necessary,
	\[
	|A|\le C'\theta^{1/3}n.
	\]
\end{lemma}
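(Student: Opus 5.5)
The plan is to expand the centered vector in the eigenbasis of $L(P_n)$ and use the spectral gap of the path. Put $\z=\y-\bar y\1$, so $\z\perp\1$, $\z^\top L(P_n)\z=\y^\top L(P_n)\y$, and $z_{i+1}-z_i=y_{i+1}-y_i$. The Laplacian eigenvalues of $P_n$ are $\lambda_k=2(1-\cos(k\pi/n))$ for $k=0,\ldots,n-1$, with $\lambda_1=p_n$. Since $\z\perp\1$, write $\z=a\bv+\bf w$, where $\bv$ is the unit Fiedler vector from \eqref{eq:pathvector} and ${\bf w}$ is orthogonal to both $\1$ and $\bv$. Then
\[
\z^\top L(P_n)\z-p_n\|\z\|^2={\bf w}^\top L(P_n){\bf w}-p_n\|{\bf w}\|^2\ge(\lambda_2-\lambda_1)\|{\bf w}\|^2 .
\]
The first step is to check the elementary bound $\lambda_2-\lambda_1=2(\cos(\pi/n)-\cos(2\pi/n))\ge c/n^2$ for all $n\ge3$, with an absolute constant $c>0$. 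This gives $\|{\bf w}\|^2\le C\theta/c$. Consequently $a^2=\|\z\|^2-\|{\bf w}\|^2$ satisfies $|a^2-1|=O(\theta)$. After changing the sign of $\y$ if necessary, $a>0$ and $|a-1|=O(\theta)$.

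For \eqref{eq:pathclose}, note that $\sum_i(y_{i+1}-y_i-h_i)^2=(\z-\bv)^\top L(P_n)(\z-\bv)$ and $\z-\bv=(a-1)\bv+{\bf w}$. By the triangle inequality for the seminorm $\x\mapsto(\x^\top L(P_n)\x)^{1/2}$, it suffices to bound two terms. The first is $(a-1)^2p_n=O(\theta^2/n^2)$, using $p_n\le\pi^2/n^2$. The second is ${\bf w}^\top L(P_n){\bf w}=[\z^\top L\z-p_n\|\z\|^2]+p_n\|{\bf w}\|^2\le C\theta/n^2+O(\theta/n^2)$. Together these give \eqref{eq:pathclose} with $C'$ depending only on $C$ (for $\theta\le1$, say).

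Suppose now that $\y$ is nondecreasing. If the sign had to be flipped, then \eqref{eq:pathclose} would hold with $-\y$, i.e.\ $\sum(d_i+h_i)^2\le C'\theta/n^2$ where $d_i=y_{i+1}-y_i\ge0$. Since $h_i>0$, each summand is at least $h_i^2$, so the sum is at least $\sum h_i^2=\bv^\top L\bv=p_n\ge c'/n^2$. For small $\theta$ this is a contradiction, so no flip is needed.

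For the set bound, use $h_i^2\le2d_i^2+2(d_i-h_i)^2$, which together with \eqref{eq:smallset} and \eqref{eq:pathclose} gives $\sum_{i\in A}h_i^2=O(\theta/n^2)$. Explicitly,
\[
h_i=2\sqrt{2/n}\,\sin(i\pi/n)\sin(\pi/(2n)),
\]
so $h_i\ge c\,\min(i,n-i)/n^{5/2}$, using $\sin x\ge 2x/\pi$ on $[0,\pi/2]$. Each value of $\min(i,n-i)$ is taken by at most two indices. Hence $\sum_{i\in A}h_i^2$ is at least what is obtained by packing $A$ against both ends, which gives $\sum_{i\in A}h_i^2\ge c\sum_{j\le|A|/2}j^2/n^5\ge c''|A|^3/n^5$. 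Comparing the two bounds yields $|A|^3\le C\theta n^3$, that is, $|A|\le C'\theta^{1/3}n$.

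The main point needing care is the uniform spectral gap $\lambda_2-\lambda_1\gtrsim n^{-2}$ together with the lower bound on $h_i$ near the ends. Both are explicit trigonometric estimates. Beyond these, the only place smallness of $\theta$ is used is fixing the sign and excluding the flipped case.
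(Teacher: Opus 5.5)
Your proposal is correct and follows essentially the paper's route. You split the centered vector into its $\bv$-component and a part orthogonal to $\1$ and $\bv$, and control the latter by the spectral gap of $P_n$. Where you use the absolute gap $\lambda_2-\lambda_1\ge 12/n^2$ on the unnormalized vector, the paper normalizes and uses the ratio $\widetilde p_n/p_n\ge 3$ to get $(\z-\bv)^\top L(\z-\bv)\le 2(\z^\top L\z-p_n)$. You then bound $|A|$ by the same $\min\{i,n-i\}$ counting with $h_i\ge 4\sqrt2\min\{i,n-i\}/n^{5/2}$.

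The only other difference is the nondecreasing case. The paper shows directly that $\langle\y,\bv\rangle=\frac1n\sum_{i<j}(y_j-y_i)(v_j-v_i)>0$. Your contradiction via $\sum_i(d_i+h_i)^2\ge p_n\ge 4/n^2$ instead uses the smallness of $\theta$ once more, which the statement permits.
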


\begin{proof}
Put $L=L(P_n)$. The next positive eigenvalue after $p_n$ is
$\widetilde p_n=2(1-\cos(2\pi/n))$, so
\begin{equation}\label{eq:pathgap1}
 \frac{\widetilde p_n}{p_n}
 =4\cos^2\frac{\pi}{2n}\ge3\qquad(n\ge3).
\end{equation}
We first prove the following estimate for a mean-zero unit vector $\z$,
after choosing its sign so that $\langle\z,\bv\rangle\ge0$:
\begin{equation}\label{eq:pathunitstable}
 (\z-\bv)^\top L(\z-\bv)
 \le2\bigl(\z^\top L\z-p_n\bigr).
\end{equation}
Write $\z=c\bv+\bu$, where $\bu\perp\1,\bv$ and $c\ge0$.
Then $c^2+\|\bu\|^2=1$. Every eigenvalue contributing to $\bu$
is at least $3p_n$, and $\lambda+p_n\le2(\lambda-p_n)$ for
$\lambda\ge3p_n$. Hence
\[
 \bu^\top L\bu+p_n\|\bu\|^2
 \le2\bigl(\bu^\top L\bu-p_n\|\bu\|^2\bigr).
\]
Since $(1-c)^2\le1-c^2=\|\bu\|^2$, we obtain
\[
 (\z-\bv)^\top L(\z-\bv)
 =p_n(1-c)^2+\bu^\top L\bu
 \le2\bigl(\z^\top L\z-p_n\bigr),
\]
which proves \eqref{eq:pathunitstable}.

Now put $s=\|\y-\bar y\1\|$. We may assume $\theta\le1$ and
$C\theta\le1/2$, so $s>0$. Change the sign of $\y$ if necessary
so that $\langle\y,\bv\rangle\ge0$, and apply
\eqref{eq:pathunitstable} to $\z=(\y-\bar y\1)/s$.
The inequality $(u+v)^2\le2u^2+2v^2$, applied to each path difference,
and $\y-\bar y\1-\bv=s(\z-\bv)+(s-1)\bv$ give
\begin{align*}
 (\y-\bar y\1-\bv)^\top L(\y-\bar y\1-\bv)
 &\le2s^2(\z-\bv)^\top L(\z-\bv)+2p_n(s-1)^2\\
 &\le4\bigl(\y^\top L\y-p_ns^2\bigr)+2p_n(s-1)^2\\
 &\le\frac{(4C+2\pi^2C^2)\theta}{n^2}.
\end{align*}
For the last step we used
$|s-1|=|s^2-1|/(s+1)\le C\theta$ and $p_n\le\pi^2/n^2$.
Adding a constant does not change path differences, so this proves
\eqref{eq:pathclose}. If $\y$ is nondecreasing, then
\[
 \langle\y,\bv\rangle
 =\frac1n\sum_{i<j}(y_j-y_i)(v_j-v_i)>0,
\]
because $\bv$ is strictly increasing and $\y$ is not constant.
Thus no sign change is needed.

It remains to prove the last assertion. From
	\eqref{eq:pathvector},
	\[
	h_i
	=
	2\sqrt{\frac2n}
	\sin\frac{\pi}{2n}
	\sin\frac{\pi i}{n}.
	\]
	Put
	\[
	r_i=\min\{i,n-i\}.
	\]
	Using $\sin t\ge2t/\pi$ on $[0,\pi/2]$, we obtain
	$\sin\frac{\pi}{2n}\ge\frac1n,$ and 	$\sin\frac{\pi i}{n}
	=	\sin\frac{\pi r_i}{n}
	\ge\frac{2r_i}{n}.$
	Consequently
	\begin{equation}\label{eq:hlower}
		h_i\ge
		\frac{4\sqrt2\,r_i}{n^{5/2}}.
	\end{equation}
	
	Let $|A|=k$. Each positive integer occurs at most twice among the
	numbers $r_i$. Hence, if their values on $A$ are arranged increasingly,
	the $j$th one is at least $j/2$. By \eqref{eq:hlower},
	\begin{equation}\label{eq:cubicmass}
		\sum_{i\in A}h_i^2
		\ge \frac{8}{n^5}\sum_{j=1}^k j^2
		\ge \frac{8}{3}\frac{k^3}{n^5}.
	\end{equation}
	On the other hand, \eqref{eq:pathclose} and
	\eqref{eq:smallset} give
	\[
	\sum_{i\in A}h_i^2
	\le
	2\sum_{i\in A}(y_{i+1}-y_i)^2
	+2\sum_{i=1}^{n-1}(y_{i+1}-y_i-h_i)^2
	\le
	\frac{2(C+C')\theta}{n^2}.
	\]
	Together with \eqref{eq:cubicmass}, this gives
	\[
	k^3\le\frac34(C+C')\theta n^3.
	\]
	Thus
	\[
	|A|
	\le
	\left(\frac34(C+C')\right)^{1/3}
	\theta^{1/3}n.
	\]
	Increasing $C'$ once more proves the assertion.
\end{proof}

\begin{lemma}\label{lem:smallentries}
	Let $\x$ be a unit Fiedler vector of a connected graph $G$ of order $n$,
	and write $\mu=\mu(G)$. Then
	\begin{equation}\label{eq:edgeestimate}
		\max_i |x_i|\le \sqrt{(n-1)\mu}.
	\end{equation}
	Consequently, if
	$\mu\le \frac{C}{n^2},$
	then
	$\max_i|x_i|\le \sqrt{\frac{C}{n}},$
	and at least $n/(2C)$ components of $\x$ are positive and at least
	$n/(2C)$ components are negative.
\end{lemma}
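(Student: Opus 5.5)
We first bound the largest component through the eigen-equation and the quadratic form, then count signs using the unit norm and mean zero. Let $\x$ be a unit Fiedler vector, so $\sum_i x_i=0$, $\sum_i x_i^2=1$ and $\sum_{ij\in E(G)}(x_i-x_j)^2=\mu$.

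For \eqref{eq:edgeestimate}, fix a vertex $i$ with $|x_i|$ maximal. Since $\sum_j x_j=0$, some vertex $k$ has $x_k$ of the opposite sign (or zero), so $|x_i-x_k|\ge|x_i|$. Because $G$ is connected, there is a path $i=u_0,u_1,\ldots,u_\ell=k$ with $\ell\le n-1$ edges. By Cauchy--Schwarz,
\[
 x_i^2\le (x_i-x_k)^2=\Bigl(\sum_{t=1}^{\ell}(x_{u_{t-1}}-x_{u_t})\Bigr)^2
 \le \ell\sum_{t=1}^{\ell}(x_{u_{t-1}}-x_{u_t})^2\le (n-1)\mu .
\]
The last inequality holds because the path edges are distinct edges of $G$, so their squared differences sum to at most \eqref{eq:quadratic} evaluated at $\x$, which equals $\mu$. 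This gives $\max_i|x_i|\le\sqrt{(n-1)\mu}$. If $\mu\le C/n^2$, it follows at once that $\max_i|x_i|\le\sqrt{C(n-1)/n^2}\le\sqrt{C/n}$.

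For the sign count, let $P$ and $N$ be the sets of vertices with positive and negative components. Put $S=\sum_{i\in P}x_i=\sum_{i\in N}|x_i|$. Since $\sum_i|x_i|\cdot\max_j|x_j|\ge\sum_i x_i^2=1$, we get
\[
 2S=\sum_i|x_i|\ge\sqrt{n/C}.
\]
On the other hand, $S\le|P|\sqrt{C/n}$. Hence $|P|\ge\frac{\sqrt{n/C}}{2\sqrt{C/n}}=\frac{n}{2C}$, and the same argument applied to $N$ gives $|N|\ge n/(2C)$.

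Every step is elementary. The only point needing care is to use the opposite-sign vertex, which the mean-zero condition supplies, so that $|x_i-x_k|\ge|x_i|$ holds.
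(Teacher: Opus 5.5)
Your proof is correct and follows essentially the same route as the paper. It takes a path of at most $n-1$ edges to a vertex of opposite (or zero) sign and applies Cauchy--Schwarz to get the bound on $\max_i|x_i|$. It then combines $1=\sum_i x_i^2\le M\sum_i|x_i|$ with the mean-zero condition to count the positive and negative components.
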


\begin{proof}
	The first estimate is a quantitative form of the path argument used in
	\cite[Lemma~2.2]{AbGhDiam}. Since $\x\perp\1$, for every $i$ with
	$x_i>0$ there is a vertex $j$ with $x_j\le0$. Let
	\[
	v_i=v_0,v_1,\ldots,v_\ell=v_j
	\]
	be a shortest path from $v_i$ to $v_j$. Then $\ell\le n-1$, and by
	Cauchy--Schwarz,
	\[
	x_i^2\le (x_i-x_j)^2
	=\left(\sum_{r=0}^{\ell-1}
	(x_{v_r}-x_{v_{r+1}})\right)^2
	\le \ell\sum_{r=0}^{\ell-1}
	(x_{v_r}-x_{v_{r+1}})^2.
	\]
	Since $\x$ is a unit Fiedler vector,
	\[
	\sum_{uv\in E(G)}(x_u-x_v)^2=\mu.
	\]
	Therefore
	$x_i^2\le (n-1)\mu.$
	The same argument, with the signs reversed, applies when $x_i<0$.
	This proves \eqref{eq:edgeestimate}. 
	
	Now suppose that $\mu\le C/n^2$, and put
	$M=\max_i|x_i|.$
	By \eqref{eq:edgeestimate},
	$M^2\le (n-1)\frac{C}{n^2}\le\frac{C}{n}.$
	Since $\|\x\|=1$,
	\[
	1=\sum_i x_i^2\le M\sum_i|x_i|.
	\]
	Also $\sum_i x_i=0$, and hence
	\[
	\sum_{x_i>0}x_i
	=-\sum_{x_i<0}x_i
	=\frac12\sum_i|x_i|
	\ge\frac1{2M}.
	\]
	Every positive component is at most $M$, so the number of positive
	components is at least
	$\frac{1}{2M^2}\ge\frac{n}{2C}.$
	The same argument gives at least $n/(2C)$ negative components.
\end{proof}

\begin{lemma}\label{lem:positive}
	If $Q$ is a real symmetric positive definite matrix whose off-diagonal
	entries are nonpositive, then $Q^{-1}\1>\0$ componentwise.
\end{lemma}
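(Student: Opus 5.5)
We will prove Lemma~\ref{lem:positive} by writing $Q$ as a scaled perturbation of the identity by a nonnegative matrix and expanding the inverse in a Neumann series. Choose $s>0$ at least the largest diagonal entry of $Q$; then $N:=sI-Q$ is entrywise nonnegative, since its off-diagonal entries are $-Q_{ij}\ge0$ and its diagonal entries are $s-Q_{ii}\ge0$. It is also symmetric, and its eigenvalues are $s-\lambda$ for the eigenvalues $\lambda$ of $Q$. Because $Q$ is positive definite, every such eigenvalue is $<s$. For the lower side, the largest eigenvalue of $Q$ is at most $\max_i\sum_j|Q_{ij}|$. Taking $s$ strictly larger than this quantity makes every eigenvalue of $N$ lie in $(0,s)$. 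Hence the spectral radius of $N/s$ is below $1$. Then
\[
 Q^{-1}=\frac1s\Bigl(I-\frac Ns\Bigr)^{-1}=\frac1s\sum_{k\ge0}\Bigl(\frac Ns\Bigr)^k,
\]
and since every term is entrywise nonnegative, $Q^{-1}$ is entrywise nonnegative. So $Q^{-1}\1\ge\0$.

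The remaining step, which is the only subtle one, is strictness. Here we use invertibility rather than irreducibility, so no connectivity hypothesis is needed. The $k=0$ term of the series contributes $\frac1s I$, so $Q^{-1}\ge \frac1s I$ entrywise. Every entry of $Q^{-1}$ is nonnegative, and each diagonal entry is at least $1/s>0$. Therefore each row sum of $Q^{-1}$ is at least $1/s>0$, which gives $Q^{-1}\1\ge\frac1s\1>\0$ componentwise.

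As an alternative check, avoiding series, we could argue by a minimum principle. Let $\y=Q^{-1}\1$ and suppose $y_i=\min_j y_j\le0$. Then $1=(Q\y)_i=Q_{ii}y_i+\sum_{j\ne i}Q_{ij}y_j$. Using $Q_{ij}\le0$ and $y_j\ge y_i$, this gives $1\le y_i\sum_jQ_{ij}$. This route requires a sign for the row sums, which is not given, so the Neumann-series argument above is the one we will use. The main point to verify carefully is the choice of $s$ guaranteeing the spectral radius bound, and that is immediate from the symmetry of $N$.
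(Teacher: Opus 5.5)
Your proof is correct, but it takes a different route from the paper. You write $Q=s(I-N/s)$ with $N=sI-Q$ entrywise nonnegative. Symmetry and the row-sum bound $\lambda_{\max}(Q)\le\max_i\sum_j|Q_{ij}|$ put the spectrum of $N/s$ in $(0,1)$. The Neumann series then gives $Q^{-1}\ge s^{-1}I$ entrywise, and strictness follows at once from the diagonal.

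The paper argues variationally instead. $Q^{-1}\1$ is the unique minimizer of the strictly convex function $f(\y)=\frac12\y^\top Q\y-\1^\top\y$. Because $q_{ij}\le0$ for $i\ne j$, replacing $\y$ by $|\y|$ does not increase the quadratic term. It strictly decreases $-\1^\top\y$ when some component is negative, so the minimizer is nonnegative. A zero component $i$ would then give $(Q\y_0)_i=\sum_{j\ne i}q_{ij}(\y_0)_j\le0\ne1$.

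This last step is exactly the minimum principle you discarded. It works in the paper because it is applied only after nonnegativity is known, and at a component equal to zero. The diagonal term therefore drops out, and no sign of the row sums is needed.

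Your approach yields more: entrywise nonnegativity of the whole inverse (the standard M-matrix fact), and the explicit bound $Q^{-1}\1\ge s^{-1}\1$ for any $s>\max_i\sum_j|Q_{ij}|$. The cost is a spectral estimate and a series. The paper's argument uses only convexity and the sign pattern, which fits the energy-minimization style used throughout the paper.
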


\begin{proof}
	Consider the function
	\[
	f(\y)=\frac12\y^\top Q\y-\1^\top\y,
	\qquad \y\in\mathbb R^m.
	\]
	Since $Q$ is positive definite, $f$ is strictly convex and therefore
	has a unique minimizer. Its gradient is $Q\y-\1$, so the minimizer is
	$\y_0=Q^{-1}\1.$
	
	We first show that $\y_0\ge\0$. For any $\y\in\mathbb R^m$, let
	$|\y|$ denote the vector obtained by replacing every component of
	$\y$ by its absolute value. Since $q_{ij}\le0$ for $i\ne j$,
	replacing $\y$ by $|\y|$ cannot increase the quadratic term
	$\y^\top Q\y$. Also,
	$-\1^\top|\y|\le-\1^\top\y$, with strict inequality if some component
	of $\y$ is negative. Hence $f(|\y|)\le f(\y)$, with strict inequality
	if $\y$ has a negative component. Since $\y_0$ is the unique
	minimizer, it follows that $\y_0\ge\0$.
	
	It remains to show that every component is positive. Suppose
	$(\y_0)_i=0$ for some $i$. Then, since $(\y_0)_j\ge0$ and
	$q_{ij}\le0$ for $j\ne i$,
	\[
	(Q\y_0)_i
	=q_{ii}(\y_0)_i+\sum_{j\ne i}q_{ij}(\y_0)_j
	=\sum_{j\ne i}q_{ij}(\y_0)_j
	\le0.
	\]
	But $Q\y_0=\1$, so $(Q\y_0)_i=1$, a contradiction. Therefore
	$\y_0>\0$, and hence $Q^{-1}\1>\0$.
\end{proof}

\section{Local estimates and comparison graphs}\label{sec:numerical}
In this section, we compare ordered vectors with vectors on a path
and construct graphs with small algebraic connectivity. These facts
will be used to determine the structure of exact minimizers.

Throughout this section, put $D=d+1$ and $\al=d-1$. We compare a
vector on $G$ with a vector on $P_n$, obtained by replacing certain
consecutive components by equally spaced values. We keep track of the
difference between the two quadratic forms, since it is needed for
\Cref{thm:stability}.

\subsection{Edges crossing consecutive cuts}
We put the vertices in order and count the edges crossing each cut.
These counts will help us estimate algebraic connectivity.

Let $G$ be a graph with $V(G)=\{1,\ldots,n\}$.
Let $\x$ be any real vector on $V(G)$. Label the vertices so that
$x_1\le\cdots\le x_n$, and put
\[
 a_i=x_{i+1}-x_i\ge0,\qquad 1\le i\le n-1.
\]
Let $q_i$ be the number of edges joining $\{1,\ldots,i\}$ to $\{i+1,\ldots,n\}$. We call this the cut at position $i$, and set $q_0=q_n=0$.
Let $K_{ij}$ be the number of edges crossing both the cut at $i$ and the
cut at $j$. 
Equivalently, $K_{ij}$ counts the edges $uv$, $u<v$, for which
\[
u\le\min\{i,j\},\qquad v>\max\{i,j\}.
\]

In particular, $K_{ij}=K_{ji}$ and $K_{ii}=q_i$.
For an edge $uv$ with $u<v$, we have $x_v-x_u=\sum_{i=u}^{v-1}a_i$.
Hence
\[
(x_v-x_u)^2
=\sum_{i=u}^{v-1}\sum_{j=u}^{v-1}a_i a_j.
\]
Now sum this identity over all edges $uv\in E(G)$. For fixed $i,j$,
the term $a_i a_j$ occurs once for every edge that crosses both cuts
$i$ and $j$, and there are exactly $K_{ij}$ such edges. Therefore
\begin{equation}\label{eq:full}
	\x^\top L(G)\x
	=\sum_{uv\in E(G)}(x_u-x_v)^2
	=\sum_{i,j=1}^{n-1}K_{ij}a_i a_j.
\end{equation}
Since $K_{ij}\ge0$ and $a_i\ge0$, every term in the last sum is
nonnegative. We may therefore keep only selected terms whenever we
want a lower bound.

\begin{lemma}\label{lem:counts}
If $\delta(G)\ge d$, then
\begin{equation}\label{eq:cutcount}
 q_a+q_{a+t}\ge t(D-t)
 \qquad(0\le a<a+t\le n,\ 1\le t\le d).
\end{equation}
If $q_a=k$, then, for $1\le s\le t\le d$ and $a+t\le n-1$,
\begin{equation}\label{eq:overlap}
 K_{a+s,a+t}\ge s(D-t)-k.
\end{equation}
For every  graph, including the end cuts $1\le i\le n-1$,
with a term outside the range interpreted as zero,
\begin{equation}\label{eq:neighborcount}
 K_{i-1,i}+K_{i,i+1}\ge q_i-1.
\end{equation}
\end{lemma}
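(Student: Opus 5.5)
The three inequalities are all obtained by double counting the edges incident with a short run of consecutive positions; all that matters is where each endpoint of an edge lies relative to the cuts. Recall that an edge $uv$ with $u<v$ crosses the cut at $j$ exactly when $u\le j<v$.

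For \eqref{eq:cutcount}, I will look at the window $S=\{a+1,\ldots,a+t\}$ of $t$ consecutive vertices. A vertex of $S$ has at most $t-1$ neighbours inside $S$. Since $\delta(G)\ge d$, it therefore has at least $d-(t-1)=D-t$ neighbours outside $S$. This gives at least $t(D-t)$ edges leaving $S$, counted with multiplicity one each, since each such edge has exactly one endpoint in $S$. Every such edge ends either in $\{1,\ldots,a\}$, and then it crosses the cut at $a$, or in $\{a+t+1,\ldots,n\}$, and then it crosses the cut at $a+t$. Hence each is counted in $q_a$ or in $q_{a+t}$, and $q_a+q_{a+t}\ge t(D-t)$. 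The degenerate cases $a=0$ or $a+t=n$ are covered because $q_0=q_n=0$ and the corresponding side is empty.

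For \eqref{eq:overlap}, I will keep the window $\{a+1,\ldots,a+t\}$ but count only edges leaving its first $s$ vertices $S'=\{a+1,\ldots,a+s\}$. As before, each vertex of $S'$ has at least $D-t$ neighbours outside $\{a+1,\ldots,a+t\}$, so there are at least $s(D-t)$ such incidences. An edge from $u\in S'$ to $v>a+t$ satisfies $u\le a+s$ and $v>a+t$, so it is counted in $K_{a+s,a+t}$; here the hypothesis $a+t\le n-1$ makes that cut index valid. An edge from $S'$ to a vertex $\le a$ crosses the cut at $a$, and there are at most $q_a=k$ of these. Subtracting gives $K_{a+s,a+t}\ge s(D-t)-k$.

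For \eqref{eq:neighborcount}, I will take an edge $uv$, $u<v$, crossing the cut at $i$, so $u\le i<v$. If $u\le i-1$, it also crosses the cut at $i-1$ and is counted in $K_{i-1,i}$. If $v\ge i+2$, it also crosses the cut at $i+1$ and is counted in $K_{i,i+1}$. The only edge that escapes both counts is the one with $u=i$ and $v=i+1$, and there is at most one such edge. So $K_{i-1,i}+K_{i,i+1}\ge q_i-1$. At the end cuts $i=1$ and $i=n-1$, the conditions $u\le0$ and $v\ge n+1$ are impossible, which is consistent with reading the out-of-range term as zero. There is no real obstacle here; the only care needed is the bookkeeping at the boundary indices.
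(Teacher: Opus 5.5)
Your proof is correct and follows essentially the same route as the paper. Both arguments count the edges leaving the window $\{a+1,\ldots,a+t\}$ (or its first $s$ vertices) using the degree bound. Both then subtract at most $k$ edges going back across the cut at $a$, and note that only the edge $i(i+1)$ can cross the cut at $i$ without crossing a neighbouring cut. The only cosmetic difference is that you bound the outside neighbours vertex by vertex ($d-(t-1)$ each), whereas the paper subtracts $s(s-1)+s(t-s)$ from the total degree sum; this is the same count.
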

\begin{proof}
Consider the $t$ vertices in positions $a+1,\ldots,a+t$. Their degrees
have sum at least $dt$. Edges between these vertices account for at most
$t(t-1)$ in that sum. At least $dt-t(t-1)=t(D-t)$ edges therefore leave
the set. Every one of them crosses the cut at $a$ or the cut at $a+t$,
which proves \eqref{eq:cutcount}.

For \eqref{eq:overlap}, consider only the first $s$ of these vertices.
Their degrees have sum at least $ds$. Edges inside the set account for
at most $s(s-1)$, edges to the next $t-s$ vertices account for at most
$s(t-s)$, and edges to positions at most $a$ account for at most $k$.
There are consequently at least
\[
 ds-s(s-1)-s(t-s)-k=s(D-t)-k
\]
edges from these $s$ vertices to positions greater than $a+t$. Each of
these edges crosses both indicated cuts.

Finally, an edge crossing the cut at $i$ must also cross one of its two
neighboring cuts, unless the edge is $i(i+1)$. Since the graph is simple,
there is at most one exception. This proves \eqref{eq:neighborcount}.
\end{proof}

Call a cut {\em small} if $q_i\le d-2$. By \eqref{eq:cutcount}, two
small cuts cannot be at distance $2,\ldots,d-1$, because their sizes
would have sum at most $2(d-2)$, whereas
$t(D-t)\ge2(d-1)$ in that range. Thus small cuts occur singly or in
adjacent pairs. The first and last $d$ cuts are not small: setting
$a=0$ in \eqref{eq:cutcount} gives
$q_i\ge i(D-i)\ge d$ for $1\le i\le d$, and the other end is the same.

At the first cut $a$ of each group of small cuts, select the interval
of difference indices
\begin{equation}\label{eq:windows}
 B_a=\{a,\ldots,a+r-1\},\qquad
 r=\begin{cases}D,&q_a=1,\\ d,&2\le q_a\le d-2.\end{cases}
\end{equation}
These intervals are disjoint and cover all small cuts. Indeed, group
beginnings are at least $d$ apart. If $q_a=1$, \eqref{eq:cutcount}
also excludes a small cut at distance $d$ and an adjacent small cut:
the two sizes would sum to at most $d-1$. The endpoint cut bounds
ensure that every interval lies in $\{1,\ldots,n-1\}$.

\subsection{Inequalities on a selected interval}
Now, we estimate the contribution from a short interval and show why intervals
starting at a bridge are special.

For an interval $B=B_a$ of length $r$, write
\begin{align*}
	 w_s&=a_{a+s}\quad(0\le s<r),\\
	\Delta_B&=x_{a+r}-x_a=\sum_{s=0}^{r-1}w_s,\\
	 Q_B&=\sum_{i,j\in B}K_{ij}a_i a_j.
\end{align*}
Here $Q_B$ retains the terms of \eqref{eq:full} with both indices in $B$;
it is not the quadratic form of the induced subgraph.

For positive numbers $c_1,\ldots,c_m$ and real numbers
$g_1,\ldots,g_m$, put $R=\sum_jc_j^{-1}$ and $\Delta=\sum_jg_j$.
Expanding the squares gives the exact identity
\begin{equation}\label{eq:weighted-squares}
 \sum_{j=1}^m c_jg_j^2-\frac{\Delta^2}{R}
 =\sum_{j=1}^m c_j\left(g_j-\frac{\Delta}{Rc_j}\right)^2.
\end{equation}
Thus $\sum_jc_jg_j^2\ge\Delta^2/R$, with equality precisely when
$g_j=\Delta/(Rc_j)$ for every $j$. This is the weighted
Cauchy--Schwarz inequality, with its nonnegative remainder retained.

\begin{lemma}\label{lem:local}
Every interval in \eqref{eq:windows} satisfies
\begin{equation}\label{eq:localbase}
 Q_B\ge\al\frac{\Delta_B^2}{|B|}.
\end{equation}
There is a constant $\kappa_d>0$ such that, if $q_a\ge2$, then
\begin{equation}\label{eq:localstrict}
 Q_B\ge(\al+\kappa_d)\frac{\Delta_B^2}{d}.
\end{equation}
If $q_a=1$, put $t_B=\Delta_B/D$ and
$R_B=Q_B-\al D t_B^2$. Then
\begin{equation}\label{eq:bridgeends}
 (a_a-\al t_B)^2+(a_{a+d}-t_B)^2\le2R_B.
\end{equation}
\end{lemma}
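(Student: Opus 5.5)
The plan is to bound $Q_B$ from below by a quadratic form in the $r$ differences $w_0,\dots,w_{r-1}$ whose coefficients depend only on $d$, $r$ and $k=q_a$, and then minimize that form under the constraint $\sum_s w_s=\Delta_B$ with all $w_s\ge0$. Since $K_{ij}\ge0$ and $a_i\ge0$, any entrywise lower bounds on the counts $K_{a+s,a+t}$ may be substituted into $Q_B$. \Cref{lem:counts} supplies such bounds:
\begin{itemize}
\item \eqref{eq:cutcount} with base cut $a$ gives the diagonal bounds $q_{a+s}\ge s(D-s)-k$;
\item \eqref{eq:overlap} gives the off-diagonal bounds $K_{a+s,a+t}\ge s(D-t)-k$ for $1\le s\le t\le d$;
\item \eqref{eq:neighborcount} gives bounds for adjacent pairs.
\end{itemize}
Write $M^{(k)}$ for the symmetric nonnegative $r\times r$ matrix of these bounds, with negative entries replaced by $0$. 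Then $Q_B\ge \mathbf w^\top M^{(k)}\mathbf w$. Because $\mathbf w\ge\0$ and $M^{(k)}$ is entrywise nonnegative, the question is a finite copositive minimization that depends only on $d$ and $k$.

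For \eqref{eq:localbase} with $q_a=1$ ($r=D$), I would follow the $L_d$ block profile. In that profile $w_0$ is the bridge, carrying weight $q_a=1$. The vertex at $a+1$ is a singleton, so $w_1$ crosses a cut of size at least $d-1=\al$. The inner clique differences $w_2,\dots,w_{d-1}$ cross cuts of size at least $2(d-1)-1$. The last difference $w_d$ again crosses a cut of size at least $\al$, using \eqref{eq:cutcount} with $t=d$ from the cut at $a+d$ back to $a$.

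The key step is to show that the block $\{1,\dots,d\}$ of $M^{(1)}$ satisfies
\[
\textstyle\sum_{1\le s,t\le d}M_{st}w_sw_t\ge \tfrac{\al}{2}\Bigl(\sum_{s\ge1}w_s\Bigr)^2,
\]
that is, it behaves like two weight-$\al$ differences in series. This uses the off-diagonal overlaps: any mass placed on interior $w_s$ is penalized at least as much as moving it to $w_1$ or $w_d$. Combining this with the term $1\cdot w_0^2$ through \eqref{eq:weighted-squares} (harmonic sum $1+2/\al=D/\al$) gives $Q_B\ge \al\Delta_B^2/D$. The nonnegative remainder is $(w_0-\al t_B)^2$ plus the remainder of the inner inequality. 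Applying \eqref{eq:weighted-squares} a second time, to the split $w_1+\cdots+w_{d-1}$ versus $w_d$ with weights $\al$ and $\al$, bounds $(w_d-t_B)^2$ by the remainder as well. Together the two remainders give \eqref{eq:bridgeends} with the factor $2$. I would check the interior claim by writing $M$ minus the rank-one target as a sum of terms $K(w_s+\dots+w_t)^2$-type positive pieces, mimicking the edge count of $L_d$.

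For $2\le k\le d-2$ ($r=d$), the same substitution gives $Q_B\ge\min\{\mathbf w^\top M^{(k)}\mathbf w:\mathbf w\ge\0,\ \sum w_s=\Delta_B\}$. This minimum is $\lambda_{d,k}\Delta_B^2$ by homogeneity, with $\lambda_{d,k}$ attained on the compact simplex. I would show $\lambda_{d,k}>\al/d$ for each of the finitely many $k$ and set $\kappa_d=d\min_k\lambda_{d,k}-\al$. This also gives \eqref{eq:localbase} for these intervals. The natural test vector is equal spacing, which gives $\mathbf 1^\top M\mathbf 1/d^2$, and the entries $s(D-t)-k$ summed over the window far exceed $\al d$ when $k\le d-2$. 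The real work is ruling out concentrated $\mathbf w$. Placing all mass on one $w_s$ gives $q_{a+s}\ge s(D-s)-k$, which is at least $\al$ but equals $\al$ only at the extremes. So I need the off-diagonal entries to penalize a two-point split $w_0,w_{d-1}$. Here the fact that no small cut lies at distance $2,\dots,d-1$, and the bound $q_a+q_{a+d}\ge d$, are what push the value strictly above $\al/d$.

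The main obstacle is the exact copositive inequality for the interior block in the bridge case, with the remainder kept in a usable form. I would first try an explicit sum-of-squares decomposition guided by the extremal $L_d$ profile. If that fails, I would fall back on a KKT argument: at a minimizer the active coordinates have equal partial derivatives, which reduces the problem to checking a few support patterns.
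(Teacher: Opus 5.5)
There is a genuine gap: the bridge case is set up correctly, but the mechanism you give for \eqref{eq:bridgeends} rests on a false inequality, and the case $q_a\ge2$ has no argument.

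\textbf{Bridge case.} Your plan for \eqref{eq:localbase} matches the paper. You write $Q_B\ge w_0^2+\frac{\alpha}{2}Z^2$ with $Z=w_1+\cdots+w_d$, then apply \eqref{eq:weighted-squares} with harmonic sum $1+2/\alpha=D/\alpha$. The interior inequality you leave open is closed in the paper along the lines you guess. The number $s(D-t)-1$ counts the edges of $K_D-e$ crossing cuts $s\le t$, where the missing edge joins positions $1$ and $D$. Hence the interior block equals $\mathbf v^\top L(K_D)\mathbf v-Z^2$, where $v_1=0$ and $v_{j+1}-v_j=w_j$. With $u_j=v_j-Z/2$,
\[
\mathbf v^\top L(K_D)\mathbf v=\frac D2Z^2+D\sum_ju_j^2-\Bigl(\sum_ju_j\Bigr)^2\ge\frac D2Z^2+2\sum_ju_j^2 .
\]
Your step for \eqref{eq:bridgeends} fails, however. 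Applying \eqref{eq:weighted-squares} to $w_1+\cdots+w_{d-1}$ against $w_d$ with weights $\alpha,\alpha$ requires the interior block to dominate $\alpha(w_1+\cdots+w_{d-1})^2+\alpha w_d^2$. It does not. For $d=3$ the block is $2w_1^2+3w_2^2+2w_3^2+2w_1w_2+2w_2w_3$. At $(w_1,w_2,w_3)=(1,1,0)$ it equals $7$, while your proposed minorant equals $8$. The ``two weight-$\alpha$ differences in series'' picture holds only when the split is taken at the average of the clique values, not at $v_{D-1}$. What controls $w_d$ is the clique remainder. First, $w_d=Z/2-u_{D-1}$. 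Next, with $e_0=w_0-\alpha t_B$ one has $Z=2t_B-e_0$, so $w_d-t_B=-u_{D-1}-e_0/2$. Finally, $R_B\ge\frac D2e_0^2+2\sum_ju_j^2$, and together these give \eqref{eq:bridgeends} with the factor $2$.

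\textbf{Case $q_a\ge2$.} Your proposal contains no argument here. Since $\lambda_{d,k}$ depends on $d$, ``finitely many $k$'' is a finite check only for one $d$ at a time. The lemma is asserted for every $d\ge4$, so you need an argument uniform in $d$. Your heuristics also target the wrong configurations:
\begin{itemize}
\item Concentrating the mass on one $w_s$ is harmless, since every diagonal bound ($k$, $d-k$, and $s(D-s)-k\ge d$ for $2\le s\le d-1$) is at least $2>\alpha/d$.
\item $q_{a+d}$ never enters $Q_B$, because $a+d\notin B_a$ when $|B_a|=d$.
\end{itemize}
The missing idea is the paper's relaxation. Keep $kw_0^2+(d-k)w_1^2$. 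Bound the block on $w_2,\dots,w_{d-1}$ by the $K_D$ comparison with $v_1=v_2=0$ and $v_{D-1}=v_D=Z$, where now $Z=w_2+\cdots+w_{d-1}$; its minimum is $DZ^2$. This gives $Q_B\ge kw_0^2+(d-k)w_1^2+(D-k)Z^2$. Weighted Cauchy--Schwarz and convexity of $1/k+1/(d-k)+1/(D-k)$ on $[2,d-2]$ then give the constant $S_d=5/6+1/(d-2)<d/(d-1)$ for $d\ge5$. This relaxation is exactly tight at $d=4$, since $\frac12+\frac12+\frac13=\frac43=\frac{d}{d-1}$. There the strict gain must come from the cross terms $2w_1w_2+4w_2w_3$, via an explicit sum of squares. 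Any uniform argument has to treat this case separately.
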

\begin{proof}
	We consider separately the intervals beginning at a bridge and the other
	intervals.
	
	\emph{Case 1: $q_a=1$.}
	Here $r=D$. Put $Z=w_1+\cdots+w_d$, and define $D$ numbers
	$v_1,\ldots,v_D$ by $v_1=0$ and
	$v_{j+1}-v_j=w_j$ for $1\le j\le d$. Thus $v_D=Z$.
	The number of edges of $K_D$ crossing both cuts $s\le t$ is
	$s(D-t)$. Hence \eqref{eq:overlap}, summed against $w_sw_t$, gives
	\begin{equation}\label{eq:cliquecomparison}
		Q_B\ge w_0^2+\bv^\top L(K_D)\bv-Z^2.
	\end{equation}
	Here the subtracted $1$ contributes
	$\sum_{s,t=1}^d w_sw_t=Z^2$, while all terms involving $w_0$ except
	$w_0^2$ have been omitted and are nonnegative.
	
	For $2\le j\le D-1$, put $u_j=v_j-Z/2$. Using
	$\bv^\top L(K_D)\bv
	=D\sum_jv_j^2-(\sum_jv_j)^2$, we obtain
	\begin{align}
		\bv^\top L(K_D)\bv
		&=\frac D2Z^2+D\sum_{j=2}^{D-1}u_j^2
		-\left(\sum_{j=2}^{D-1}u_j\right)^2 \notag\\
		&\ge \frac D2Z^2+2\sum_{j=2}^{D-1}u_j^2.
		\label{eq:cliquesquares}
	\end{align}
	The last inequality follows from Cauchy--Schwarz. In particular,
	\eqref{eq:cliquecomparison} gives
	$Q_B\ge w_0^2+\al Z^2/2$, and weighted Cauchy--Schwarz yields
	\[	Q_B\ge \frac{\al}{D}(w_0+Z)^2.	\]
	This proves \eqref{eq:localbase}.
	
	For the stronger estimate, retain the squares in
	\eqref{eq:cliquesquares} and put $e_0=w_0-\al t_B$.
	Apply \eqref{eq:weighted-squares} with weights $1,\al/2$
	and differences $w_0,Z$. Since $w_0+Z=Dt_B$, this gives
	\[
	R_B\ge\frac D2e_0^2+2\sum_{j=2}^{D-1}u_j^2,
	\qquad
	w_d-t_B=-u_{D-1}-\frac{e_0}{2}.
	\]
	Therefore
	$e_0^2+(w_d-t_B)^2
	\le \frac32e_0^2+2u_{D-1}^2\le2R_B$,
	which proves \eqref{eq:bridgeends}.
	
	\emph{Case 2: $q_a=k\ge2$ and $d\ge5$.}
	Here $2\le k\le d-2$ and $r=d$. Put
	$Z=w_2+\cdots+w_{d-1}$. Since $K_{ii}=q_i$, the first two diagonal
	terms in $Q_B$ are
	$K_{aa}w_0^2+K_{a+1,a+1}w_1^2
	=q_aw_0^2+q_{a+1}w_1^2$.
	As $q_a=k$ and \eqref{eq:cutcount} gives $q_{a+1}\ge d-k$, these terms
	are at least $kw_0^2+(d-k)w_1^2$.
	
	To estimate the other terms, consider $D$ numbers
	$v_1,\ldots,v_D$ with consecutive differences
	$0,w_2,\ldots,w_{d-1},0$ and $v_1=0$. Thus
	$v_1=v_2=0$ and $v_{D-1}=v_D=Z$.
	For $1\le s\le t\le d$, \eqref{eq:overlap} gives
	$K_{a+s,a+t}\ge s(D-t)-k$.
	Summing these inequalities against the corresponding products of the
	consecutive differences gives a contribution of at least
	$\bv^\top L(K_D)\bv-kZ^2$.
	Indeed, the terms $s(D-t)$ give the quadratic form of $K_D$, while the
	subtracted $k$ contributes
	$k(0+w_2+\cdots+w_{d-1}+0)^2=kZ^2$.
	
	Now
	$\bv^\top L(K_D)\bv
	=D\sum_i v_i^2-(\sum_i v_i)^2$.
	Subject to $v_1=v_2=0$ and $v_{D-1}=v_D=Z$, this is minimized when
	$v_3=\cdots=v_{D-2}=Z/2$, and the minimum is $DZ^2$.
	Hence
	\[
	Q_B\ge kw_0^2+(d-k)w_1^2+(D-k)Z^2.
	\]
	Since $\Delta_B=w_0+w_1+Z$, weighted Cauchy--Schwarz gives
	\begin{equation}\label{eq:nonbridge3}
		Q_B\ge kw_0^2+(d-k)w_1^2+(D-k)Z^2
		\ge
		\frac{\Delta_B^2}
		{1/k+1/(d-k)+1/(D-k)}.
	\end{equation}
	
	The reciprocal sum in the denominator is convex for
	$k\in[2,d-2]$, so its maximum occurs at an endpoint. The larger
	endpoint value is $S_d=5/6+1/(d-2)$. Moreover,
	\[
	\frac d{d-1}-S_d
	=\frac16-\frac1{(d-1)(d-2)}>0
	\qquad(d\ge5).
	\]
	Thus $\kappa_d=d/S_d-(d-1)>0$, proving
	\eqref{eq:localstrict}.
	
	\emph{Case 3: $d=4$ and $q_a\ge2$.}
	Now $k=2$. By \eqref{eq:overlap},
	\[
	Q_B\ge
	2w_0^2+2w_1^2+4w_2^2+4w_3^2
	+2w_1w_2+4w_2w_3.
	\]
	If $T=w_1+w_2+w_3$, the last five terms equal
	\[
	\frac43T^2
	+\frac23(w_1-2w_3-w_2/2)^2
	+\frac52w_2^2.
	\]
	Hence
	$Q_B\ge2w_0^2+4T^2/3\ge4(w_0+T)^2/5$.
	Thus \eqref{eq:localstrict} holds with $\kappa_4=1/5$.
	For $d=3$, every small cut has size one, so there is no further case; we may set $\kappa_3=1$.
\end{proof}

\subsection{An adjacent-pair estimate}
Next, we use pairs of neighbouring positions to control the terms left over
from the interval estimates.

The following estimate will replace separate counts of small
differences in three applications. For $x,y,s\ge0$ and $c\ge1$,
\[
 (x-s)^2+(y-cs)^2+2xy
 =(x+y-cs)^2+2(c-1)sx+s^2\ge s^2.
\]
Hence, for $t\ge0$,
\begin{equation}\label{eq:adjacentpair}
 s^2\le(x-s)^2+2(y-ct)^2+2c^2(s-t)^2+2xy.
\end{equation}
Indeed, write $y-cs=(y-ct)+c(t-s)$ and use
$(u+v)^2\le2u^2+2v^2$.

Use disjoint selected intervals, allowing also an initial interval.
Write $U$ for the uncovered indices, $A$ for the union of the
nonbridge selected intervals, and $J$ for the endpoints of the
bridge intervals and of the initial interval, when present.
A neighbour of an index in $U$ belongs to $U$, $A$, or $J$.
Put $V=U\cup A$.
\begin{lemma}\label{lem:adjacent}
Suppose $a_i\ge0$, $1\le i\le M$, are ordered differences and
$q_i\ge2$ on $U$. Choose $p_i\ge0$, and choose $1\le c_j\le\alpha$
for $j\in J$, with $c_i=1$ on $U$. If $\mathcal R\ge0$ satisfies
 $$\sum_{i\in U}(a_i-p_i)^2+\sum_{j\in J}(a_j-c_jp_j)^2
       +\sum_{i\in A}p_i^2 +
 \sum_{\substack{1\le i<M\\\{i,i+1\}\cap U\ne\varnothing}}
       K_{i,i+1}a_i a_{i+1}\le C_d\mathcal R,$$
then
\begin{equation}\label{eq:adjacentcontrol}
 \sum_{i\in V}p_i^2\le C_d\left(\mathcal R+
                   \sum_{i=1}^{M-1}(p_{i+1}-p_i)^2\right).
\end{equation}
\end{lemma}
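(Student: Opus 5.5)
The plan is to bound $p_i^2$ separately for each $i\in V$, using one hypothesis term or a neighbouring index, and then to sum. Each term of the hypothesis, and each $(p_{i+1}-p_i)^2$, will be used only a bounded number of times. Throughout, $C_d$ is allowed to grow by factors depending only on $d$, as is the convention in the statement.

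For $i\in A$ there is nothing to do: $p_i^2$ appears directly in the hypothesis, so $\sum_{i\in A}p_i^2\le C_d\mathcal R$.

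Now fix $i\in U$. Since $q_i\ge2$, \eqref{eq:neighborcount} gives $K_{i-1,i}+K_{i,i+1}\ge q_i-1\ge1$, with out-of-range terms equal to zero. Hence there is a neighbour $j\in\{i-1,i+1\}$, inside the range, with $K_{ij}\ge1$. By the remark preceding the lemma, $j\in U\cup A\cup J$, and we treat the three cases in turn.

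\emph{Case $j\in A$.} Use $p_i^2\le2p_j^2+2(p_i-p_j)^2$. The first term is controlled by the hypothesis and the second is a path difference of $\mathbf p$.

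\emph{Case $j\in U\cup J$.} Apply \eqref{eq:adjacentpair} with $s=p_i$, $x=a_i$, $y=a_j$, $t=p_j$ and $c=c_j$, where $c_j=1$ on $U$ and $1\le c_j\le\al$ on $J$. This gives
\[
p_i^2\le(a_i-p_i)^2+2(a_j-c_jp_j)^2+2\al^2(p_i-p_j)^2+2a_ia_j.
\]
The first two terms are hypothesis terms: $(a_i-p_i)^2$ because $i\in U$, and $(a_j-c_jp_j)^2$ because $j\in U$ or $j\in J$. For the last term, $a_ia_j\le K_{ij}a_ia_j$ since $K_{ij}\ge1$ and all $a$'s are nonnegative. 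The pair $\{i,j\}$ meets $U$, so $K_{ij}a_ia_j$ is one of the retained adjacent-pair terms in the hypothesis.

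Summing over $i\in U$ completes the argument. Each index $j$ serves as the chosen neighbour for at most two indices $i$, and each adjacent pair $\{i,i+1\}$ arises at most twice. Therefore the hypothesis sum and $\sum_i(p_{i+1}-p_i)^2$ are each counted at most a constant number of times, with constants bounded in terms of $\al^2$. This yields \eqref{eq:adjacentcontrol}.

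The main obstacle is the case where the only neighbour with $K_{ij}\ge1$ lies in $A$. There the pair estimate would require control of $(a_j-p_j)^2$, which is not available. The resolution is the observation that $p_j^2$ itself is controlled on $A$, so that case bypasses the product term entirely. The only other point needing care is that the neighbour supplied by \eqref{eq:neighborcount} is genuinely in range, which holds by the zero convention at the end cuts.
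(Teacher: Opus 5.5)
Your proposal is correct and follows essentially the same route as the paper's proof. Like the paper, you use \eqref{eq:neighborcount} with $q_i\ge2$ to pick an adjacent $j$ with $K_{ij}\ge1$. For $j\in U\cup J$ you apply \eqref{eq:adjacentpair} with $c=c_j$, and for $j\in A$ you use $p_i^2\le2p_j^2+2(p_i-p_j)^2$. You then sum with bounded multiplicity, use $a_ia_j\le K_{ij}a_ia_j$, and add $\sum_{i\in A}p_i^2$.
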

\begin{proof}
For each $i\in U$, \eqref{eq:neighborcount} and integrality give a
neighbour $j$ with $K_{ij}\ge1$. If $j\in U\cup J$, apply
\eqref{eq:adjacentpair} with $x=a_i$, $y=a_j$, $s=p_i$, $t=p_j$,
and $c=c_j$. If $j\in A$, use instead
\[
 p_i^2\le2(p_i-p_j)^2+2p_j^2.
\]
Sum and add $\sum_{i\in A}p_i^2$. Each index can be chosen as a
neighbour at most twice, and each adjacent pair is used at most
twice. For a chosen pair, $a_i a_j\le K_{ij}a_i a_j$.
Thus the controlled terms have bounded multiplicity, proving
\eqref{eq:adjacentcontrol}. This includes the end indices.
\end{proof}

\subsection{Comparison with a path}
We replace values inside each selected interval by evenly spaced values,
allowing us to compare the graph with a path.

Define a vector $\y$ by replacing the components on each selected interval
by equally spaced values. More precisely, if $B_a$ has length $r$, set
\begin{equation}\label{eq:interpolation}
 y_{a+j}=x_a+\frac j r(x_{a+r}-x_a),\qquad 0\le j\le r,
\end{equation}
and let $y_i=x_i$ at all other positions. The intervals of difference
indices are disjoint, so these definitions agree even when two vertex
intervals have a common endpoint. Let $U$ be the difference indices not
in any selected interval, and put $b_i=y_{i+1}-y_i$.


\begin{lemma}\label{lem:pathcomparison}
	Let $\x$ be any ordered real vector, and let $\y$ be the interpolated
	vector defined above. Then
	\begin{equation}\label{eq:pathcomparison}
		\x^\top L(G)\x
		\ge \al\sum_{i=1}^{n-1}b_i^2
		=\al\,\y^\top L(P_n)\y.
	\end{equation}
	Moreover,
	\begin{equation}\label{eq:vectorerror}
		\|\x-\y\|^2
		\le \frac{dD}{\al}\,\x^\top L(G)\x.
	\end{equation}
	These inequalities hold for every ordered real vector $\x$, not only
	for a Fiedler vector.
\end{lemma}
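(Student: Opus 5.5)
The plan is to start from the cut decomposition \eqref{eq:full}, $\x^\top L(G)\x=\sum_{i,j}K_{ij}a_ia_j$, in which every term is nonnegative. We split the difference indices $\{1,\ldots,n-1\}$ into the disjoint selected intervals $B_a$ of \eqref{eq:windows} and the set $U$ of uncovered indices. Then we discard all off-diagonal terms $K_{ij}a_ia_j$ except those with $i,j$ in the same interval. This leaves
\[
 \x^\top L(G)\x\ \ge\ \sum_{B}Q_B+\sum_{i\in U}q_ia_i^2 .
\]
Because the intervals are disjoint, the retained blocks $Q_B$ do not overlap, and the dropped terms are nonnegative, so this lower bound is legitimate.

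Next we compare each piece with the path form $\al\sum_i b_i^2$. For $i\in U$ the cut is not small, so $q_i\ge d-1=\al$. Also $b_i=a_i$ there, since $\y$ agrees with $\x$ at both ends of such a difference. Hence each such index contributes at least $\al b_i^2$. On an interval $B$ of length $r=|B|$, the interpolation \eqref{eq:interpolation} makes every $b_i$ equal to $\Delta_B/r$, so that $\al\sum_{i\in B}b_i^2=\al\Delta_B^2/|B|$. This is exactly what \eqref{eq:localbase} of \Cref{lem:local} bounds by $Q_B$. Summing over $U$ and over all intervals gives \eqref{eq:pathcomparison}, and the identity with $\al\,\y^\top L(P_n)\y$ is just the definition of the path form. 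Nothing here uses the eigen-equation, so the argument holds for every ordered $\x$.

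For \eqref{eq:vectorerror}, note that $\x-\y$ vanishes outside the interiors of the selected vertex intervals. Inside an interval starting at $a$, both $x_{a+j}$ and $y_{a+j}$ lie in $[x_a,x_{a+r}]$, because $\x$ is ordered and $\y$ interpolates linearly. Hence $|x_{a+j}-y_{a+j}|\le\Delta_B$. There are $r-1\le D-1=d$ interior positions, so the contribution of $B$ to $\|\x-\y\|^2$ is at most $d\Delta_B^2$. By \eqref{eq:localbase} and $|B|\le D$ we have $\Delta_B^2\le (D/\al)Q_B$, so this contribution is at most $(dD/\al)Q_B$. Summing over $B$ and using $\sum_BQ_B\le\x^\top L(G)\x$, which is again the nonnegativity in \eqref{eq:full}, proves the bound.

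The main point needing care is the bookkeeping in the first step. We must check that the intervals are genuinely disjoint as sets of difference indices, that all of them lie in $\{1,\ldots,n-1\}$, and that every uncovered index has $q_i\ge\al$. All three facts were established after \eqref{eq:windows}, so once these are recalled the rest is direct.
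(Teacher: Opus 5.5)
Your proof is correct and follows the paper's argument. You keep only the within-interval blocks $Q_B$ and the uncovered diagonal terms $q_ia_i^2\ge\al a_i^2$ of \eqref{eq:full}, and apply \eqref{eq:localbase} on each interval. For \eqref{eq:vectorerror} you bound $|x_{a+j}-y_{a+j}|\le\Delta_B$ and use $(r-1)\Delta_B^2\le d\,(D/\al)\,Q_B$, which is exactly what the paper does.
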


\begin{proof}
	On each interval $B$, all differences $b_i$ are equal to
	$\Delta_B/|B|$, and hence
	\[
	\sum_{i\in B}b_i^2=\frac{\Delta_B^2}{|B|}.
	\]
	On $U$, we have $b_i=a_i$ and $q_i\ge\al$. Therefore
	\Cref{lem:local} together with \eqref{eq:full} gives
	\eqref{eq:pathcomparison}.
	
	For the second inequality, consider an interval
	$B_a=\{a,\ldots,a+r-1\}$. Both $x_{a+j}$ and $y_{a+j}$ lie between
	$x_a$ and $x_{a+r}$, so
	$|x_{a+j}-y_{a+j}|\le\Delta_B$. Hence
	\[
	\sum_{j=1}^{r-1}(x_{a+j}-y_{a+j})^2
	\le (r-1)\Delta_B^2.
	\]
	Since $r\le D$, \Cref{lem:local} gives
	\[
	(r-1)\Delta_B^2
	\le \frac{r(r-1)}{\al}Q_B
	\le \frac{dD}{\al}Q_B.
	\]
	Summing over all intervals and using the fact that their contributions
	to \eqref{eq:full} are disjoint gives
	\[
	\|\x-\y\|^2
	\le \frac{dD}{\al}\sum_B Q_B
	\le \frac{dD}{\al}\x^\top L(G)\x,
	\]
	which is \eqref{eq:vectorerror}.
\end{proof}

\subsection{Comparison graphs and an upper bound}\label{sec:upper}
We construct graphs with small algebraic connectivity. Exact
minimality will then give the same upper bound for a minimizing
graph, before its structure is known.

In each class the comparison graphs consist of $m$ copies of $L_d$
in a chain, with two
connected end subgraphs of bounded order. Each end subgraph is joined
to the chain by one bridge, so $n=Dm+O_d(1)$.

In the minimum-degree classes, write $n=D(m+2)+r$, where
$0\le r<D$. Take $K_D$ and $K_{D+r}$ as the two end subgraphs,
with any vertex in each as its attachment vertex. Join these to the
outer singleton vertices of the $L_d$ chain. For all sufficiently
large $n$, this graph has minimum degree exactly $d$.

For odd $d$, we need end subgraphs that give a regular graph at every
sufficiently large even order. We give the two bounded building blocks
explicitly; they will also be used in \Cref{or:lem:comparisons}.
For even $N$ with $D\le N\le2d$, start with the graph on
$\mathbb Z_N$ in which each vertex $i$ is adjacent to
\[
 i\pm1,\ldots,i\pm\frac{d-1}{2},\qquad i+\frac N2
 \quad\pmod N.
\]
These are $d$ distinct neighbours. Delete the edge $01$ and call the
resulting graph $Q_N$. It is connected, since the cycle of edges of
step one becomes a spanning path after this deletion. Its vertices
$0,1$ have degree $d-1$, and all other vertices have degree $d$.

Let $E_d$ be obtained from $K_{d+2}$ by choosing distinct vertices
$r,u,v$, deleting $ru$ and $rv$, and deleting a perfect matching on
the remaining $d-1$ vertices. Such a matching exists because $d-1$
is even. The vertex $r$ has degree $d-1$, all other vertices have
degree $d$, and $E_d$ is connected: every pair of nonadjacent vertices
has a common neighbour.

The possible values $N=D,D+2,\ldots,2d$ represent every even residue
class modulo $D$. Thus, for every sufficiently large even $n$, we can
write
\[
 n=mD+N+2(d+2),\qquad m\ge1.
\]
Join, in order, one copy of $E_d$, $m$ copies of $L_d$, one copy of
$Q_N$, and another copy of $E_d$ by bridges. Use each degree-$(d-1)$
vertex once as a bridge endpoint. The result is connected, simple and
$d$-regular. Its two bounded end subgraphs are $E_d$ and the joined
pair $Q_N,E_d$.

In either class, denote the resulting comparison graph by $G_{n,d}$.
These are the same type of chain comparison used in \cite{AbGhDiam}.
The following calculation gives the required error term directly.

\begin{lemma}\label{lem:comparisonupper}
For every fixed $d\ge3$, there are constants $A_d,n_d>0$ such that,
for every admissible $n\ge n_d$, the comparison graph in either
minimum-degree class, and in the regular class when $d$ is odd,
satisfies
\begin{equation}\label{eq:comparisonerror}
 \mu(G_{n,d})\le\frac{(d-1)\pi^2}{n^2}+\frac{A_d}{n^3}.
\end{equation}
\end{lemma}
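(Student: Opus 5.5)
We will prove \eqref{eq:comparisonerror} by exhibiting an explicit test vector $\y$ on $G_{n,d}$ and bounding its centered Rayleigh quotient via \eqref{eq:centeredRayleigh}. The model is a weighted path.

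In the $L_d$ chain, each copy of $L_d$ consists of a left singleton, a $K_{d-1}$, and a right singleton. Consecutive copies are joined by a bridge from a right singleton to the next left singleton. So each block contributes $D$ vertices and exactly three ``steps'':
\begin{itemize}
\item left singleton $\to$ clique, carried by $\al$ edges;
\item clique $\to$ right singleton, carried by $\al$ edges;
\item the bridge, carried by one edge.
\end{itemize}

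\textbf{Test vector on the chain.} Choose a smooth profile $\phi(s)=-\cos(\pi s)$ on $[0,1]$. Give all vertices of a block values near $\phi$ at the block's position, with increments split in the optimal ratio. The series resistance of one block is $1/\al+1/\al+1$, so by \eqref{eq:weighted-squares} the optimal split assigns a fraction $\al/(\al+2)$ of the block's increment $\delta$ to the bridge and $1/(\al+2)$ to each clique edge group. Each of the two clique steps then costs $\al\cdot\bigl(\delta/(\al+2)\bigr)^2$ and the bridge costs $\bigl(\al\delta/(\al+2)\bigr)^2$, so the energy per block is $\al\delta^2/(\al+2)=\al\delta^2/D$.

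Concretely, let $s_k$ be the position of block $k$ measured in vertex count divided by $n$, so $s_{k+1}-s_k=D/n$. Set
\begin{itemize}
\item left singleton: $\phi(s_k)$;
\item all $d-1$ clique vertices: $\phi(s_k)+\delta_k/D$;
\item right singleton: $\phi(s_k)+2\delta_k/D$;
\end{itemize}
where $\delta_k=\phi(s_{k+1})-\phi(s_k)$. Let every vertex of the two bounded end subgraphs take the constant value $\phi(0)$ or $\phi(1)$ respectively, so that edges inside the end subgraphs contribute nothing.

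\textbf{Energy estimate.} Summing gives $\y^\top L\y=\sum_k \al\delta_k^2/D$. Since $\delta_k=\phi'(s_k)D/n+O(n^{-2})$, this equals
\[
\frac{\al}{n}\int_0^1\phi'^2\,ds+O(n^{-2})=\frac{\al\pi^2}{2n}+O(n^{-2}).
\]
Here I will use a midpoint or trapezoid comparison with $\phi$ smooth, so that the discretization error is a relative $O(n^{-1})$ and not worse. The bridges joining the end subgraphs to the chain are absorbed by the boundary choice, since $\phi'$ vanishes at $0$ and $1$.

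\textbf{Norm estimate.} Within a block, values differ from $\phi(s_k)$ by $O(1/n)$. Hence $\|\y\|^2=n\int_0^1\phi^2\,ds+O(1)=n/2+O(1)$, where the $O(1)$ collects the end-subgraph vertices (a bounded number, each with value $O(1)$) and the Riemann-sum errors. By the symmetry $\phi(1-s)=-\phi(s)$, the mean satisfies $\sum_i y_i=O(1)$, so by \eqref{eq:centering} the centering correction is $O(1/n)$. The quotient is therefore
\[
\frac{\al\pi^2/(2n)+O(n^{-2})}{n/2+O(1)}=\frac{\al\pi^2}{n^2}+O(n^{-3}).
\]

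\textbf{Main obstacle.} The delicate step is ensuring every error is genuinely a relative $O(1/n)$. The end portions shift the effective length of the chain by $O(1)$ vertices, so the chain occupies only $[0,1]$ up to $O(1/n)$; the energy and norm sums must likewise be handled to $O(1/n)$ relative accuracy. I would handle this by parametrizing positions exactly by cumulative vertex count: place block $k$ at the midpoint of its $D$ vertices divided by $n$, and absorb the end subgraphs as intervals of length $O(1/n)$ at the ends, where $\phi$ is flat up to $O(n^{-2})$. With this bookkeeping all Riemann sums are midpoint sums of smooth functions, with relative error $O(n^{-1})$ or better. The same construction applies verbatim to both minimum-degree classes and to the odd regular class, since only the boundedness of the end subgraphs and the $L_d$ chain structure are used.
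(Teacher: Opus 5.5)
Your proposal is correct and is essentially the paper's argument: the same per-block test vector (left singleton, clique raised by $\delta/D$, right singleton by $2\delta/D$, bridge carrying $\al\delta/D$, so energy $\al\delta^2/D$ per block), constant values on the two bounded end subgraphs, and the centered Rayleigh quotient \eqref{eq:centeredRayleigh}. The only difference is technical. The paper samples the exact discrete Fiedler vector of $P_m$ on the $m$ blocks, so the energy $\al p_m/D$ and the chain norm are exact identities, and the error comes only from centering and from $n=Dm+O_d(1)$. You instead sample $-\cos(\pi s)$ on the $n$-scale and use Riemann sums, and first-order (relative $O(1/n)$) accuracy there does suffice for the $O_d(n^{-3})$ error.
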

\begin{proof}
Let $m\ge2$ be the number of middle copies of $L_d$, and let the
end subgraphs have orders $b_-,b_+$. Thus
$n=Dm+b_-+b_+$, where $b_-+b_+=O_d(1)$.
Put
\[
 x_j=\sqrt{\frac2m}\cos\frac{(2j-1)\pi}{2m},
 \qquad 1\le j\le m,
 \qquad \Delta_j=x_{j+1}-x_j\quad(1\le j<m).
\]
The path formulas in \Cref{sec:prelim} give
\[
 \sum_{j=1}^m x_j=0,\qquad
 \sum_{j=1}^m x_j^2=1,\qquad
 \sum_{j=1}^{m-1}\Delta_j^2=p_m.
\]
Assign the value $x_j$ to the left singleton of the $j$th copy.
For $j<m$, assign $x_j+\Delta_j/D$ to each of its $d-1$ middle
vertices and $x_j+2\Delta_j/D$ to its right singleton. Give every
vertex of the last copy the value $x_m$. Extend the vector constantly
over the left and right end subgraphs, with values $x_1$ and $x_m$,
respectively. Call the resulting vector $\y$.

In the $j$th copy and its outgoing bridge, the two internal drops are
$\Delta_j/D$, each repeated $\alpha=d-1$ times, and the bridge drop
is $\alpha\Delta_j/D$. Thus
\[
 \y^\top L(G_{n,d})\y
 =\frac{\alpha}{D}\sum_{j=1}^{m-1}\Delta_j^2
 =\frac{\alpha}{D}p_m.
\]
All other edges have zero drop. The squared norm on the middle chain
is
\begin{align*}
 \sum_{v\text{ in the chain}}y_v^2
 &=D+2\sum_{j=1}^{m-1}x_j\Delta_j
       +\frac{D+2}{D^2}\sum_{j=1}^{m-1}\Delta_j^2\\
 &=D-\frac{D^2-D-2}{D^2}p_m,
\end{align*}
since $x_m^2=x_1^2$ and
$2\sum_jx_j\Delta_j=x_m^2-x_1^2-\sum_j\Delta_j^2=-p_m$.
The end subgraphs add nonnegative squared mass. The sum of the
components on the chain is $x_m-x_1$, so
\[
 \1^\top\y=(b_--1)x_1+(b_++1)x_m=O_d(m^{-1/2}).
\]
Centering therefore gives
$\|\y-\bar y\1\|^2\ge D-O_d(m^{-2})$.
By \eqref{eq:centeredRayleigh},
\[
 \mu(G_{n,d})
 \le\frac{\alpha p_m/D}{D-O_d(m^{-2})}
 \le\frac{\alpha\pi^2}{D^2m^2}+O_d(m^{-4}).
\]
Finally, $n=Dm+O_d(1)$ gives
$(D^2m^2)^{-1}=n^{-2}+O_d(n^{-3})$, proving the claim.
\end{proof}

\section{Structure of graphs with small algebraic connectivity}\label{sec:stability}
In this section, we show that a graph whose algebraic connectivity
is at most the comparison value, up to a small error, is mostly
made of $L_d$ blocks.

The following theorem makes this precise. The copies of $L_d$
in its conclusion are blocks of the graph.
\begin{theorem}\label{thm:stability}
	For every fixed $d\ge3$, there are constants $C_d,n_d$ such that the
	following holds. Suppose $|V(G)|=n\ge n_d$, $\delta(G)\ge d$, and
	\[
	0\le\e\le1,\qquad n^2\mu(G)\le(d-1)\pi^2+\e.
	\]
	Pairwise vertex-disjoint blocks isomorphic to $L_d$ cover at least
	\begin{equation}\label{eq:coverage}
		n-C_d(\e+n^{-1})^{1/3}n
	\end{equation}
	vertices. Exactly two edges leave each selected block. They are bridges
	and meet the endpoints of its missing edge, one at each endpoint.
	The blocks lie, in order, along one sequence of bridges. Deleting all
	other vertices leaves components that are chains of the selected blocks.
	Moreover,
	\begin{equation}\label{eq:diamstable}
		\dm(G)\ge\frac{3n}{d+1}-C_d(\e+n^{-1})^{1/3}n.
	\end{equation}
\end{theorem}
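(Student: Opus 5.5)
Let $\x$ be a unit mean-zero Fiedler vector of $G$. Order the vertices so that $\x$ is nondecreasing, and form the interpolated vector $\y$ of \eqref{eq:interpolation}. Write $\theta=\e+n^{-1}$.

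\textbf{Step 1: $\y$ is nearly extremal for the path.} By \Cref{lem:pathcomparison},
\[
\mu=\x^\top L(G)\x\ge\al\,\y^\top L(P_n)\y ,
\qquad
\|\x-\y\|^2\le\frac{dD}{\al}\,\mu=O_d(n^{-2}).
\]
The second bound, together with $\1^\top\x=0$ and $\|\x\|=1$, gives $\bigl|\|\y-\bar y\1\|^2-1\bigr|=O_d(n^{-1})$. Using \eqref{eq:pathineq}, $p_n=\pi^2/n^2+O(n^{-4})$, and the hypothesis $n^2\mu\le\al\pi^2+\e$, I obtain
\[
0\le\y^\top L(P_n)\y-p_n\|\y-\bar y\1\|^2\le C\theta/n^2 .
\]
In the same chain of inequalities I keep every discarded nonnegative term of \eqref{eq:full}, and call their total $\mathcal R$. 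Then $\al\,\mathcal R\le C\theta/n^2$. The discarded terms are:
\begin{itemize}
\item the off-block cross terms $K_{ij}a_ia_j$;
\item the excess $(q_i-\al)a_i^2$ on uncovered cuts $U$;
\item the surplus $\kappa_d\Delta_B^2/d$ on nonbridge intervals, from \eqref{eq:localstrict};
\item the remainders $R_B$ on bridge intervals, controlled by \eqref{eq:bridgeends}.
\end{itemize}
\Cref{lem:pathstable} then gives $\sum_i(b_i-h_i)^2\le C'\theta/n^2$, with no sign change needed since $\y$ is nondecreasing.

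\textbf{Step 2: identify a set $A$ of small differences.} I take $A$ to be the uncovered indices together with the nonbridge intervals, and show that $\sum_{i\in A}b_i^2=O(\theta/n^2)$. For nonbridge intervals this follows directly from the $\kappa_d$ surplus. For $U$, I apply \Cref{lem:adjacent} with $p_i=h_i$, which is smooth, so $\sum(p_{i+1}-p_i)^2=O(n^{-5})\le O(\theta/n^2)$. The adjacent-pair cross terms it needs are bounded by $\mathcal R$, and for $j\in J$ I take $c_j\in\{1,\al\}$ to match \eqref{eq:bridgeends}. This gives $\sum_{i\in V}h_i^2=O(\theta/n^2)$, and the last part of \Cref{lem:pathstable} yields $|A|\le C'\theta^{1/3}n$.

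The same argument applied to the excess terms does more. On a bridge interval where the structure is imperfect, an internal cut has $q$ above the clique value $s(D-t)-1$, or some edge crosses extra cuts. Each such defect contributes a term of order at least $h_i^2$ to $\mathcal R$. Hence the number of bridge intervals that are \emph{not} exactly an $L_d$ block bridged at its singletons is also $O(\theta^{1/3}n)$.

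\textbf{Step 3: read off the structure.} For a defect-free bridge interval starting at a cut with $q_a=1$ and followed by a cut with $q_{a+D}=1$, the degree counts force equality in \eqref{eq:overlap}. So the $D$ vertices span $K_D-e$, the missing edge joins the first and last vertex, and the only outside edges are the two bridges, one at each singleton. Consecutive good intervals share a bridge, so runs of them form $L_d$ chains lying in order along one bridge sequence. The number of uncovered vertices is at most $D$ times the number of bad indices or intervals, which gives \eqref{eq:coverage}. Because the blocks appear in Fiedler order with bridges between them, deleting the remaining vertices leaves chain components.

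\textbf{Step 4: diameter.} A geodesic through $k$ consecutive chained blocks uses $3$ edges per block (two inside $L_d$ plus one bridge). Since the blocks lie on a single bridge sequence that every path between the ends must traverse, $\dm(G)\ge 3(n-C\theta^{1/3}n)/D-O(1)$, which is \eqref{eq:diamstable}.

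The main obstacle is Step 3's quantitative claim that every imperfect bridge window costs $\gtrsim h_i^2$ in $\mathcal R$. This requires a strict version of Case 1 of \Cref{lem:local}: a gap $\kappa'_d$ whenever the window is not exactly $L_d$ with the correct attachments. I would first prove it by finite case analysis on integer cut vectors $(q_{a+1},\dots,q_{a+d})$, using the observation that, since the window has bounded size, any integer deviation gives a uniform positive quadratic surplus relative to $\Delta_B^2$.
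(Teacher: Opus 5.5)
Your Steps 1 and 4, and the first paragraph of Step 2, agree with the paper. The bound $|V|\le C_d\theta^{1/3}n$ is \Cref{lem:exceptionalindices}, and taking $p_i=h_i$ instead of $p_i=b_i$ in \Cref{lem:adjacent} works equally well. (The variation term is $\sum_i(h_{i+1}-h_i)^2=p_n^2=O(n^{-4})$, not $O(n^{-5})$; this is still harmless since $\theta\ge n^{-1}$.)

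The gap is the lemma you call the main obstacle. You want a strict form of Case~1 of \Cref{lem:local}, giving a surplus $\kappa'_d\Delta_B^2$ for every bridge window that is not an $L_d$ block with correct attachments. That statement is false, and no case analysis on $(q_{a+1},\dots,q_{a+d})$ can establish it. Neither $Q_B$ nor that cut vector records where the middle vertices send their right-hand edges, or whether the cut at $a+D$ is a bridge. An edge from $a+j$ ($2\le j\le d$) to $a+D$ crosses exactly the same cuts of $B$ as an edge from $a+j$ to a vertex beyond $a+D$, and the edges of $a+D$ itself cross no cut of $B$.

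For example, take $d=3$. Let the bridge enter $a+1$, let $a+1,a+2,a+3$ span a triangle, and join $a+2,a+3$ to vertices beyond $a+4$ instead of to $a+4$, so that $a+4$ has all its neighbours further right. The cut vector is $(2,3,2)$, and $Q_B$ is the same quadratic form as for a genuine $L_3$. At $(w_0,w_1,w_2,w_3)=(2t,t,0,t)$ it gives $R_B=0$, yet these four vertices contain no $L_3$ block. Nor does this defect force a term of order $h_i^2$ into $\mathcal R$: the extra cross terms involve only differences at or beyond index $a+D$, and these may vanish.

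What is missing is a counting step rather than a local inequality. A bridge window that is not followed by another cut of size one at distance exactly $D$ is immediately followed by an index of $V$ (uncovered, or the start of a nonbridge interval). Hence there are at most $|V|$ such windows. The paper phrases this through the packing identity \eqref{eq:bridgepacking}: with $r=n-1-Dk=|V|$, all but $(D+1)(r+1)$ vertices lie in gaps of exactly $D$ between consecutive cuts of size one. Exactly two edges leave such a gap, so degree counting alone leaves $K_D$ or $K_D-e$, and in the latter case the two bridges automatically sit at the endpoints of the missing edge. A strict local inequality is then needed only to rule out the $K_D$ gaps, namely \eqref{eq:completegap}, after which \Cref{lem:pathstable} bounds their number. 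With this replacement, your Steps~3 and~4 go through.
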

The main idea is to use the terms of \eqref{eq:full} that were discarded
in the local path comparison. If many positions do not lie in
the desired bridge intervals, these extra nonnegative terms become too
large to be compatible with the assumed upper bound on $\mu$.

Let $\x$ be a nondecreasing unit Fiedler vector, and use the selected
intervals and the interpolated vector $\y$ from
\Cref{sec:numerical}. Thus
$a_i=x_{i+1}-x_i$, $b_i=y_{i+1}-y_i$, and $U$ is the set of difference
indices that do not belong to a selected interval.

For every selected interval $B$, put
\[
R_B=Q_B-\al\frac{\Delta_B^2}{|B|},
\qquad
R=\mu-\al\y^\top L(P_n)\y.
\]
Also, let $\mathcal P$ be the set of pairs $(i,j)$ with $i<j$ such that
$i$ and $j$ do not belong to the same selected interval.

We now separate the terms in \eqref{eq:full}. The terms whose two
indices lie in the same selected interval $B$ have total contribution
$Q_B$. If $i\in U$, the diagonal term is $q_i a_i^2$. All remaining
terms have two distinct indices and are indexed by $\mathcal P$.
Therefore
\[
\mu
=\sum_B Q_B+\sum_{i\in U}q_i a_i^2
+2\sum_{(i,j)\in\mathcal P}K_{ij}a_i a_j.
\]
On the other hand, on a selected interval $B$ all the differences
$b_i$ are equal to $\Delta_B/|B|$, while $b_i=a_i$ for $i\in U$.
Hence
\[
\al\y^\top L(P_n)\y
=\sum_B\al\frac{\Delta_B^2}{|B|}
+\sum_{i\in U}\al a_i^2.
\]
Subtracting these two identities gives
\begin{equation}\label{eq:excess}
	R
	=\sum_B R_B
	+\sum_{i\in U}(q_i-\al)a_i^2
	+2\sum_{(i,j)\in\mathcal P}K_{ij}a_i a_j.
\end{equation}

Every term on the right is nonnegative. Indeed,
\Cref{lem:local} gives $R_B\ge0$, while $q_i\ge\al$ for
$i\in U$. In the last sum, $K_{ij}\ge0$ and $a_i,a_j\ge0$.
The factor $2$ appears because \eqref{eq:full} contains both
$K_{ij}a_i a_j$ and $K_{ji}a_j a_i$ when $i\ne j$.
Thus \eqref{eq:excess} is an exact decomposition of the amount by
which $\mu$ exceeds the path lower bound.

We now show that the difference $R=\mu-\al\,\y^\top L(P_n)\y$ is very small. This also implies that the
interpolated vector $\y$ is very close to the Fiedler vector of the path,
so its consecutive differences are close to the path differences $h_i$.

\begin{lemma}\label{lem:smallremaining}
	Put $\theta=\e+n^{-1}$ and $\eta=\theta^{1/3}$. If $\theta$ is
	sufficiently small, depending only on $d$, then
	\begin{align}
		0\le R&\le C_d\frac{\theta}{n^2},
		\label{eq:smallR}\\
		0\le \y^\top L(P_n)\y-p_n\|\y-\bar y\1\|^2
		&\le C_d\frac{\theta}{n^2},
		\label{eq:smallpathdiff}\\
		\sum_{i=1}^{n-1}(b_i-h_i)^2
		&\le C_d\frac{\theta}{n^2}.
		\label{eq:bdifference}
	\end{align}
\end{lemma}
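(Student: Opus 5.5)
We combine \Cref{lem:pathcomparison}, the path inequality \eqref{eq:pathineq}, the assumed upper bound on $\mu$, and \Cref{lem:pathstable}. Here $\x$ is a nondecreasing unit Fiedler vector, so $\x\perp\1$, $\|\x\|=1$ and $\x^\top L(G)\x=\mu$. Throughout, $\theta=\e+n^{-1}$.

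\textbf{Step 1: nonnegativity and normalization of $\y$.} The decomposition \eqref{eq:excess} already gives $R\ge0$, since every term on its right side is nonnegative. By \eqref{eq:vectorerror}, $\|\x-\y\|^2\le (dD/\al)\mu=O_d(n^{-2})$. Since $\x$ is centered, centering is an orthogonal projection that fixes $\x$, so
\[
\bigl|\|\y-\bar y\1\|-1\bigr|\le\|\x-\y\|=O_d(n^{-1}).
\]
Consequently $\bigl|\|\y-\bar y\1\|^2-1\bigr|\le C_d n^{-1}\le C_d\theta$. This is the normalization hypothesis of \Cref{lem:pathstable}.

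\textbf{Step 2: a two-sided sandwich.} By \eqref{eq:pathineq}, $\y^\top L(P_n)\y\ge p_n\|\y-\bar y\1\|^2$. Writing $\mu=\al\,\y^\top L(P_n)\y+R$, we get
\[
\al p_n\|\y-\bar y\1\|^2+R\;\le\;\al\,\y^\top L(P_n)\y+R\;=\;\mu\;\le\;\frac{\al\pi^2+\e}{n^2}.
\]
Now $p_n=\pi^2/n^2+O(n^{-4})$ and $\|\y-\bar y\1\|^2\ge1-C_dn^{-1}$, so
\[
\al p_n\|\y-\bar y\1\|^2\ge\frac{\al\pi^2}{n^2}-\frac{C_d}{n^3}.
\]
Subtracting, the sum $R+\al\bigl(\y^\top L(P_n)\y-p_n\|\y-\bar y\1\|^2\bigr)$ is at most $(\e+C_dn^{-1})/n^2\le C_d\theta/n^2$. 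Both summands are nonnegative, so each is at most this bound. This proves \eqref{eq:smallR} and \eqref{eq:smallpathdiff}.

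\textbf{Step 3: closeness to the path differences.} Steps 1 and 2 verify both hypotheses of \Cref{lem:pathstable}, with a constant depending only on $d$. The interpolated vector $\y$ is nondecreasing: it agrees with $\x$ off the selected intervals and interpolates monotonically inside them. Hence no sign change is needed, and \eqref{eq:pathclose} is exactly \eqref{eq:bdifference}, provided $\theta$ is small enough for \Cref{lem:pathstable} to apply.

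\textbf{Expected difficulty.} The only delicate point is the normalization in Step 1. The estimate \eqref{eq:vectorerror} must be applied with $\mu=O_d(n^{-2})$, and this is what turns the error into $O(n^{-1})$, which is absorbed by the $n^{-1}$ built into $\theta$. One must also be careful to use the centered norm and not $\|\y\|$, which is handled by the projection argument above. The rest is bookkeeping of constants depending on $d$.
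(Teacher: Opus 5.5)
Your proof is correct and follows the paper's own argument: you use \eqref{eq:vectorerror} with $\mu=O_d(n^{-2})$ to normalize $\y$, sandwich $\al p_n\|\y-\bar y\1\|^2$ below the assumed upper bound on $\mu$ (splitting via $R\ge0$ and \eqref{eq:pathineq}), and then apply \Cref{lem:pathstable} to the nondecreasing vector $\y$. Your orthogonal-projection argument for the centered norm simply spells out a step the paper states without detail.
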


\begin{proof}
	The hypothesis of \Cref{thm:stability} gives
	$\mu=O_d(n^{-2})$. By \eqref{eq:vectorerror},
	$\|\x-\y\|=O_d(n^{-1})$, and hence
	$\|\y-\bar y\1\|^2=1+O_d(n^{-1})$.
	
	Since
	$p_n=2(1-\cos(\pi/n))=\pi^2/n^2+O(n^{-4})$, it follows that
	\[
	\al p_n\|\y-\bar y\1\|^2
	=\frac{\al\pi^2}{n^2}+O_d(n^{-3}).
	\]
	On the other hand, \eqref{eq:pathcomparison} and the assumption in
	\Cref{thm:stability} give
	\[
	\al\y^\top L(P_n)\y
	\le\mu
	\le\frac{\al\pi^2+\e}{n^2}.
	\]
	Combining these inequalities with \eqref{eq:pathineq} gives
	\eqref{eq:smallR} and \eqref{eq:smallpathdiff}.
	
	\Cref{lem:pathstable} now applies to $\y$. Since $\y$ is
	nondecreasing, no change of sign is needed, and therefore
	\eqref{eq:bdifference} follows.
	
	If $\theta$ is not sufficiently small, the estimates can be made
	trivial by increasing $C_d$.
\end{proof}

\subsection{The number of exceptional difference indices}
We show that only a small number of positions can lie outside the selected
intervals that start at bridges.

We now show that only a small number of difference indices fail to lie in selected intervals beginning at bridges. More precisely, we prove that the corresponding differences have very small total squared sum, which then gives the required bound on the number of such indices.

\begin{lemma}\label{lem:exceptionalindices}
	Let $A$ be the union of the nonbridge selected intervals, and put
	$V=U\cup A$. Then
	\begin{equation}\label{eq:Venergy}
		\sum_{i\in V}b_i^2\le C_d\frac{\theta}{n^2}.
	\end{equation}
	Consequently,
	\begin{equation}\label{eq:Vsize}
		|V|\le C_d\eta n.
	\end{equation}
	In particular, all but $O_d(\eta n)$ difference indices lie in selected
	intervals beginning at bridges.
\end{lemma}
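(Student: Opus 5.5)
The plan is to apply \Cref{lem:adjacent} with the choice $p_i=h_i$, the path differences, and then convert the bound on $\sum_{i\in V}h_i^2$ into one for $\sum_{i\in V}b_i^2$ using \eqref{eq:bdifference}. The second assertion \eqref{eq:Vsize} will then follow from the last part of \Cref{lem:pathstable}: the set $V\subseteq\{1,\ldots,n-1\}$ satisfies \eqref{eq:smallset} with $C$ replaced by a $d$-dependent constant, so $|V|\le C'\theta^{1/3}n=C_d\eta n$.

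To apply \Cref{lem:adjacent}, I take $M=n-1$, and let $U$, $A$, $J$ be as defined before that lemma, where $J$ collects the two endpoint indices $a$ and $a+d$ of every bridge interval $B_a$. On $U$ no cut is small, so $q_i\ge d-1\ge2$. For $j=a\in J$ I set $c_j=\al$, and for $j=a+d$ I set $c_j=1$; both lie in $[1,\al]$. The next step is to check the hypothesis with $\mathcal R=\theta/n^2$, one term at a time.

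\begin{itemize}
\item For $i\in U$, we have $a_i=b_i$, so $\sum_U(a_i-h_i)^2\le\sum(b_i-h_i)^2=O_d(\theta/n^2)$ by \eqref{eq:bdifference}.
\item For the bridge endpoints, \eqref{eq:bridgeends} gives $(a_a-\al t_B)^2+(a_{a+d}-t_B)^2\le2R_B$. Since $b_i=t_B$ on $B$, we get $(a_j-c_jh_j)^2\le 2(a_j-c_jt_B)^2+2\al^2(b_j-h_j)^2$. Summing over bridge intervals and using $\sum_BR_B\le R=O_d(\theta/n^2)$ from \eqref{eq:excess} and \eqref{eq:smallR} handles these terms.
\item For $A$, I use $h_i^2\le2(h_i-b_i)^2+2b_i^2$. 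On a nonbridge interval $B$ with $|B|=d$, we have $\sum_{i\in B}b_i^2=\Delta_B^2/d\le Q_B/(\al+\kappa_d)$ by \eqref{eq:localstrict}. Moreover $R_B=Q_B-\al\Delta_B^2/d\ge\kappa_d\Delta_B^2/d$, so $\sum_A b_i^2\le\kappa_d^{-1}\sum_BR_B=O_d(\theta/n^2)$.
\item For the adjacent-pair terms $K_{i,i+1}a_ia_{i+1}$ with one index in $U$, the two indices are never in the same selected interval, so the pair belongs to $\mathcal P$. Hence these terms are bounded by the last sum in \eqref{eq:excess}, which is at most $R$.
\end{itemize}

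Once the hypothesis holds, \eqref{eq:adjacentcontrol} gives
$\sum_{i\in V}h_i^2\le C_d(\theta/n^2+\sum_i(h_{i+1}-h_i)^2)$.
From the explicit formula for $h_i$ in the proof of \Cref{lem:pathstable}, $h_{i+1}-h_i=O(n^{-5/2}\cdot n^{-1})$, so $\sum_i(h_{i+1}-h_i)^2=O(n^{-4})\le O(\theta/n^2)$, since $\theta\ge n^{-1}$. Then $\sum_V b_i^2\le2\sum_V h_i^2+2\sum(b_i-h_i)^2$ gives \eqref{eq:Venergy}.

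The step I expect to need the most care is matching the index conventions of \Cref{lem:adjacent}. I have to verify that neighbours of $U$-indices really lie only in $U\cup A\cup J$, so that interior indices of a bridge interval never neighbour $U$; this holds because a bridge interval's interior is bracketed by its endpoints. I also have to check that the choice of $c_j$ at the two bridge endpoints matches the orientation in \eqref{eq:bridgeends}. The final sentence of the lemma is immediate, since the indices outside $V$ are exactly those in bridge intervals.
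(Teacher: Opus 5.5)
Your proposal is correct and follows essentially the paper's route. The paper applies \Cref{lem:adjacent} with $p_i=b_i$, so the $U$-terms vanish and the bridge-endpoint terms are exactly those in \eqref{eq:bridgeends}, and it then bounds the variation $\sum_i(b_{i+1}-b_i)^2$ through \eqref{eq:bdifference} and $\sum_i(h_{i+1}-h_i)^2=p_n^2$; your choice $p_i=h_i$ makes the variation term at most $p_n^2$ immediately and spends \eqref{eq:bdifference} on each hypothesis term instead, which is equivalent bookkeeping. One small slip: since $h_i=O(n^{-3/2})$, the pointwise bound is $h_{i+1}-h_i=O(n^{-5/2})$, not $O(n^{-7/2})$, but your conclusion $\sum_i(h_{i+1}-h_i)^2=O(n^{-4})$ is correct, and indeed this sum is at most $p_n^2$.
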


\begin{proof}
	Apply \Cref{lem:adjacent} with $p_i=b_i$ and
	$\mathcal R=R$. On $U$ we have $a_i=b_i$ and
	$q_i\ge\alpha\ge2$. At the first and last indices of a bridge interval,
	take $c_j=\alpha$ and $c_j=1$, respectively. The endpoint errors are
	controlled by \eqref{eq:bridgeends}, while \eqref{eq:localstrict} gives
	\[
	\kappa_d\sum_{i\in A}b_i^2\le R.
	\]
	Every adjacent product involving an index in $U$ occurs in the last sum
	of \eqref{eq:excess}. Hence all the hypotheses of
	\Cref{lem:adjacent} are satisfied, and therefore
	\[
	\sum_{i\in V}b_i^2
	\le
	C_d\left(
	R+\sum_{i=1}^{n-2}(b_{i+1}-b_i)^2
	\right).
	\]
	
	It remains to estimate the second term. 
	Recall that
	\[
	h_i=2\sqrt{\frac2n}\sin\frac{\pi}{2n}
	\sin\frac{\pi i}{n},
	\qquad 1\le i\le n-1.
	\]
	For convenience, set $h_0=h_n=0$, which agrees with the same formula
	at $i=0,n$. Since the sequence $h_i$ is a constant multiple of
	$\sin(\pi i/n)$, we have
	$2h_i-h_{i-1}-h_{i+1}=p_n h_i.$
	Therefore
	\[	\sum_{i=0}^{n-1}(h_{i+1}-h_i)^2
	=p_n\sum_{i=1}^{n-1}h_i^2.	\]
	Also, since $h_i=v_{i+1}-v_i$ and $\bv$ is a unit Fiedler vector of
	$P_n$,
	\[	\sum_{i=1}^{n-1}h_i^2
	=\bv^\top L(P_n)\bv=p_n.	\]
	Hence
	\[	\sum_{i=0}^{n-1}(h_{i+1}-h_i)^2=p_n^2=O(n^{-4}).	\]
	Using \eqref{eq:bdifference},
	\[	\sum_{i=1}^{n-2}(b_{i+1}-b_i)^2
	\le	8\sum_{i=1}^{n-1}(b_i-h_i)^2+2p_n^2
	\le	C_d\frac{\theta}{n^2}.	\]
	Together with \eqref{eq:smallR}, this proves
	\eqref{eq:Venergy}.
	
	Finally, \Cref{lem:pathstable} applies to the set $V$, since
	\eqref{eq:Venergy} gives the required bound on
	$\sum_{i\in V}b_i^2$. Hence
	$|V|\le C_d\theta^{1/3}n=C_d\eta n$, proving
	\eqref{eq:Vsize}.
\end{proof}


\subsection{Consecutive bridges and the induced blocks}
We show that most vertices lie between consecutive bridges and form the
required $L_d$ blocks.

We now use the preceding estimate to locate most vertices between
consecutive bridge cuts. We first show that almost all positions occur
between two consecutive bridges at distance $D$, and then show that
almost all such intervals induce copies of $L_d=K_D-e$.

If $H$ is a subgraph of $G$, we call a vertex of $H$ an
\emph{attachment vertex} if it is incident with an edge having its
other endpoint outside $H$. If $H$ lies between two ordered bridges,
the endpoints of these bridges that belong to $H$ are called the left
and right attachment vertices of $H$.

\begin{lemma}\label{lem:bridgeblocks}
	Let $a_1<\cdots<a_k$ be the positions of all cuts of size one. Then,
	apart from at most $C_d\eta n$ vertices, the graph is covered by the
	sets of $D$ vertices lying between two consecutive cuts of size one
	whose positions differ by $D$.
	
	Moreover, if two such cuts occur at positions $a$ and $a+D$, and
	\[
	T=\{a+1,\ldots,a+D\},
	\]
	then $G[T]$ is either $K_D-e=L_d$ or $K_D$. In the first case, the two
	edges leaving $T$ are bridges and are attached to the two endpoints of
	the missing edge.
\end{lemma}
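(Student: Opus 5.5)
The proof has two independent parts: a counting part, which uses \Cref{lem:exceptionalindices}, and a purely local structural part.

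\emph{Counting.} By \Cref{lem:exceptionalindices}, all but $C_d\eta n$ difference indices lie in selected intervals $B_a=\{a,\ldots,a+D-1\}$ beginning at a cut of size one. I will show that, apart from boundedly many exceptions per defect, each such bridge interval is followed immediately by another cut of size one at position $a+D$. The interval $B_a$ covers the vertices $a+1,\ldots,a+D$. The next index $a+D$ lies either in $U$, in a nonbridge interval, or is the start of another bridge interval; it cannot lie inside a bridge interval, since intervals are disjoint and start at the first small cut of a group. In the last case $q_{a+D}=1$, and $T=\{a+1,\ldots,a+D\}$ lies between consecutive size-one cuts at distance $D$. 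So map each bridge interval whose successor index $a+D$ is not a bridge start to that index. Such an index belongs to $V$, and each index of $V$ is hit at most once. Hence the number of bad bridge intervals is at most $|V|\le C_d\eta n$, and together they cover at most $D\,C_d\eta n$ vertices. The uncovered vertices, namely those whose left difference index lies in $V$ plus the first vertex, number $O_d(\eta n)$. I also need the cuts at $a$ and $a+D$ to be \emph{consecutive} size-one cuts. This holds because \eqref{eq:cutcount} with $t\le d$ and $q_a=1$ forces $q_{a+t}\ge t(D-t)-1\ge d-1\ge2$ for $1\le t\le d$. Adjusting $C_d$ finishes the first assertion.

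\emph{Structure of $T$.} Suppose $q_a=q_{a+D}=1$, and write $T=\{a+1,\ldots,a+D\}$ with $|T|=D=d+1$. Each vertex of $T$ has degree at least $d$. At most one edge leaves $T$ to the left and at most one to the right, so the degree sum inside $G[T]$ is at least $dD-2$. Since $G[T]$ has at most $\binom D2$ edges, at most one edge of $K_D$ is missing.

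If $G[T]=K_D$, we are in the second case.

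Otherwise exactly one edge $uv$ is missing. Then both outgoing edges exist, and $u$ and $v$, which have degree $d-1$ inside $T$, must each receive one of them. Thus $u$ carries the unique edge crossing cut $a$ and $v$ the unique edge crossing cut $a+D$, or the reverse. If one vertex received both, the other would have degree $d-1$, which is impossible. Each outgoing edge is the only edge across a cut of the vertex ordering, so deleting it separates $\{1,\ldots,a\}$ from the rest (respectively $\{1,\ldots,a+D\}$ from the rest). Hence each outgoing edge is a bridge, and $G[T]\cong L_d$ with the bridges attached at the endpoints of the missing edge.

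\emph{Main obstacle.} The delicate step is the counting injection. I must check carefully that the successor index of a bridge interval is never inside another selected interval, and that end effects are harmless: the first and last $d$ cuts are not small, so $a+D\le n-1$ or the interval ends near the boundary, which costs $O_d(1)$ vertices and is absorbed by $n^{-1}\le\theta$. The nonbridge intervals have length $d$ and all lie in $V$, so they are already counted.
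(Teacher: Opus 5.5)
Your proof is correct and follows essentially the same route as the paper. The count comes from \Cref{lem:exceptionalindices}, and your injection $a\mapsto a+D$ of bad bridge intervals into $V$ is just a repackaging of the paper's gap identity \eqref{eq:bridgepacking}. The structure of $G[T]$ comes from the same degree count $2e(T)+2\ge dD$, together with the observation that the two deficient vertices must carry the two cut edges; your ``at most two edges leave $T$'' replaces the paper's preliminary proof that exactly two leave, which is harmless.
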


\begin{proof}
	Each cut of size one has a selected interval of length $D$, and these
	selected intervals are disjoint. Hence the number of difference
	indices outside them is
	\[
	r=n-1-Dk.
	\]
	By \Cref{lem:exceptionalindices}, these indices belong to the
	exceptional set $V$, and therefore
	$r\le C_d\eta n$.
	
	Assume first that $k\ge1$. The positions not contained in the selected
	intervals are counted exactly by
	\begin{equation}\label{eq:bridgepacking}
		r=(a_1-1)
		+\sum_{j=1}^{k-1}(a_{j+1}-a_j-D)
		+(n-a_k-D).
	\end{equation}
	Every term on the right is nonnegative. In particular, every gap with
	$a_{j+1}-a_j>D$ contributes at least one to the middle sum. Hence there
	are at most $r$ such gaps.
	
	Let $s_0$ be the number of indices $j$ for which
	$a_{j+1}-a_j=D$. Then
	$s_0\ge k-1-r$. Since $r=n-1-Dk$, this gives
	\[
	n-Ds_0
	\le n-D(k-1-r)
	=(D+1)(r+1)
	\le C_d\eta n,
	\]
	after increasing $C_d$ if necessary. Thus the $D$-vertex sets lying
	between consecutive bridge cuts at distance $D$ cover all but at most
	$C_d\eta n$ vertices. If $k<2$, the same conclusion follows after
	increasing $C_d$, so we may assume that such a pair of cuts exists.
	
	Now consider two cuts of size one at positions $a$ and $a+D$, and put
	$T=\{a+1,\ldots,a+D\}$. Since $G$ is connected, an edge forming a cut
	of size one is a bridge. The two bridges are distinct. Indeed, if they
	were the same edge, that edge would have one endpoint before $T$ and
	the other after $T$, and no edge would join $T$ to the rest of the
	graph. Similarly, no edge can join a vertex before $T$ directly to a
	vertex after $T$. Hence exactly two edges leave $T$, one through each
	of the two cuts.
	
	Since every vertex has degree at least $d$,
	\[
	2e(T)+2\ge dD.
	\]
	As $D=d+1$, this gives
	\[
	e(T)\ge\frac{dD-2}{2}
	=\binom D2-1.
	\]
	Because $G$ is simple, $e(T)\le\binom D2$. Therefore $G[T]$ is either
	$K_D-e$ or $K_D$.
	
	Suppose $G[T]=K_D-e$. The two endpoints of the missing edge have
	internal degree $D-2=d-1$. Since every vertex has degree at least $d$,
	each of them must have a neighbor outside $T$. There are exactly two
	edges leaving $T$, so these are precisely the two edges leaving the block, one
	at each endpoint of the missing edge. Both are bridges. Thus $G[T]$ is
	a copy of $L_d$ with the required two attachments. Since $L_d$ has
	no cut vertex and its only exits are bridges, this copy is a block
	of $G$.
\end{proof}

\begin{lemma}\label{lem:completegaps}
	Among the intervals in \Cref{lem:bridgeblocks}, those inducing
	$K_D$ contain altogether at most $C_d\eta n$ vertices. Consequently,
	there are vertex-disjoint copies of $L_d$ covering all but
	$C_d\eta n$ vertices of $G$.
\end{lemma}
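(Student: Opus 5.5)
We need to show that bridge intervals inducing $K_D$ cover few vertices. The plan is to show that each such interval leaves a definite positive remainder in the exact decomposition \eqref{eq:excess}, of size comparable to $b_i^2$ on that interval. Then the total energy on those intervals is $O_d(\theta/n^2)$. The counting part of \Cref{lem:pathstable} then bounds their number by $O_d(\eta n)$.

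\textbf{Finding a slack inside the interval.} Fix two size-one cuts at positions $a$ and $a+D$ with $G[T]=K_D$ for $T=\{a+1,\ldots,a+D\}$. The bridge interval is $B_a=\{a,\ldots,a+d\}$. In Case~1 of \Cref{lem:local} we used \eqref{eq:overlap} with $k=1$, i.e.\ $K_{a+s,a+t}\ge s(D-t)-1$. When $G[T]=K_D$, however, the edges inside $T$ alone already give $K_{a+s,a+t}\ge s(D-t)$. The single edge leaving $T$ to the right (through the cut at $a+D$) crosses every cut $a+s$ with $1\le s\le d$. Hence
\[
K_{a+s,a+t}\ge s(D-t)+1\qquad(1\le s\le t\le d).
\]
The $-Z^2$ in \eqref{eq:cliquecomparison} thus becomes $+Z^2$, and we get
\[
Q_B\ge w_0^2+\tfrac{D}{2}Z^2+Z^2\ge w_0^2+\tfrac{D+2}{2}Z^2 .
\]
By weighted Cauchy--Schwarz, $Q_B\ge \Delta_B^2/(1+2/(D+2))=\frac{D+2}{D+4}\Delta_B^2$. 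We must compare this with $\al\Delta_B^2/D$. Since $\frac{D+2}{D+4}-\frac{D-2}{D}>0$ (the difference is $8/(D(D+4))$), we obtain
\[
R_B\ge \kappa'_d\,\Delta_B^2=\kappa'_d D^2 b_a^2\ge\kappa'_d D\sum_{i\in B}b_i^2
\]
for a constant $\kappa'_d>0$. It is cleaner to write $Q_B-\al Dt_B^2\ge 2Dt_B^2\cdot c$ directly, but either form will do. This is the step where I expect care is needed: I must check that the rightward bridge edge is genuinely counted in $K_{a+s,a+t}$ for all $1\le s\le t\le d$, and not only for $t\le d-1$. It lands at a vertex beyond $a+D$, so it crosses all of these cuts. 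This is fine, but it uses that $T$ sits exactly between the two size-one cuts.

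\textbf{Summing and counting.} Let $W$ be the union of the difference intervals $B_a$ whose vertex set induces $K_D$. Summing over these intervals and using \eqref{eq:excess} together with \eqref{eq:smallR}:
\[
\sum_{i\in W}b_i^2\le C_d\sum_B R_B\le C_dR\le C_d\theta/n^2 .
\]
The last assertion of \Cref{lem:pathstable} applies to $A=W$, giving $|W|\le C_d\eta n$, and the number of vertices in these intervals is at most $\frac{D}{D}|W|$ plus $O(1)$ per interval, so again $O_d(\eta n)$. Finally, combine with \Cref{lem:bridgeblocks}. All vertices except $C_d\eta n$ lie in $D$-sets between consecutive size-one cuts at distance $D$. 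Removing the $K_D$ ones leaves sets inducing $L_d$; these are vertex-disjoint and cover all but $C_d\eta n$ vertices after enlarging $C_d$.
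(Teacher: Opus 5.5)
Your overall plan is the paper's: get a definite positive part of $R_B$ on each $K_D$ gap, sum it using the nonnegativity of the terms in \eqref{eq:excess} together with \eqref{eq:smallR}, and apply the counting part of \Cref{lem:pathstable}. However, the step you flagged as needing care is wrong. An edge $uv$ with $u<v$ crosses the cut at $i$ exactly when $u\le i<v$. An edge landing beyond $a+D$ therefore crosses only the cuts from its left endpoint onwards. That left endpoint is the attachment vertex in $T$, and nothing prevents it from sitting at position $a+D$; this is the natural situation when the attachment carries the largest value on $T$. Then the outgoing bridge crosses none of the cuts $a+1,\ldots,a+d$. Likewise, the incoming bridge may enter $T$ at position $a+1$ and cross only the cut at $a$. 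In that configuration only the clique edges and the incoming bridge cross cuts of $B$, so $Q_B=w_0^2+\bv^\top L(K_D)\bv$. This equals $w_0^2+\frac D2Z^2$ when the interior accumulated values are $Z/2$. So both $K_{a+s,a+t}\ge s(D-t)+1$ and $Q_B\ge w_0^2+\frac{D+2}{2}Z^2$ fail in general, the latter whenever $Z>0$.

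The repair is immediate and gives exactly the paper's proof. The clique edges alone give $K_{a+s,a+t}\ge s(D-t)$, which already removes the $-Z^2$ in \eqref{eq:cliquecomparison}. Hence $Q_B\ge w_0^2+\frac D2Z^2\ge\frac{D}{D+2}\Delta_B^2$. Since $\frac{D}{D+2}-\frac{D-2}{D}=\frac{4}{D(D+2)}>0$, this yields $R_B\ge\frac{4}{D+2}\sum_{i\in B}b_i^2$, and the rest of your argument goes through unchanged. In the final count no $O(1)$ correction per interval is needed. Each selected interval has $D$ difference indices and its gap has exactly $D$ vertices, so the number of vertices in $K_D$ gaps equals $|W|$.
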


\begin{proof}
	Consider a gap between two bridge cuts at positions $a$ and $a+D$
	whose $D$ vertices induce $K_D$. Let
	\[
	B=B_a=\{a,\ldots,a+D-1\}
	\]
	be the selected interval beginning at the left bridge. Write
	$w_s=a_{a+s}$ for $0\le s\le d$, and put
	$Z=w_1+\cdots+w_d$. Thus
	$	\Delta_B=w_0+Z.$	
	All internal edges of the induced $K_D$ have both difference indices
	inside $B$, while the incoming bridge contributes $w_0^2$. Applying
	\eqref{eq:cliquesquares} to the clique and dropping the nonnegative
	square terms gives
	\[
	Q_B\ge w_0^2+\frac D2Z^2.
	\]
	By weighted Cauchy--Schwarz, $w_0^2+DZ^2/2\ge D(w_0+Z)^2/(D+2).$
	Hence
	\[	Q_B\ge\frac{D}{D+2}\Delta_B^2.\]
	Since $\y$ is obtained by linear interpolation on $B$, all the
	differences $b_i$, $i\in B$, are equal to $\Delta_B/D$. Therefore
	\[
	\sum_{i\in B}b_i^2
	=D\left(\frac{\Delta_B}{D}\right)^2
	=\frac{\Delta_B^2}{D}.
	\]
	Recall that
	\[
	R_B=Q_B-\al\frac{\Delta_B^2}{D}.
	\]
	Using the lower bound for $Q_B$, we obtain
	\begin{align}
		R_B
		&\ge
		\left(\frac{D}{D+2}-\frac{\al}{D}\right)\Delta_B^2 \notag\\
		&=
		\left(\frac{D^2}{D+2}-\al\right)
		\sum_{i\in B}b_i^2.
	\end{align}
	Since $\al=D-2$, this becomes
	\begin{equation}\label{eq:completegap}
		R_B\ge\frac4{D+2}\sum_{i\in B}b_i^2.
	\end{equation}
	
	Let $\mathcal B_0$ be the family of selected intervals corresponding
	to the $K_D$ gaps, and put
	$A_0=\bigcup_{B\in\mathcal B_0}B.$
	The intervals in $\mathcal B_0$ are disjoint. Summing
	\eqref{eq:completegap} over $B\in\mathcal B_0$ therefore gives
	\[	\frac4{D+2}\sum_{i\in A_0}b_i^2
	\le \sum_{B\in\mathcal B_0}R_B.	\]
	By \eqref{eq:excess}, all terms in the decomposition of $R$ are
	nonnegative, and hence
$\sum_{B\in\mathcal B_0}R_B
	\le \sum_B R_B
	\le R.$
	Consequently, by \eqref{eq:smallR},
	\[	\sum_{i\in A_0}b_i^2
	\le \frac{D+2}{4}R
	\le C_d\frac{\theta}{n^2}.	\]
		\Cref{lem:pathstable}, applied with $A=A_0$, now gives
	\[
	|A_0|\le C_d\theta^{1/3}n=C_d\eta n.
	\]
	Each selected interval in $A_0$ contains $D$ difference indices, and
	the corresponding $K_D$ gap contains exactly $D$ vertices. Hence the
	vertices lying in all $K_D$ gaps number at most $C_d\eta n$.
	
	By \Cref{lem:bridgeblocks}, apart from at most $C_d\eta n$
	vertices, every vertex lies in a gap between consecutive bridge cuts
	at distance $D$. Each such gap induces either $K_D$ or $L_d=K_D-e$.
	We have just shown that the $K_D$ gaps contain only
	$C_d\eta n$ vertices. Therefore, after increasing $C_d$ if necessary,
	the remaining $L_d$ blocks cover all but $C_d\eta n$ vertices.
\end{proof}

\begin{proof}[Proof of \Cref{thm:stability}]
	If $\theta=\e+n^{-1}$ is bounded away from zero, both numerical
	bounds are trivial after increasing $C_d$; take the selected family
	to be empty. We may therefore assume $\theta$ is sufficiently small
	that the coverage estimate below gives at least one selected block.
	By \Cref{lem:completegaps}, there are vertex-disjoint copies of
	$L_d$ covering all but $C_d\eta n$ vertices. Since
	$\eta=(\e+n^{-1})^{1/3}$, this proves \eqref{eq:coverage}.
	
	It remains to describe how these blocks are arranged. The cuts at
	$a_1,\ldots,a_k$ are nested because they are cuts between the prefixes
	$\{1,\ldots,a_j\}$ and their complements. Moreover, both sides of every
	cut of size one are connected. Indeed, if one side had two components,
	each component would need an edge to the other side in order for $G$ to
	be connected, giving at least two edges in the cut.
	
	Hence the bridge cuts occur in one linear order. Every selected
	$L_d$ block lies between two consecutive bridges in this order.
	By \Cref{lem:bridgeblocks}, exactly two edges leave such a block,
	one at each endpoint of its missing edge, and both are bridges.
	Therefore the selected blocks occur along chains of consecutive
	bridges, with no additional edges leaving a selected block.
	
	Finally, let $s_1$ be the number of selected $L_d$ blocks. Since they
	cover all but $C_d\eta n$ vertices,
	\[
	Ds_1\ge n-C_d\eta n.
	\]
	Consider a path from the left attachment vertex of the first selected
	block to the right attachment vertex of the last. In every copy of
	$L_d$, the two attachment vertices are the endpoints of the missing
	edge and are therefore nonadjacent. Any path through the block needs
	at least two internal edges. Between two consecutive blocks, at least
	one further edge is needed. Thus the path has length at least
	$2s_1+(s_1-1)=3s_1-1$.
	
	Consequently,
	\[
	\operatorname{diam}(G)
	\ge 3s_1-1
	\ge \frac{3n}{D}-C_d\eta n.
	\]
	Since $D=d+1$, this is \eqref{eq:diamstable}. The proof of
	\Cref{thm:stability} is complete.
\end{proof}


\section{The structure of \texorpdfstring{$\mu$}{mu}-minimal graphs}\label{sec:GMfull}
In this section, we prove \Cref{thm:GMfull} in the minimum-degree
and odd-regular classes. By \Cref{lem:comparisonupper} and exact
minimality, a minimizing graph satisfies \eqref{eq:comparisonerror}.
Applying \Cref{thm:stability} leaves $o(n)$ vertices outside the
prescribed blocks. We first show that every consecutive exceptional
subgraph has bounded order, then that these subgraphs lie near the
ends, and finally that the whole block-tree is a path.

The regular case uses comparison graphs with the prescribed degrees.
A $d$-regular minimizer need not minimize algebraic connectivity in
the larger class $\delta\ge d$, so its admissible replacements and
switches are checked separately.

Fix $d\ge3$ and choose the class $\delta\ge d$, or, when $d$ is odd,
the $d$-regular class. A minimizer in this section means an exact
minimizer in the chosen class. Every change made in the regular class
preserves all degrees. The minimum-degree argument also covers every
minimizer subject to $\delta=d$. Indeed, start with a graph of minimum
degree greater than $d$ and delete edges on cycles until its minimum
degree first becomes $d$. Before that stage a cycle exists, since
the minimum degree is at least $d+1$. Deleting a cycle edge preserves
connectedness and does not increase $\mu$. Thus the classes
$\delta=d$ and $\delta\ge d$ have the same minimum at each order,
and every minimizer in the former is a minimizer in the latter.


For a subgraph $H$, write
\[\E_H(\x)=\sum_{uv\in E(H)}(x_u-x_v)^2.\]
In particular, $\E_G(\x)=\x^\top L(G)\x$. In the comparisons below,
the trial vector need not have mean zero, so we use
\eqref{eq:centeredRayleigh} in the form
\[\mu(G)\le\frac{\E_G(\x)}{\|\x-\bar x\1\|^2}.\]
Thus the local changes will be measured through the corresponding
quantities $\E_H(\x)$, while the full numerator is $\E_G(\x)$.
A replacement is called \emph{admissible} if, after the indicated
boundary edges are reattached, the resulting graph is simple,
connected, has the same order, and belongs to the chosen degree class.

\subsection{An ordered bridge and its endpoints}
We show where the endpoints of a bridge lie in the vertex order.
This will help us compare the pieces on either side.


Let $\x$ be a mean-zero Fiedler vector, with its components ordered as
$x_1\le\cdots\le x_n$. If a cut between two consecutive positions has
exactly one crossing edge, we call this edge an {\em ordered bridge}.
Both sides of such a cut are connected: otherwise one side would
require at least two outgoing edges.
Suppose the cut is at position $i$, and let $uv$ be its unique crossing
edge, where $u\le i<v$. Summing the eigen-equations over
$\{1,\ldots,i\}$ gives
\begin{equation}\label{eq:prefixsum}
	x_v-x_u
	=-\mu\sum_{j=1}^i x_j
	=\mu f_i,\qquad f_i=-\sum_{j=1}^i x_j.
\end{equation}
By \eqref{eq:AFprefix}, every proper prefix has negative sum, so $f_i>0$. Hence
$x_v-x_u>0$.

We use the usual two-edge switch, also used in
\cite[Section~3.1]{AbGhIm}. For distinct vertices $u,v,w,z$,
replacing $uv,wz$ by $wv,uz$ preserves every degree. If the added
edges are absent, the resulting graph is simple. Its change in the quadratic form is
\begin{equation}\label{eq:switch}
 (x_w-x_v)^2+(x_u-x_z)^2-(x_u-x_v)^2-(x_w-x_z)^2
       =2(x_w-x_u)(x_z-x_v).
\end{equation}
Connectedness is checked each time this operation is used.

\begin{lemma}\label{gm:lem:bridgeextrema}
Let $uv$ be an ordered bridge of a minimizer, with $u$ on its left
side $S$ and $v$ on its right side. Then
\[
 x_u=\max_{w\in S}x_w,\qquad x_v=\min_{w\notin S}x_w.
\]
In particular, if the cut is after position $i$, then
$x_i=x_u<x_v=x_{i+1}$.
\end{lemma}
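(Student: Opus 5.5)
Since $S$ is the prefix $\{1,\ldots,i\}$ in the Fiedler order, every $w\in S$ satisfies $x_w\le x_i\le x_{i+1}\le x_{w'}$ for every $w'\notin S$. Hence it suffices to show that $x_u=x_i$ and $x_v=x_{i+1}$, that is, that $u$ carries a largest value on $S$ and $v$ a smallest value on the complement. By symmetry (replace $\x$ by $-\x$ and reverse the order), it is enough to treat $u$. The strict inequality $x_i<x_{i+1}$ will then follow from \eqref{eq:prefixsum}, because $x_v-x_u=\mu f_i>0$.

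Suppose, for contradiction, that some $w\in S$ has $x_w>x_u$. The plan is to move the bridge endpoint from $u$ to $w$ and show that this lowers $\mu$, contradicting minimality. The difficulty is that simply replacing the edge $uv$ by $wv$ changes the degrees of $u$ and $w$. I would therefore use the two-edge switch \eqref{eq:switch}. Choose a neighbour $z$ of $w$ inside $S$ with $z\ne u$ and $zu\notin E$, and replace $uv,wz$ by $wv,uz$.

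The change in the quadratic form is $2(x_w-x_u)(x_z-x_v)$. Here $x_w-x_u>0$, and $x_z-x_v<0$ because $x_z\le x_i<x_v$. So $\x$ has strictly smaller Rayleigh quotient in the new graph $G'$, and the new graph keeps all degrees. Moreover, $\x$ remains mean zero and of unit norm, so \eqref{eq:Rayleigh} gives $\mu(G')<\mu(G)$.

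Connectivity of $G'$ must be checked. The only crossing edge is now $wv$, so the right side stays connected and is attached to $S$. The switched edge $wz\to uz$ could disconnect $S$, but $u$ is joined to $z$ and $w$ is joined to $v$. I would argue that every vertex of $S$ still reaches $u$ or $w$, so the graph remains connected unless $wz$ was a bridge inside $S$. In that case I would choose $z$ on a cycle through $w$, or exploit that $G[S]$ has minimum degree at least $d-1\ge2$ away from $u$.

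The main obstacle is finding a valid $z$: it must be adjacent to $w$, not adjacent to $u$, and not equal to $u$, and the switch must preserve connectedness. I would first try a counting argument. The vertex $w$ has at least $d$ neighbours, all in $S$ unless $w=u$, which is excluded. If every neighbour of $w$ other than $u$ were adjacent to $u$, then $N(w)\subseteq N[u]$. In that situation I would instead switch with an edge at $u$. Alternatively, use the degenerate variant in which $wu\in E$: then replacing $uv$ by $wv$ and adjusting via the eigen-equation at $u$ and $w$ compares $\mu x_u=\sum_{j\sim u}(x_u-x_j)$ with the same quantity at $w$, showing $x_u\ge x_w$ directly.

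A cleaner fallback, which I would attempt if the switch bookkeeping becomes messy, is purely spectral. Consider the eigen-equation on the component structure of $S$ and apply the maximum principle. Let $M=\max_S x$, attained at $w\ne u$. At $w$, all neighbours lie in $S$, so $\mu x_w=\sum_{j\sim w}(x_w-x_j)\ge0$, with equality forcing all neighbours to equal $M$. Since $x_w\le x_i<0$ is not guaranteed, this works only when the values are suitably signed. So the switch argument remains the primary route, with these alternatives as the backup.
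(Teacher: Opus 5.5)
Your core move is the paper's: assume some $w\in S$ has $x_w>x_u$ and apply the two-edge switch \eqref{eq:switch}, replacing $uv,wz$ by $wv,uz$. However, the three points you leave open or get wrong are where the actual work lies, so the proposal has genuine gaps.

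\textbf{Strictness.} You claim $x_z-x_v<0$ because $x_z\le x_i<x_v$. But $x_i<x_{i+1}$ is part of what the lemma asserts. A priori, \eqref{eq:prefixsum} gives only $x_u<x_v$, so $x_z=x_i=x_{i+1}=x_v$ is possible, and then the change $2(x_w-x_u)(x_z-x_v)$ is zero. You need the equality case. The vector $\x$ still has Rayleigh quotient $\mu$ on $G'$, but $(L(G')\x)_v-(L(G)\x)_v=x_u-x_w\ne0$. So $\x$ is not an eigenvector of $L(G')$, and hence $\mu(G')<\mu$.

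\textbf{Existence of $z$.} When $\deg(u)>d$, nothing prevents $N(w)\subseteq N[u]$, so no valid $z$ need exist. Your fallback ``switch with an edge at $u$'' is not an argument. The clean dichotomy is as follows.
\begin{itemize}
\item If $\deg(u)>d$, which can happen only in the minimum-degree class, simply replace $uv$ by $wv$. The degree class and connectivity are preserved, and the form drops strictly since $x_u<x_w\le x_v$.
\item If $\deg(u)=d$, then $u$ has exactly $d-1$ neighbours in $S$, while all of the at least $d$ neighbours of $w$ lie in $S$. Counting, separately for $uw\in E$ and $uw\notin E$, gives $z\in N(w)\cap S$ with $z\ne u$ and $uz\notin E$.
\end{itemize}

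\textbf{Connectivity.} The right requirement is not that $wz$ lie on a cycle. It is that $u$ and $w$ stay in the same component of $G[S]-wz$. If $u$ fell on $z$'s side, that side would lose both $uv$ and $wz$ and gain only the internal edge $uz$, so it would be cut off. A cycle through $w$ need not exist, since every edge at $w$ may be a bridge of $G[S]$. Minimum degree at least $2$ does not help either. The paper meets this requirement together with $uz\notin E$, in three cases.
\begin{itemize}
\item If $uw\in E$, any admissible $z\ne u$ keeps the edge $uw$.
\item If $u,w$ are nonadjacent but have a common neighbour $c$, choose $z\in N(w)\setminus N(u)$. Such a $z$ exists by counting, and it avoids $c$, so the path through $c$ survives.
\item If they have no common neighbour, every neighbour of $w$ is eligible. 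Avoid the first vertex of a shortest $w$--$u$ path in $G[S]$, which exists because $S$ is connected.
\end{itemize}
Your eigen-equation and maximum-principle fallbacks do not close any of these gaps. As you note yourself, the latter fails without sign information.
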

\begin{proof}
Suppose $w\in S$ has $x_w>x_u$. If $\deg(u)>d$, replace $uv$ by
$wv$. This case occurs only in the minimum-degree class. The new
graph is simple, connected and has minimum degree at least $d$.
Since $x_u<x_w\le x_v$, its quadratic form strictly decreases.

It remains that $\deg(u)=d$. We choose
$z\in N(w)\cap S$, distinct from $u$, with $uz\notin E(G)$, so
that $u,w$ remain connected in $G[S]-wz$.
If $uw$ is an edge, $w$ has at least $d-1$ further neighbours in
$S$, whereas $u$ has only $d-2$ neighbours there other than $w$.
The required $z$ exists and $uw$ is retained.
If $u,w$ are nonadjacent but have a common neighbour, their degrees
within $S$ are $d-1$ and at least $d$. Choose
$z\in N(w)\setminus N(u)$; the path through a common neighbour
is retained. Finally, if there is no common neighbour, every
neighbour of $w$ is eligible. Choose one other than the first
neighbour on a shortest $w$--$u$ path in $S$.

Replace $uv,wz$ by $wv,uz$. Both added edges are absent and all
degrees are unchanged. If deleting $wz$ disconnects $S$, its
component containing $z$ is reattached by $uz$, while the component
containing $u,w$ is joined to the right side by $wv$.
Thus the new graph is connected. Equation~\eqref{eq:switch}
gives a nonpositive change in the quadratic form, since $x_w>x_u$ and $x_z\le x_v$.
A strict inequality contradicts minimality. In the equality case
the eigen-equation at $v$ changes by $x_u-x_w\ne0$, so equality
in the Rayleigh principle is impossible as well. Reversal proves
the other assertion, and \eqref{eq:prefixsum} gives strictness.
\end{proof}


The vertices between two consecutive ordered bridges form a connected subgraph, and exactly two edges leave it, namely the two boundary bridges. These two bridges are different; otherwise one edge would cross both boundary cuts, leaving the vertices between them disconnected from the rest of the graph.

\begin{lemma}\label{gm:lem:attachments}
An interior portion bounded by two ordered bridges of a minimizer
has distinct attachment vertices.
\end{lemma}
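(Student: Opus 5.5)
The plan is to argue by contradiction, using the extremal property of ordered bridges from \Cref{gm:lem:bridgeextrema}. Let $H$ be an interior portion between ordered bridges at cut positions $i<j$. Write $u'w$ for the left bridge and $w'v'$ for the right bridge, with $w,w'\in V(H)$, and suppose $w=w'$.

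\textbf{Step 1: $\x$ is constant on $H$.} Applying \Cref{gm:lem:bridgeextrema} to the left bridge gives that $x_w$ is the minimum of $\x$ on the right side of cut $i$, and this side contains $H$. Applying it to the right bridge gives that $x_w$ is the maximum of $\x$ on the left side of cut $j$, which also contains $H$. Hence $\x$ takes the single value $x_w$ on $H$. By the same lemma and \eqref{eq:prefixsum}, $x_{u'}<x_w<x_{v'}$. In particular the bridge $wv'$ has nonzero difference.

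\textbf{Step 2: a modification with unchanged Rayleigh quotient.} Every vertex of $H$ other than $w$ has all its neighbours in $H$, since only the two bridges leave $H$. So $|H|\ge d+1$. I will move the right bridge off $w$ while keeping the quadratic form unchanged.

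\emph{Case (a).} Suppose there are $z\in N(w)\cap H$ and $y\in N(z)\cap H$ with $y\ne w$ and $wy\notin E(G)$. Replace $wv',zy$ by $zv',wy$. This is a degree-preserving switch, so it is admissible in both classes.
\begin{itemize}
\item Connectivity: if deleting $zy$ separates $y$ from $z$, then $y$'s side is reattached through $wy$. The side of $z$ still contains $w$, through the edge $zw$.
\item Quadratic form: by \eqref{eq:switch}, or directly, the change is $(x_z-x_{v'})^2-(x_w-x_{v'})^2+(x_w-x_y)^2-(x_z-x_y)^2=0$, because $\x$ is constant on $H$.
\end{itemize}

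\emph{Case (b).} Otherwise, $N[w]\cap H$ is closed under taking neighbours inside $H$. Since $H$ is connected, this means $H=N[w]\cap H$. Then $\deg_G(w)=|H|-1+2\ge d+2$.
\begin{itemize}
\item In the regular class this is impossible.
\item In the minimum-degree class, replace $wv'$ by $zv'$ for any $z\in H\setminus\{w\}$. Then $w$ keeps degree at least $d+1$, the graph stays connected, and the quadratic form is again unchanged.
\end{itemize}

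\textbf{Step 3: the contradiction.} In both cases the new graph $G'$ has the same order, and the same mean-zero unit vector $\x$ has Rayleigh quotient $\mu$. However, the eigen-equation \eqref{eq:eigenequation} at $w$ fails in $G'$. Indeed, $w$ lost the neighbour $v'$, whose term $x_w-x_{v'}$ is nonzero. In case (a) it gained $y$, whose term $x_w-x_y$ is zero; in case (b) it gained nothing. By the remark after \eqref{eq:strictcomparison}, $\mu(G')<\mu(G)$, which contradicts minimality.

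\textbf{Expected obstacle.} The delicate point is Step 2: finding a switch that preserves degrees, simplicity and connectivity at the same time. This is why the case split on whether $N[w]\cap H$ is closed under taking neighbours inside $H$ is needed, with the degree count $|H|\ge d+1$ ruling out case (b) in the regular class.
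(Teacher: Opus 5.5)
Your proof is correct and is essentially the paper's argument. You get constancy of $\x$ on $H$ from \Cref{gm:lem:bridgeextrema}, then use the same degree-preserving switch (your replacement of $wv',zy$ by $zv',wy$ is the paper's replacement of $rv,wz$ by $wv,rz$), which leaves the quadratic form unchanged but breaks an eigen-equation. The remaining differences are cosmetic: the paper splits on $\deg(r)>d$ versus $\deg(r)=d$ and finds the switch partner by counting neighbours, whereas you split on whether such a partner exists and show its absence forces $\deg(w)\ge d+2$; and by checking the eigen-equation at the vertex that loses the bridge, you do not need the paper's extra step $\x_H=\0$.
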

\begin{proof}
Suppose both bridges meet the same vertex $r$ of the portion $H$.
\Cref{gm:lem:bridgeextrema} makes every component on $H$
equal to $x_r$. There is another vertex in $H$, since otherwise
$r$ has degree two. Its eigen-equation gives $\mu x_r=0$, so
$\x_H=\0$. The two outside attachment values have opposite strict
signs. Write $rv$ for the right bridge, with $x_v>0$, and choose
$w\in N_H(r)$.

If $\deg(r)>d$, replace $rv$ by $wv$. This preserves simplicity,
connectedness and minimum degree. It occurs only in the
minimum-degree class. The quadratic form of $\x$ is unchanged, but its
eigen-equation at $w$ fails, a contradiction.

If $\deg(r)=d$, then
$|N_H(w)\setminus\{r\}|\ge d-1$, whereas
$|N_H(r)\setminus\{w\}|=d-3$. Choose
\[
 z\in N_H(w)\setminus\bigl(N_H(r)\cup\{r\}\bigr)
\]
and replace $rv,wz$ by $wv,rz$. Both added edges are absent and
every degree is preserved. The edge $rw$ is retained. If deleting
$wz$ disconnects $H$, the component containing $z$ is reattached
to the component containing $r,w$ by $rz$. The right outside part
is attached by $wv$. Thus the new graph is connected.

All components on $H$ are zero, so the mean, norm and quadratic form of
$\x$ are unchanged. At $w$, however, the corresponding component of the new Laplacian applied to $\x$ is
$-x_v\ne0=\mu x_w$. Equality in the Rayleigh principle is impossible,
again contradicting minimality.
\end{proof}


Now that the attachment vertices are known to be distinct, we may
permute tied components so that the left and right attachments of
each interior portion come first and last within that portion.
By \Cref{gm:lem:bridgeextrema}, no ordered bridge cut lies inside a
set of tied components, before or after this permutation. Thus the
positions of all ordered bridges are unchanged.

\subsection{Effective resistance between two vertices}
We find a bound for comparing graph pieces that have the same two
attachment vertices.

For a connected graph $H$ and distinct vertices $s,t$, write
\[ R_H(s,t)=\left(\min_{z_s=0,z_t=1}\E_H(\z)\right)^{-1},\]
which is indeed the \emph{effective resistance} between $s$ and $t$.
Equivalently, $R_H(s,t)$ is the smallest constant $R$ such that
$ (z_t-z_s)^2\le R\,\E_H(\z) $
for every real vector $\z$ on $V(H)$.
In this subsection we estimate the effective resistance between two specified
vertices of a graph. These estimates will later be used to compare different
parts of the graph while keeping the same two attachment vertices.

Put $D=d+1$, $\alpha=d-1$, and $L_d=K_D-e$.

For disjoint nonempty vertex sets $A,B$, write $\bar z_A,\bar z_B$
for their average values. If all edges between the two sets are present,
their contribution is
\begin{align}\label{eq:clique-averaging}
 \sum_{u\in A,\,v\in B}(z_u-z_v)^2
 ={}&|A||B|(\bar z_A-\bar z_B)^2\notag\\
 &+|B|\sum_{u\in A}(z_u-\bar z_A)^2
  +|A|\sum_{v\in B}(z_v-\bar z_B)^2.
\end{align}
Indeed, expand $z_u-z_v$ about the two averages; the mixed sums
vanish. Consequently, averaging within a clique of vertices with
the same outside neighbors cannot increase the quadratic form.
The displayed remainders also record the equality conditions.

For $L_d$, the two singleton endpoints have the same $d-1$ neighbors.
With endpoint values $0,\Delta$, averaging on that middle clique and
then applying \eqref{eq:weighted-squares} to its two sets of
$d-1$ connecting edges gives the unique minimizing middle value
$\Delta/2$ and minimum $(d-1)\Delta^2/2$. Thus
$R_{L_d}=2/(d-1)$. This is an instance of the Dirichlet principle;
see \cite[Exercise~1.3.11]{DoyleSnell}.

\begin{theorem}\label{gm:thm:terminal}
Let $H$ be a connected  graph of order $m$, with distinct
specified vertices $s,t$. Suppose $\deg_H(s),\deg_H(t)\ge d-1$ and
every other vertex has degree at least $d$. Then
\begin{equation}\label{gm:eq:terminal}
 R_H(s,t)\le\frac{m}{d-1}-1.
\end{equation}
Equality holds precisely when $m$ is a multiple of $D$ and $H$ is a chain of copies of
$L_d$, joined by bridges, with $s,t$ the two unattached missing-edge endpoints.
\end{theorem}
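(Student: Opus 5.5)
Write $\phi=\z$ for the unit potential ($z_s=0$, $z_t=1$) minimizing $\E_H$, so that $R=R_H(s,t)=1/\E_H(\phi)$. I would prove \eqref{gm:eq:terminal} by induction on the block structure between $s$ and $t$, using series composition of resistances along the $s$--$t$ path of the block-tree. If $H$ has a cut vertex $c$ separating $s$ from $t$, then $R_H(s,t)$ is the sum of the resistances of the two pieces. Each piece $H_1\ni s$ and $H_2\ni t$ has $c$ as a terminal with $\deg_{H_i}(c)$ possibly as small as $1$, so the statement as given does not apply directly. Hence I would first prove a version for the \emph{irreducible} case and then handle series composition by counting.

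The key step is a bound for a 2-connected piece, or more generally for a graph with no cut vertex separating $s$ from $t$. Here I would use the Thomson/Nash-Williams cut bound in reverse, i.e.\ the ordering argument already developed in \Cref{sec:numerical}. Order the vertices by $\phi$. By the maximum principle $s$ comes first and $t$ last, and the $a_i$ are the consecutive differences summing to $1$. Apply \eqref{eq:full} together with the cut counts of \Cref{lem:counts}, adapted to terminals of degree $d-1$: the only changes are at the two end cuts, where $q_1\ge d-1$ and $q_{m-1}\ge d-1$. Interpolate as in \Cref{lem:pathcomparison} and use \Cref{lem:local}. This should give $\E_H(\phi)\ge \alpha\sum b_i^2$, where now the bridge intervals of length $D$ play the role of ``$L_d$ units''. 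Instead of comparing with $\sum b_i^2\ge 1/(m-1)$, I would group the differences: each bridge interval contributes at least $\alpha\Delta_B^2/D$, each nonbridge interval at least $(\alpha+\kappa_d)\Delta_B^2/d$, and each uncovered index at least $\alpha a_i^2$. By weighted Cauchy--Schwarz \eqref{eq:weighted-squares}, this yields $\E_H\ge 1/R^*$ with $R^*=\sum_B |B|/\alpha$ (bridge) $+\sum d/(\alpha+\kappa_d)+\#U/\alpha$. Since the total number of difference indices is $m-1$, we get $R^*\le (m-1)/\alpha$. This bound is too weak by roughly $1-1/\alpha$. I would recover the missing $-1$ from the two ends. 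The first $d$ cuts satisfy $q_i\ge i(D-i)-1$, which is much larger than $\alpha$ for $2\le i\le d-1$, so the first and last $d$ indices are cheaper than $1/\alpha$ each. Summing the reciprocals of these end cut sizes should save at least $1-\frac{m}{\alpha}+\frac{m-1}{\alpha}$, that is, at least $1-1/\alpha$ in total. The precise bookkeeping is to show that the two end windows, each of $d$ indices together with the adjacent bridge, have total resistance at most $2/\alpha$ rather than $(2d+1)/\alpha$ less the required savings.

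For equality, I would trace the equality cases. Equality in \eqref{eq:weighted-squares} forces every index to lie in a bridge interval, and forces exact equality in \eqref{eq:cliquesquares} and \eqref{eq:overlap}. Every bridge window then induces exactly $L_d$ by the counting in \Cref{lem:bridgeblocks}, with $e(T)=\binom D2-1$ and no $K_D$, as in \Cref{lem:completegaps}, whose strict loss \eqref{eq:completegap} rules out $K_D$. This forces $m\equiv0\pmod D$ and gives the $L_d$ chain; conversely, for the chain, series composition gives $R=\frac{m}{D}\cdot\frac{2}{\alpha}+(\frac mD-1)$, which equals $\frac m\alpha-1$ when $D=d+1$.

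The main obstacle is the end bookkeeping in the inequality. A cleaner route might avoid the ordering entirely and proceed by direct induction on $m$. Take the block $H_1$ containing $s$ on the block path. If it is a bridge $sc$, then $\deg(c)$ drops to at least $d-1$ in the remainder, which has resistance at most $(m-1)/\alpha-1$ by induction; the total is $(m-1)/\alpha$, exceeding the target, so bridges at $s$ must be excluded by the degree condition, which forces $|H_1|\ge d$. Otherwise $H_1$ is 2-connected, and I would bound $R_{H_1}(s,c)\le (|H_1|-1)/\alpha-1+\text{(slack)}$ by a dense-graph argument. I would try this induction first and fall back on the ordered-cut method for the 2-connected base case.
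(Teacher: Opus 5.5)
\textbf{The gap: how the missing $1-1/\alpha$ is recovered.} Your framework is the paper's: ordering by the minimizing potential, \eqref{eq:full}, the cut counts of \Cref{lem:counts}, the windows \eqref{eq:windows} with \Cref{lem:local}, and weighted Cauchy--Schwarz. You correctly find that it only gives $R\le(m-1)/\alpha$, which is short by $1-1/\alpha$. The problem is your mechanism for recovering that deficit.

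Summing reciprocals of the end cut sizes keeps only the diagonal terms $q_ia_i^2$ on the end windows, and this is quantitatively too weak.
\begin{itemize}
\item For $d=3$ one has $q_1,q_2,q_3\ge2,3,2$. A window of $d$ end differences then contributes resistance at most $\frac12+\frac13+\frac12=\frac43$ instead of $\frac32$. That saves $\frac16$ per end and $\frac13<\frac12$ in total.
\item The shortfall persists up to $d=6$: you get $\frac8{15}<\frac23$ for $d=4$ and $\frac{392}{495}<\frac45$ for $d=6$.
\end{itemize}

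The two-ended accounting is also inconsistent with the windows, which run rightward from each small cut. In the extremal chain, the window at the last bridge cut $m-1-d$ contains all of the last $d$ differences. It is already sharp at $D/\alpha$, i.e.\ $1/\alpha$ per index. If you separate those $d$ indices, the bridge is left uncovered with $q=1<\alpha$. It then costs $1$ instead of $1/\alpha$, which at best cancels any saving there. So the whole $1-1/\alpha$ must be extracted, sharply, from the initial window alone.

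\textbf{The missing idea: keep the cross terms on the initial window.} Among the first $u\le d$ vertices only $s$ can be deficient. Hence $K_{uv}\ge u(D-v)-1$ for $1\le u\le v\le d$. Summing against $a_ua_v$ as in \eqref{eq:cliquecomparison} gives
$Q_0\ge\bv^\top L(K_D)\bv-\Delta_0^2\ge(\frac D2-1)\Delta_0^2=\frac\alpha2\Delta_0^2$
by \eqref{eq:cliquesquares}. So the first $d$ differences have resistance $2/\alpha$, saving exactly $(d-2)/\alpha=1-1/\alpha$. The remaining $m-1-d$ differences cost at most $1/\alpha$ each. The total is $2/\alpha+(m-1-d)/\alpha=m/\alpha-1$, with no special treatment of $t$.

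\textbf{Equality case.} Equality in weighted Cauchy--Schwarz by itself only fixes the total difference of each piece. You also need:
\begin{itemize}
\item strictness of the nonbridge windows;
\item a positive discarded adjacent term from \eqref{eq:neighborcount} at any uncovered index;
\item the $-\Delta_0^2$ correction, to exclude $K_D$ in the initial group.
\end{itemize}

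\textbf{The induction fallback does not close the gap.}
\begin{itemize}
\item The series step needs a version of the statement for terminals of degree below $d-1$, which you never formulate.
\item The claim that the degree condition excludes a bridge at $s$ fails when $s$ also lies in blocks off the $s$--$t$ block path.
\item The 2-connected base case is left to an unspecified argument.
\end{itemize}
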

\begin{proof}
The degree assumptions imply $m\ge D$. Choose the minimizing vector, order its components
increasingly, and put $s$ first and $t$ last. At a vertex other than the two specified
vertices, a minimizing component is the average of its neighbours. Thus all components lie
between the two specified values. 
With $a_i=z_{i+1}-z_i$, equation~\eqref{eq:full} gives
\[\E_H(\z)=\sum_{i,j}K_{ij}a_i a_j,\] where $K_{ij}$ is the number of edges of $H$ crossing both cuts
at positions $i$ and $j$.


Put $\Delta_0=\sum_{i=1}^d a_i$, and set $v_j=z_j-z_s$ for
$1\le j\le D$. Define $Q_0=\sum_{i,j=1}^dK_{ij}a_i a_j$.
For $1\le u\le v\le d$, counting edges from the first $u$ vertices
gives
\[
 K_{uv}\ge du-1-u(u-1)-u(v-u)=u(D-v)-1.
\]
Only $s$ can contribute a degree deficiency in this set; the other
specified vertex is last. Thus the same comparison as in
\eqref{eq:cliquecomparison} gives
\[Q_0\ge \bv^\top L(K_D)\bv-\Delta_0^2.\]
By \eqref{eq:cliquesquares},
$\bv^\top L(K_D)\bv\ge D\Delta_0^2/2$. Hence
\[Q_0\ge
\left(\frac D2-1\right)\Delta_0^2
=\frac{\al}{2}\Delta_0^2.\]
We also need a lower bound on the cuts near the two ends. For
$1\le i\le d$, degree counting gives
$q_i\ge i(D-i)-1$. Since the minimum of $i(D-i)-1$ on this range is
attained at $i=1$ or $i=d$, we have
$q_i\ge D-2=\al.$
The same argument, applied from the other end, gives the same bound
for the last $d$ cuts.

For the remaining cuts use the intervals of \eqref{eq:windows}.
The vertices counted between two internal cuts exclude $s,t$, so
\eqref{eq:cutcount} and \eqref{eq:overlap} hold there without a degree
deficiency. The endpoint cut bounds above ensure that the selected
intervals are disjoint from the first $d$ differences and fit before
the final endpoint. \Cref{lem:local} consequently applies without
change. Thus a bridge interval has $Q\ge\alpha\Delta^2/D$, another
selected interval has $Q\ge(\alpha+\kappa_d)\Delta^2/d$, and every
uncovered cut has size at least $\alpha$. We shall also use the
uncompleted bridge estimate from that proof:
\begin{equation}\label{gm:eq:terminalbridge}
 Q\ge a_a^2+\frac\alpha2 Z^2+2\sum_j(v_j-Z/2)^2,
 \qquad Z=a_{a+1}+\cdots+a_{a+d},
\end{equation}
where the $v_j$ are the interior accumulated differences.

Keeping all the indicated terms and applying weighted Cauchy--Schwarz gives
\[
 \E_H(\z)\ge\frac{(z_t-z_s)^2}{2/\alpha+(m-1-d)/\alpha}
 =\frac{(z_t-z_s)^2}{m/\alpha-1},
\]
which proves the inequality.

We now consider the equality case. Let $\z$ minimize $\E_H(\z)$ subject
to $z_s=0$ and $z_t=1$. By the definition of effective resistance,
$\E_H(\z)=1/R_H(s,t)$. Put
\[
\widetilde{\z}=R_H(s,t)\z.
\]
Then
$\widetilde z_t-\widetilde z_s=R_H(s,t)$ and
$\E_H(\widetilde{\z})=R_H(s,t)$.
The minimizing vector is harmonic at every vertex other than $s$ and
$t$. For the original normalization $z_t-z_s=1$, the total difference
along the edges leaving $s$ is $1/R_H(s,t)$. Hence, after multiplying
by $R_H(s,t)$, the total difference along the edges leaving $s$ is
equal to $1$. Summing the harmonic equations over the vertices on the
left of any ordered cut cancels all contributions from edges lying
entirely on that side. Thus, for $\widetilde{\z}$, the sum of the
differences over the edges crossing every ordered cut is also $1$.

Suppose now that equality holds in the resistance bound proved above.
Then equality must hold at every step used in its proof. In particular,
we must have equality in the weighted Cauchy--Schwarz inequality.
The equality condition there says that the total differences of the
pieces are proportional to the reciprocals of their coefficients.
The total endpoint difference of $\widetilde{\z}$ equals
$R_H(s,t)=m/\alpha-1$, which is also the sum of those reciprocal
coefficients. Hence the proportionality constant is one. This gives
a total difference $2/\alpha$ on the initial interval,
$|B|/\alpha$ on every selected interval $B$, and $1/\alpha$ on every
uncovered index. In particular, every selected interval has positive
total difference.
A nonbridge selected interval cannot occur. Indeed, the local
comparison for such an interval is strict whenever its total difference
is positive. Thus every selected interval must begin at a bridge.
We next use the equality conditions inside a bridge interval.
Formula~\eqref{gm:eq:terminalbridge} shows that its two end differences
are $1$ and $1/\alpha$, respectively. For the initial interval, the
corresponding equality condition gives an end difference $1/\alpha$
at each end. In particular, all these end differences are positive.
There can be no uncovered index. Suppose that $i$ were uncovered.
As observed above, its difference is $1/\alpha>0$. Any neighboring
difference index is either also uncovered or is an end index of the
initial interval or of a bridge interval, and in every case its
difference is positive. Equation~\eqref{eq:neighborcount}, including
the two end cases, then gives a positive term among the terms that were
discarded in obtaining the lower bound. Equality would therefore be
impossible. Hence there are no uncovered indices.
It follows that the first $d$ differences and the selected intervals
of length $D$ cover all the differences, without overlap. Since
$d=D-1$, the number of vertices $m$ is therefore a multiple of $D$.
Moreover, the bridges occur between consecutive groups of $D$ vertices.

Consider one such group $T$. Let $b_T$ be the number of edges joining
$T$ to the rest of $H$, and let
$h_T=|T\cap\{s,t\}|$. For an interior group we have $b_T=2$ and
$h_T=0$; for an end group we have $b_T=1$ and $h_T=1$; and if there
is only one group, then $b_T=0$ and $h_T=2$. Thus in every case
\[
b_T+h_T=2.
\]
Using the degree conditions and summing the degrees of the vertices of
$T$, we obtain
\[
2e(T)\ge dD-h_T-b_T=dD-2.
\]
Since $D=d+1$, this gives
$e(T)\ge\binom D2-1$. As $H$ is simple, every group therefore induces
either $K_D$ or $K_D-e$.

The case $K_D$ cannot occur in the equality case. On the initial
vertex group, the actual internal quadratic form would then be
$\bv^\top L(K_D)\bv$, whereas the comparison used
$\bv^\top L(K_D)\bv-Z^2$. Here the total internal difference is
$Z=2/\alpha>0$. The same strict improvement by $Z^2$ applies to
any later group, with the incoming bridge term retained separately
in \eqref{gm:eq:terminalbridge}. Thus a complete group would make
the resistance inequality strict.
Hence every group induces $K_D-e$. The two endpoints of its missing
edge have internal degree $d-1$. The degree conditions therefore force
the bridge endpoints, or $s$ and $t$ at the two ends of the chain, to
be precisely these two vertices. Thus $H$ has exactly the chain
structure stated in the theorem.
Conversely, every graph with this chain structure attains equality.
Each copy of $K_D-e$ has effective resistance $2/\alpha$ between its
two attachment vertices, and every bridge has resistance $1$. If there
are $r$ copies, then $m=rD$, and the resistances add in series:
\[
R_H(s,t)
=r\frac{2}{\alpha}+(r-1)
=r\frac{D}{\alpha}-1
=\frac{m}{\alpha}-1.
\]
Hence all the stated chains give equality.
\end{proof}

\subsection{Replacing an end subgraph}
In this subsection, we compare end subgraphs through their contribution
at the attachment vertex. This avoids changing the mean of a trial vector.

We use one matrix identity for fixed boundary values. Let
\[
 M=\begin{pmatrix}C&-J\\-J^\top&B\end{pmatrix},\qquad
 S=C-JB^{-1}J^\top,
\]
where $M$ is real symmetric and $B$ is positive definite.

Direct expansion gives
\begin{equation}\label{eq:boundary-square}
 \begin{pmatrix}\bu\\\bv\end{pmatrix}^{\!\top}
 M\begin{pmatrix}\bu\\\bv\end{pmatrix}
 =\bu^\top S\bu+
 (\bv-B^{-1}J^\top\bu)^\top B(\bv-B^{-1}J^\top\bu).
\end{equation}
For fixed $\bu$, the minimum is therefore $\bu^\top S\bu$, attained
at $\bv=B^{-1}J^\top\bu$. The matrix $S$ is the \emph{Schur
complement}; see \cite[Section~12.7]{Spielman}.
Moreover, $M$ and $S$ have the same number of negative eigenvalues,
counted with multiplicity. To see this, the invertible change
$(\bu,\bv)\mapsto(\bu,\bv-B^{-1}J^\top\bu)$ makes the quadratic
form block diagonal with blocks $S,B$. An invertible change
preserves the largest dimension of a subspace on which a quadratic
form is negative definite. By diagonalization, this dimension is
its number of negative eigenvalues, and $B$ has none. This is the
case of Sylvester's law of inertia needed here; see
\cite[Theorem~24.2.2]{Spielman}.

We will also use its exact small-parameter form. Suppose a quadratic
form with fixed boundary values has minimum $q(0)$ at the vector
$\mathbf f_0$ of free values, and its excess at $\mathbf f_0+\mathbf w$
is $\mathbf w^\top B\mathbf w$. Let $\omega$ be the fixed boundary
contribution to the squared norm. If $B-tI$ is positive definite,
the minimum after subtracting $t$ times the squared norm is
\begin{equation}\label{eq:boundary-expansion}
 q(t)=q(0)-t(\|\mathbf f_0\|^2+\omega)
             -t^2\mathbf f_0^\top(B-tI)^{-1}\mathbf f_0.
\end{equation}
Indeed, the terms depending on $\mathbf w$ are
$\mathbf w^\top(B-tI)\mathbf w-2t\mathbf f_0^\top\mathbf w$.
Their minimum is the last term in \eqref{eq:boundary-expansion},
by \eqref{eq:boundary-square}. In particular, the minimizing
change is $\mathbf w=t(B-tI)^{-1}\mathbf f_0$.

An end subgraph is a connected subgraph joined to the rest by one
bridge. If $r$ is its attachment vertex, its internal degree is at
least $d-1$, and all other internal degrees are at least $d$.
The order of this portion is not yet known to be bounded. The
following estimates keep their dependence on that order explicit.
Let $H$ be connected, of order $b$, with specified vertex $r$. Attach it to the rest of a graph by one bridge. Put
\[
 A_H=L(H)+e_re_r^\top,\quad
 F_H(t)=e_r^\top(A_H-tI)^{-1}e_r,\quad
 T_H(t)=\1^\top(A_H-tI)^{-1}\1.
\]
\begin{lemma}\label{gm:lem:response}
The smallest eigenvalue of $A_H$ is at least $b^{-2}$, and
\[
 F_H(t)=1+bt+t^2T_H(t)\qquad
 (0\le t<\lambda_{\min}(A_H)).
\]
For $0\le t<b^{-2}$,
\[
 T_H(0)\le T_H(t)\le\frac{T_H(0)}{1-tb^2},\qquad T_H(0)\le b^3.
\]
When $tb^2\le1/2$, one also has
\begin{equation}\label{eq:resolventbounds}
 \|(A_H-tI)^{-1}\|\le2b^2,\quad
 \|(A_H-tI)^{-1}\1\|^2\le4b^5,\quad
 0\le T_H(t)-T_H(0)\le2tb^5.
\end{equation}
The norm of a matrix here is its Euclidean operator norm.
\end{lemma}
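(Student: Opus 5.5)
The plan is to treat $A_H$ as a grounded Laplacian: $A_H$ is the Laplacian of $H$ with an extra unit conductance from $r$ to a ground. We get the eigenvalue bound from a path/Cauchy--Schwarz argument like \Cref{lem:smallentries}, and the identity for $F_H$ from the relation $A_H\1=e_r$.

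\textbf{Smallest eigenvalue.} Let $\z$ be a unit vector. Pick $u$ with $|z_u|\ge b^{-1/2}$, and take a shortest path in $H$ from $u$ to $r$, of length at most $b-1$. Extend this path by the grounding ``edge'' at $r$, so the total length is at most $b$. Then Cauchy--Schwarz gives
\[
 z_u^2\le b\Bigl(\sum_{\text{path}}(\text{differences})^2+z_r^2\Bigr)\le b\,\z^\top A_H\z .
\]
Hence $\z^\top A_H\z\ge b^{-2}$. In particular $A_H$ is positive definite.

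\textbf{The formula for $F_H$.} Since $L(H)\1=\0$, we have $A_H\1=e_r$, so $(A_H-tI)\1=e_r-t\1$. Thus $\1=(A_H-tI)^{-1}e_r-t(A_H-tI)^{-1}\1$, that is,
\[
 (A_H-tI)^{-1}e_r=\1+t(A_H-tI)^{-1}\1 .
\]
Pairing with $e_r$ gives $F_H(t)=1+t\,e_r^\top(A_H-tI)^{-1}\1$. By the same identity and symmetry,
\[
 e_r^\top(A_H-tI)^{-1}\1=\1^\top(A_H-tI)^{-1}e_r=\1^\top\1+t\,T_H(t)=b+tT_H(t),
\]
which yields $F_H(t)=1+bt+t^2T_H(t)$.

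\textbf{Bounds on $T_H$.} Expand in an orthonormal eigenbasis of $A_H$ with eigenvalues $\lambda_k\ge b^{-2}$. Then
\[
 T_H(t)=\sum_k c_k^2/(\lambda_k-t),
\]
which is nondecreasing in $t$. Since $\lambda_k/(\lambda_k-t)\le1/(1-tb^2)$ termwise, this gives $T_H(0)\le T_H(t)\le T_H(0)/(1-tb^2)$. For $T_H(0)=\1^\top A_H^{-1}\1$, I would use $A_H^{-1}e_r=\1$ together with the effective-resistance interpretation: $A_H^{-1}e_u$ is the potential for a unit current from $u$ to ground. Its entries are bounded by the resistance from $u$ to ground, which is at most $\dm+1\le b$. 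Summing over all $b^2$ entries gives $T_H(0)\le b^3$.

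An alternative route avoids potentials: use the crude bound $\1^\top A_H^{-1}\1\le\|\1\|^2\|A_H^{-1}\|\le b\cdot b^2$. This is simpler, and I would adopt it.

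\textbf{The bounds \eqref{eq:resolventbounds}.} Suppose $tb^2\le1/2$. Then every eigenvalue of $A_H-tI$ is at least $b^{-2}/2$, so $\|(A_H-tI)^{-1}\|\le2b^2$. Next, $\|(A_H-tI)^{-1}\1\|^2\le(2b^2)^2b=4b^5$. Finally,
\[
 T_H(t)-T_H(0)=t\,\1^\top A_H^{-1}(A_H-tI)^{-1}\1\le t\,b\cdot b^2\cdot2b^2=2tb^5
\]
by the resolvent identity, and the difference is nonnegative by the monotonicity above.

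The only step needing care is the eigenvalue lower bound, specifically the path length at most $b$ including the ground edge. The rest is spectral bookkeeping.
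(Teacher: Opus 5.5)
Your proof is correct and follows the paper's route. It uses the grounded-path Cauchy--Schwarz bound for $\lambda_{\min}(A_H)$, the relation $A_H\1=e_r$ for the formula for $F_H$, the eigenbasis expansion for $T_H$, the crude bound $T_H(0)\le\|\1\|^2\|A_H^{-1}\|$, and the resolvent identity; the only difference is cosmetic, since you apply Cauchy--Schwarz at one entry with $z_u^2\ge b^{-1}$ instead of summing the pointwise bound over all $b$ vertices, and both give $b^{-2}$.
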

\begin{proof}
Add a zero-valued vertex $0$ joined to $r$. Every vertex of $H$
has a path to $0$ of at most $b$ edges. Cauchy--Schwarz on this
path gives $z_v^2\le b\,\z^\top A_H\z$. Summing over the $b$
vertices proves $\|\z\|^2\le b^2\z^\top A_H\z$ and hence the
eigenvalue bound.

For $0\le t<b^{-2}$ and an eigenvalue $\lambda\ge b^{-2}$,
\[
 \frac1\lambda\le\frac1{\lambda-t}
 \le\frac1{1-tb^2}\frac1\lambda.
\]
Expanding $\1$ in an orthonormal eigenbasis proves the bounds for
$T_H(t)$. Also $T_H(0)\le\|\1\|^2\|A_H^{-1}\|\le b^3$.
If $tb^2\le1/2$, then $\|(A_H-tI)^{-1}\|\le2b^2$, giving the
first two estimates in \eqref{eq:resolventbounds}. The identity
\[
 (A_H-tI)^{-1}-A_H^{-1}
 =tA_H^{-1}(A_H-tI)^{-1}
\]
gives the last estimate, since $\|\1\|^2=b$.

Finally, $A_H\1=e_r$, so
$(A_H-tI)^{-1}e_r=\1+t\bu$, where
$\bu=(A_H-tI)^{-1}\1$. Summing $(A_H-tI)\bu=\1$ gives
$u_r-tT_H(t)=b$. Taking the $r$-component of the preceding
inverse identity yields
$F_H(t)=1+tu_r=1+bt+t^2T_H(t)$.
\end{proof}
The next lemma gives a strict comparison in terms of the attachment
response. It is related to the bounded-perturbation comparison in
\cite[Theorem~2.4]{AbGhDiam}, but does not require an upper bound on
the increase of the response.
\begin{lemma}\label{gm:lem:replacecap}
Let $\mu=\mu(G)$, and suppose a Fiedler vector has nonzero value
at the vertex immediately outside an end subgraph $H$.
Let $H'$ be an admissible same-order replacement, and suppose
\[
 \mu<\min\{\lambda_{\min}(A_H),\lambda_{\min}(A_{H'})\}.
\]
If $\Delta=F_{H'}(\mu)-F_H(\mu)>0$, then the replaced graph $G'$
has smaller algebraic connectivity.
\end{lemma}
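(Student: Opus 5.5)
The plan is a Schur-complement computation at the bridge. Write $G$ as the end subgraph $H$ (attachment vertex $r$), the bridge $rv$, and the rest $G_0$ containing $v$. Let $\x$ be a unit mean-zero Fiedler vector with $x_v\ne0$. For fixed values on $G_0$, we minimize
\[
\E_H(\z)+(z_r-x_v)^2-\mu\|\z\|^2
\]
over free values $\z$ on $H$. The quadratic part in $\z$ is $A_H-\mu I$, which is positive definite by hypothesis. By \eqref{eq:boundary-square} the minimizer is $\z=x_v(A_H-\mu I)^{-1}e_r$, and the minimum is
\[
x_v^2\bigl(1-F_H(\mu)\bigr)-\mu\cdot 0,
\]
where the boundary norm of $v$ is counted on the $G_0$ side. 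The Fiedler vector satisfies the eigen-equations on $H$, so its restriction to $H$ is exactly this minimizer. Hence
\[
\x^\top L(G)\x-\mu\|\x\|^2=\Phi_0+x_v^2\bigl(1-F_H(\mu)\bigr)=0,
\]
where $\Phi_0=\E_{G_0}(\x)-\mu\|\x_{G_0}\|^2$.

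Next we build a trial vector $\y$ on $G'$. It equals $\x$ on $G_0$ and equals $x_v(A_{H'}-\mu I)^{-1}e_{r'}$ on $H'$. The same computation gives
\[
\y^\top L(G')\y-\mu\|\y\|^2=\Phi_0+x_v^2\bigl(1-F_{H'}(\mu)\bigr)=-x_v^2\Delta<0.
\]
To apply \eqref{eq:strictcomparison} we still need the mean correction $\frac\mu n(\1^\top\y)^2$. The key observation is the sum identity. Summing the eigen-equations over $H$ gives $\mu\sum_H x=x_r-x_v$. On the $H'$ side, summing $(A_{H'}-\mu I)\z'=x_ve_{r'}$ gives $\mu\sum_{H'}z'=z'_{r'}-x_v$. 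Also, from the proof of \Cref{gm:lem:response}, the $r$-component satisfies $z_r=x_vF_H(\mu)$. Therefore
\[
\sum_H x=\frac{x_v\bigl(F_H(\mu)-1\bigr)}{\mu},\qquad
\1^\top\y=\frac{x_v\Delta}{\mu}.
\]
The mean correction is then $\frac{\mu}{n}\cdot\frac{x_v^2\Delta^2}{\mu^2}=x_v^2\Delta^2/(n\mu)$. This is not obviously smaller than $x_v^2\Delta$, so the naive trial vector may fail, and this is the main obstacle.

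To handle it, I would use a one-parameter family instead of the tuned vector. Put $\y_s=\x$ on $G_0$ and $\y_s=x_v(A_{H'}-\mu I)^{-1}e_{r'}$ scaled appropriately, or more simply interpolate between responses. The cleanest approach is a Sylvester-inertia argument. Consider $L(G')-\mu I$ restricted to $\1^\perp$, or equivalently count negative eigenvalues. By the inertia statement following \eqref{eq:boundary-square}, the number of negative eigenvalues of $L(G')-\mu I$ equals that of its Schur complement onto $G_0$. That Schur complement is the one for $G$ plus the rank-one term $-\Delta e_ve_v^\top$.

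For $G$, the corresponding Schur complement $S$ has exactly one negative eigenvalue, coming from $\1$ direction eigenvalue $0<\mu$. It is positive semidefinite otherwise, with a kernel containing $\x_{G_0}$. Its inertia is again that of $L(G)-\mu I$, where the eigenvalue $0$ is negative and $\mu$ is zero; possible eigenvalues below $\mu$ inside $H$ are excluded by $\mu<\lambda_{\min}(A_H)$. Since $x_v\ne0$, the form $S-\Delta e_ve_v^\top$ is negative on $\x_{G_0}$. It is also negative on the earlier negative direction, and a short two-dimensional check shows these span a two-dimensional negative subspace. The negative vector of $S$ can be taken $S$-orthogonal to the kernel vector $\x_{G_0}$, and then the cross term is $-\Delta(\cdot)_v x_v$, which can be absorbed by sign choice and the strict negativity on both.

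So $L(G')-\mu I$ has at least two negative eigenvalues, that is, $\mu(G')<\mu$. The hypothesis on $\lambda_{\min}(A_{H'})$ is what justifies positive definiteness of the $B$-block for $G'$.
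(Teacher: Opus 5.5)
Your final argument is the paper's proof: eliminate the end subgraph by the Schur complement at the bridge, note that $S$ inherits exactly one negative eigenvalue from $L(G)-\mu I$ and has $\x_{G_0}$ in its kernel with $x_v\ne0$, then show that $S-\Delta e_ve_v^\top$ is negative definite on the span of the negative eigenvector $\bu$ and $\x_{G_0}$. It follows that $L(G')-\mu I$ has two negative eigenvalues. You were right to abandon the trial vector, since its mean correction $x_v^2\Delta^2/(n\mu)$ is uncontrolled. The two-dimensional check needs no ``sign choice'' or absorption of a cross term, however: since $S\x_{G_0}=\0$, the form on $\xi\bu+\zeta\x_{G_0}$ is exactly $-\sigma\xi^2-\Delta(\xi u_v+\zeta x_v)^2$, which is negative unless $\xi=\zeta=0$.
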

\begin{proof}
Let $s$ be the vertex immediately outside $H$, and let $r$ be
its attachment vertex in $H$. Order the exterior vertices first.
Then
\[
 L(G)-\mu I=
 \begin{pmatrix}C&-e_se_r^\top\\-e_re_s^\top&A_H-\mu I\end{pmatrix}.
\]
The exterior block $C$ is unchanged by the replacement, since the
exterior graph and its single bridge to the end subgraph are unchanged.
By \eqref{eq:boundary-square}, the reduced exterior matrix is
$S=C-F_H(\mu)e_se_s^\top$.
The matrix $L(G)-\mu I$ has exactly one negative eigenvalue:
$G$ is connected, and $\mu$ is its second Laplacian eigenvalue.
Thus $S$ also has exactly one negative eigenvalue.

Let $\z$ be the exterior restriction of the Fiedler vector.
Eliminating its interior values gives $S\z=\0$, and $z_s\ne0$
by hypothesis. The new reduced matrix is
$S'=S-\Delta e_se_s^\top$. Choose a unit eigenvector $\bu$ of
$S$ with eigenvalue $-\sigma<0$. Since $S\z=\0$, we have
$\bu\perp\z$. For real $\xi,\zeta$,
\begin{equation}\label{gm:eq:replacechange}
 (\xi\bu+\zeta\z)^\top S'(\xi\bu+\zeta\z)
 =-\sigma\xi^2-\Delta(\xi u_s+\zeta z_s)^2<0
 \quad\text{if }(\xi,\zeta)\ne(0,0).
\end{equation}
Indeed, the first term is negative if $\xi\ne0$; otherwise
use $\Delta>0$ and $z_s\ne0$. Hence $S'$ has at least two
negative eigenvalues. The positive definiteness of $A_{H'}-\mu I$
and \eqref{eq:boundary-square} give the same conclusion for
$L(G')-\mu I$. Therefore $\mu(G')<\mu(G)$.
\end{proof}
\subsection{Bounding the exceptional gaps}
We show that positions outside the desired bridge intervals have a
definite cost in our estimates.

We now estimate how many difference indices fail to lie in the desired bridge intervals. We group together the uncovered indices and the selected intervals that do not begin at a bridge, and show that their total contribution is small. This will imply that only a small number of positions belong to exceptional gaps.

All constants in this section depend only on $d$. We use the nondecreasing order of the
vector under discussion. A selected interval starts at the first cut of a group of cuts of
size at most $d-2$. Its length is $D$ if the first cut has size one, and $d$ otherwise.
Call these bridge intervals and other selected intervals, respectively. Write $U$ for the
uncovered difference indices and $V$ for the union of $U$ and all other selected
intervals.

\begin{lemma}\label{gm:lem:fixedloss}
Let $H,s,t$ satisfy \Cref{gm:thm:terminal}, and let $z_s=\min \z<\max \z=z_t$. The
first $d$ differences are treated separately as in that theorem; select the subsequent
intervals as above. There is $c_d>0$ such that, for every $I>0$,
\begin{equation}\label{gm:eq:fixedloss}
 \left(\frac m\alpha-1\right)I^2-2I(z_t-z_s)+\E_H(\z)
 \ge c_d|V|I^2.
\end{equation}
In particular, if every gap between consecutive cuts of size one has more than $D$
vertices, then, for $m$ sufficiently large,
\begin{equation}\label{gm:eq:strongcell}
 \E_H(\z)\ge \frac{\alpha+\eta_d}{m}(z_t-z_s)^2
\end{equation}
for a constant $\eta_d>0$.
\end{lemma}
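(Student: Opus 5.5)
The proof follows the inequality in \Cref{gm:thm:terminal}, but keeps the weighted Cauchy--Schwarz remainders rather than discarding them. Order $\z$ increasingly with $s$ first and $t$ last, put $a_i=z_{i+1}-z_i$, and split the index set $\{1,\dots,m-1\}$ into pieces. These are the initial interval $\{1,\dots,d\}$, the bridge intervals (length $D$), the other selected intervals (length $d$), and the single uncovered indices.

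Each piece $P$ carries a \emph{base coefficient} $c_P$ with $Q_P\ge c_P\Delta_P^2$. By the proof of \Cref{gm:thm:terminal}, the base coefficients are $\alpha/2$ for the initial interval, $\alpha/D$ for a bridge interval, $\alpha/d$ for another selected interval and $\alpha$ for an uncovered index. The reciprocals sum to $2/\alpha+(m-1-d)/\alpha=m/\alpha-1$, the coefficient of $I^2$ in \eqref{gm:eq:fixedloss}. Since $z_t-z_s=\sum_P\Delta_P$, the left side of \eqref{gm:eq:fixedloss} equals
\[
\sum_P\Bigl(\frac{I^2}{c_P}-2I\Delta_P+Q_P\Bigr)+\mathcal D,
\]
where $\mathcal D\ge0$ collects the cross terms $2K_{ij}a_ia_j$ of \eqref{eq:full} discarded between different pieces, as in \eqref{eq:excess}. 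For each piece, $Q_P\ge c_P\Delta_P^2$ gives $I^2/c_P-2I\Delta_P+Q_P\ge c_P(\Delta_P-I/c_P)^2\ge0$. So the left side is a sum of nonnegative terms, and I will call it $\mathcal R$.

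Next I extract a loss proportional to $|V|I^2$. A nonbridge selected interval $B$ satisfies the stronger bound \eqref{eq:localstrict} with $c'=(\alpha+\kappa_d)/d$. Its term is therefore at least $I^2(d/\alpha-1/c')$, which is a positive constant times $|B|I^2$. For the uncovered indices I will apply \Cref{lem:adjacent} with the constant choice $p_i=I/\alpha$, so that $\sum(p_{i+1}-p_i)^2=0$. At the ends of a bridge interval the equality profile of \Cref{gm:thm:terminal} has end differences $I$ and $I/\alpha$, so I take $c_j=\alpha$ at the first end and $c_j=1$ at the last. The initial interval gets endpoint targets $I/\alpha$ with $c_j=1$.

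The hypotheses of \Cref{lem:adjacent} then have to be checked against $C_d\mathcal R$. On $U$, the term $\alpha(a_i-I/\alpha)^2$ is exactly the remainder of that piece. On $A$, the quantity $\sum_{i\in A}p_i^2$ is already bounded by the loss just found. The adjacent products $K_{i,i+1}a_ia_{i+1}$ involving $U$ all lie in $\mathcal D$. The bridge endpoint errors come from \eqref{eq:bridgeends}, or from its uncompleted form \eqref{gm:eq:terminalbridge}. These control $(a_a-\alpha t_B)^2+(a_{a+d}-t_B)^2$ by $R_B$, where $t_B=\Delta_B/D$. The piece remainder $\frac{\alpha}{D}(\Delta_B-DI/\alpha)^2$ controls $(t_B-I/\alpha)^2$, and the triangle inequality combines the two. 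The initial interval is handled the same way through \eqref{eq:cliquesquares}.

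\Cref{lem:adjacent} then yields $|V|I^2/\alpha^2\le C_d\mathcal R$, which is \eqref{gm:eq:fixedloss}. I expect the main obstacle to be the bookkeeping of these endpoint targets, in particular the initial interval and the coefficient-$\alpha$ end of a bridge interval.

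For \eqref{gm:eq:strongcell}, let $k$ be the number of bridge intervals. Then $|V|=m-1-d-Dk$. If every gap between consecutive cuts of size one has more than $D$ vertices, each consecutive pair of bridge intervals is separated by at least one index of $V$, so $|V|\ge k-1$. Combining the two relations gives $|V|\ge c m$ for large $m$, with $c>0$ depending only on $d$. Rearranging \eqref{gm:eq:fixedloss} gives
\[
\E_H(\z)\ge 2I(z_t-z_s)-\Bigl(\frac m\alpha-1-c_dcm\Bigr)I^2
\]
for every $I>0$. Choosing $I=(z_t-z_s)/(m/\alpha-1-c_dcm)$ gives $\E_H(\z)\ge(z_t-z_s)^2/(m/\alpha-c_dcm)$. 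This is at least $(\alpha+\eta_d)(z_t-z_s)^2/m$ for a suitable $\eta_d>0$.
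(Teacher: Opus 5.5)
Your proposal is correct and follows the paper's proof essentially step for step. Both arguments complete squares piece by piece with the constant $I$, charge each nonbridge interval the penalty $d\bigl(1/\alpha-1/(\alpha+\kappa_d)\bigr)I^2$, and apply \Cref{lem:adjacent} with $p_i=I/\alpha$, taking $c_j=\alpha$ or $1$ at the bridge and initial endpoints; both then derive $|V|\ge m/(D+1)-C_d$ from the gap hypothesis and optimize in $I$.

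There are two cosmetic differences. You control the bridge endpoints through \eqref{eq:bridgeends} plus the triangle inequality, whereas the paper uses the uncompleted remainder \eqref{gm:eq:bridgeremainder}. You should also state that the denominator $m/\alpha-1-c_dcm$ in your choice of $I$ is positive. This holds automatically, since \eqref{gm:eq:fixedloss} holds for all $I>0$ with $z_t>z_s$, or it can be arranged by shrinking the constant, as the paper does.
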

\begin{proof}
Put $a_i=z_{i+1}-z_i$. Retain all terms within the initial interval and within each
selected interval, as well as the uncovered diagonal terms. All other terms are
nonnegative. Complete squares after subtracting $2I\sum a_i$.
The constants sum to
$[2+(m-1-d)]I^2/\alpha=(m/\alpha-1)I^2$; thus the total
remainder $\mathcal R$ is the left side of \eqref{gm:eq:fixedloss}.
The initial interval uses
the constant $2I^2/\alpha$. An interval beginning at a bridge uses $DI^2/\alpha$ and has
remainder at least
\begin{equation}\label{gm:eq:bridgeremainder}
 (a_a-I)^2+\frac\alpha2(Z-2I/\alpha)^2
       +2\sum_j(v_j-Z/2)^2.
\end{equation}
In particular its first and last differences are controlled, in squared distance, from $I$
and $I/\alpha$. For the initial interval, if $Z$ is its total difference and $v_j$ its
accumulated interior components, the remainder is at least
\[
 \frac\alpha2(Z-2I/\alpha)^2+2\sum_j(v_j-Z/2)^2.
\]
Its first difference is $v_1$ and its last is $Z-v_{d-1}$, so both are controlled in
squared distance from $I/\alpha$. The sum of these endpoint squared distances is at most a
constant depending on $d$ times $\mathcal R$.

An uncovered index contributes at least $\alpha(a_i-I/\alpha)^2$. Any other selected
interval has the strict bound $Q\ge(\alpha+\kappa_d)\Delta^2/d$ from
\Cref{lem:local}; consequently its remainder is at least
\[
 h_d I^2,\qquad h_d=d\left(\frac1\alpha-\frac1{\alpha+\kappa_d}\right)>0.
\]
There are no such intervals when $d=3$.

Apply \Cref{lem:adjacent} with the constant values
$p_i=I/\alpha$. Take $c_j=\alpha$ at the first index of a bridge
interval and $c_j=1$ at its last index and at both endpoints of
the initial interval. The preceding squares control the endpoint
and uncovered errors. The strict bound $h_dI^2$ controls
$\sum_{i\in A}p_i^2$, since each nonbridge interval has length $d$.
All adjacent products having an uncovered index are among the
remaining nonnegative cross terms. Also $q_i\ge\alpha\ge2$ on $U$.
The variation term in \eqref{eq:adjacentcontrol} is zero, giving
$|V|I^2/\alpha^2\le C_d\mathcal R$. This is
\eqref{gm:eq:fixedloss}.

If there are $k$ cuts of size one, their mutual distances are at least $D+1$ by
hypothesis. Consequently $kD\le Dm/(D+1)+C_d$. Apart from the initial $d$ differences, the
indices outside their bridge intervals are exactly $V$. Hence $|V|\ge m/(D+1)-C_d$.
Inequality \eqref{gm:eq:fixedloss} reduces the coefficient of $I^2$ in the upper bound for
$2I(z_t-z_s)-\E_H(\z)$ from $m/\alpha$ to $(1/\alpha-c'_d)m+O_d(1)$. Decrease $c'_d$, if
necessary, so that it is less than $1/(2\alpha)$. Maximizing in $I>0$ proves
\eqref{gm:eq:strongcell} for all sufficiently large $m$.
\end{proof}

\begin{lemma}\label{gm:lem:weightedloss}
Fix $0<\xi\le1/2$. Let $H,r$ be an end subgraph of order $b$, with $\deg_H(r)\ge d-1$ and
every other degree at least $d$. Let $0\le t b^2\le1/2$ and put $\bu=(A_H-tI)^{-1}\1$. Add a
zero vertex adjacent only to $r$ and order its value first, followed by the components of
$\bu$ in nondecreasing order. Suppose that among the first $\lfloor\xi b\rfloor$ difference
indices, the consecutive cuts of size one are separated by at least $D+1$ positions. Then,
for constants $c_{d,\xi}>0$ and $C_d$,
\begin{equation}\label{gm:eq:weightedloss}
 T_H(t)\le \frac{b^3}{3\alpha}-c_{d,\xi}b^3+C_db^2+4tb^5.
\end{equation}
\end{lemma}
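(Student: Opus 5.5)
The plan is a variational (Dirichlet-type) bound for $T_H$, followed by the ordered-cut comparison with a path. First I reduce to $t=0$. By \eqref{eq:resolventbounds}, $T_H(t)\le T_H(0)+2tb^5$, which is within the allowed error $4tb^5$, so it suffices to bound $T_H(0)=\1^\top A_H^{-1}\1$. The ordering hypothesis is stated for $\bu=(A_H-tI)^{-1}\1$, not for $A_H^{-1}\1$, so I will not pass to $t=0$ naively. Instead I use the exact variational identity
\[
 T_H(t)=\max_{\z}\bigl(2\1^\top\z-\z^\top(A_H-tI)\z\bigr),
\]
valid because $A_H-tI$ is positive definite. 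The maximum is attained at $\z=\bu$, whose ordering is exactly the given one. Hence $T_H(t)=2\1^\top\bu-\E_{H_0}(\bu)+t\|\bu\|^2$, where $H_0$ is $H$ with the zero vertex $0$ attached to $r$. By \eqref{eq:resolventbounds}, $t\|\bu\|^2\le4tb^5$. It therefore remains to show
\[
 2\1^\top\bu-\E_{H_0}(\bu)\le\frac{b^3}{3\alpha}-c_{d,\xi}b^3+C_db^2 .
\]

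Next I write this through differences. By \Cref{lem:positive}, $\bu>\0$, so the zero vertex is the minimum. Order the vertices of $H_0$ as $0=z_0\le z_1\le\cdots\le z_b$ and put $a_i=z_{i}-z_{i-1}$. Then $\1^\top\bu=\sum_{i}(b-i+1)a_i$, since each difference is counted by all vertices above it. By \eqref{eq:full}, $\E_{H_0}(\bu)=\sum K_{ij}a_ia_j$, and I apply the degree bookkeeping of \Cref{gm:lem:fixedloss}. The first $d$ differences form an initial interval, where only $r$ (and the zero vertex) can be deficient. The later cuts of size at most $d-2$ are grouped into bridge intervals and other selected intervals, and the remaining diagonal terms satisfy $q_i\ge\alpha$. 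Completing squares interval by interval, as in \Cref{gm:lem:fixedloss} but now with the \emph{position-dependent} weights $I_i=b-i+1$ in place of the constant $I$, gives
\[
 2\sum_i I_ia_i-\E\le\frac1\alpha\sum_i I_i^2+O_d(b^2)-c_d\sum_{i\in V}I_i^2 .
\]
The weights vary by $O(1)$ across an interval of length at most $D$, so replacing them by local constants costs $O_d(b)$ per interval, hence $O_d(b^2)$ in total. Since $\sum_{i=1}^b I_i^2=b^3/3+O(b^2)$, the main term is $b^3/(3\alpha)$. The loss term comes from \Cref{lem:adjacent} applied with $p_i=I_i/\alpha$. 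Its variation term is $\sum(p_{i+1}-p_i)^2=O(b)$, which is harmless.

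Finally I produce the gain $c_{d,\xi}b^3$. Among the first $\lfloor\xi b\rfloor$ indices, consecutive size-one cuts are at least $D+1$ apart. So, exactly as in the counting at the end of \Cref{gm:lem:fixedloss}, at least $\xi b/(D+1)-O_d(1)$ of these indices lie in $V$. On this range $I_i\ge(1-\xi)b\ge b/2$, so
\[
 c_d\sum_{i\in V}I_i^2\ge c_d\frac{\xi b}{D+1}\frac{b^2}{4}-O_d(b^2).
\]
This gives $c_{d,\xi}=c_d\xi/(4(D+1))$ after absorbing the $O_d(b^2)$ terms into $C_db^2$.

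The main obstacle is the weighted version of \Cref{gm:lem:fixedloss}. There the bridge-interval remainders \eqref{gm:eq:bridgeremainder} and the endpoint controls feeding \Cref{lem:adjacent} were derived for a constant $I$. I expect to redo that completion of squares with $I$ frozen at the left end of each interval. The discrepancy $\sum_{i\in B}(I_i-I_a)a_i$ is at most $D\sum_{i\in B}a_i$, and summed over all intervals this is $O_d(z_b)$. Bounding it requires $z_b=O(b^2)$, which follows from $\|\bu\|_\infty\le\|\bu\|\le2b^{5/2}$ only crudely. I would instead use the harmonic-flux bound: the sum of differences over edges crossing each cut equals the number of vertices above it, at most $b$. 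This gives $a_i\le b$ on each path step and hence $z_b\le b^2$ along a shortest path. That keeps the error at $O_d(b^2)$.
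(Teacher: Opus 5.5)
Your proposal is correct and follows essentially the paper's proof. The shared steps are the identity $T_H(t)=2\1^\top\bu-\bu^\top A_H\bu+t\|\bu\|^2$ with $t\|\bu\|^2\le4tb^5$, the weights $b+1-i$ frozen at the left end of each selected interval, the square completions of \Cref{gm:lem:fixedloss} fed into \Cref{lem:adjacent} with variation term $O_d(b)$, and the count $|V\cap\{1,\ldots,\lfloor\xi b\rfloor\}|\ge\xi b/(D+1)-O_d(1)$ from the separation hypothesis.

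Your ``main obstacle'' is not actually one. The weights decrease and $a_i\ge0$, so freezing them at the left end can only increase $2\sum_i(b+1-i)a_i$, which is the harmless direction for an upper bound. The flux estimate $z_b=O(b^2)$ is therefore unnecessary. If you keep it, note that for $t>0$ the cut flux is $|S|+t\sum_{v\in S}u_v\le2b$, not exactly $|S|$. Similarly, the first cut, from the zero vertex to $r$, has size one, so the first selected interval is simply a bridge interval of length $D$; no separate initial interval is needed.
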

\begin{proof}
\Cref{gm:lem:response} makes $A_H-tI$ positive definite.
\Cref{lem:positive} therefore gives $\bu>0$.
Off $r$, the equation $L_Hu=\1+tu>0$ excludes a minimum, so
$u_r$ is the smallest component. Write $a_i$ for the ordered
consecutive differences, including the difference from the added zero vertex. All vertices
except this added vertex have degree at least $d$ in the augmented graph.
The degree counts between cuts still apply, since the counted vertices
exclude the added vertex. The last $d$ cuts are not small, by the
suffix version of \eqref{eq:cutcount}. Hence the selected intervals
are disjoint and fit within the $b$ difference indices, including the
interval beginning at the first cut, which has size one. Put $w_i=b+1-i$. Then
\[
 \sum_{v\in H}u_v=\sum_{i=1}^b w_i a_i.
\]
On a selected interval beginning at $a$, replace $w_i$ by $I_a=w_a$; on an uncovered index
retain $w_i$. As $w_i\le I_a$ and $a_i\ge0$, this can only increase the linear expression.
The sum of the constants used in completing the squares is
\[
 B_0=\frac1\alpha\left(\sum_{B}|B|I_B^2+\sum_{i\in U}w_i^2\right)
 \le\frac1\alpha\sum_{i=1}^b w_i^2+C_db^2
 =\frac{b^3}{3\alpha}+O_d(b^2).
\]
Indeed, on every interval $0\le I_B-w_i\le D$, so replacing the squares costs at most $C_db^2$ in total.

Put $\mathcal R=B_0-2\sum_vu_v+\bu^\top A_H\bu$ and define
\[
 p_i=\begin{cases}I_B/\alpha,&i\in B,\\ w_i/\alpha,&i\in U.\end{cases}
\]
The square completions in \Cref{gm:lem:fixedloss}, with
$I_B$ or $w_i$ in place of $I$, apply to this remainder. The
replacement of the weights adds only
$2(I_B-w_i)a_i\ge0$. Thus the endpoint errors, uncovered errors,
nonbridge penalties and adjacent products required by
\Cref{lem:adjacent} are all controlled by $C_d\mathcal R$.
At successive indices $p$ changes by at most $D/\alpha$, and it
is constant within each selected interval. Hence that lemma gives
\[
 \sum_{i\in V}p_i^2\le C_d\mathcal R+C_db.
\]
Put $L=\lfloor\xi b\rfloor$. On the first $L$ indices,
$p_i\ge w_i/\alpha\ge b/(2\alpha)$.
At most $L/(D+1)+O_d(1)$ bridge intervals meet this prefix, by
the assumed separation, and each covers at most $D$ indices.
Therefore
\[
 |V\cap\{1,\ldots,L\}|\ge L/(D+1)-C_d,
 \qquad
 \sum_{i\in V}p_i^2\ge c_{d,\xi}b^3-C_db^2.
\]
It follows that $\mathcal R\ge c_{d,\xi}b^3-C_db^2$, and thus
\[
 2\sum_vu_v-\bu^\top A_H\bu
 \le b^3/(3\alpha)-c_{d,\xi}b^3+C_db^2.
\]
Finally, the defining equation implies
\[
 T_H(t)=2\sum_v u_v-\bu^\top A_H\bu+t\|\bu\|^2.
\]
Use $\|\bu\|^2\le4b^5$ from \Cref{gm:lem:response}. Boundedly many small values of $b$ are absorbed by $C_db^2$.
\end{proof}

\subsection{Comparison subgraphs in the two classes}
We build replacement pieces that satisfy the required degree conditions
in each of the two classes.

\begin{lemma}\label{gm:lem:comparisons}
In the minimum-degree class, for every sufficiently large integer $m$ there is a graph $J_m$ with two specified vertices, satisfying
the degree conditions of \Cref{gm:thm:terminal}, with
\[
 R_{J_m}\ge m/\alpha-C_d.
\]
For every sufficiently large integer $b$ there is an admissible end subgraph $C_b$ with
\[
 T_{C_b}(0)\ge b^3/(3\alpha)-C_db^2.
\]
\end{lemma}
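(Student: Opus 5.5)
We construct both objects explicitly from $L_d$ chains with a bounded correction, and estimate the two quantities by series resistance and by an explicit test vector.

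\emph{The two-terminal graph $J_m$.} Write $m=Dk+\rho$ with $0\le\rho<D$, and reserve a bounded number of vertices for a fixed correction piece. For $\rho=0$ we take a chain of $k$ copies of $L_d$, joined by bridges, with $s,t$ the two free singleton vertices. By \Cref{gm:thm:terminal} its resistance is exactly $m/\alpha-1$. For general $\rho$ we replace the last copy by a bounded two-terminal graph $W_\rho$ of order $D+\rho$ that meets the degree conditions of \Cref{gm:thm:terminal}. A convenient choice is $K_{D+\rho}$ with one edge deleted at the terminal, or a copy of $L_d$ followed by a bridge to $K_{\rho'}$ with $\rho'\ge D$ after regrouping $\rho$. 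Its resistance is some positive constant. Effective resistances add in series across bridges, each bridge contributing $1$. Hence $R_{J_m}\ge(k-1)(2/\alpha+1)+R_{W_\rho}\ge m/\alpha-C_d$. This uses $D/\alpha=2/\alpha+1$ and $m=Dk+O_d(1)$. The degree conditions hold automatically: internal chain vertices have degree $d$, and the terminals have degree $d-1$ inside $J_m$.

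\emph{The end subgraph $C_b$.} Take $C_b$ to be a chain of $k$ copies of $L_d$. Its attachment vertex $r$ is the free singleton of the first copy. At the far end it is closed by a bounded end piece, namely $K_{D+\rho}$ with $0\le\rho<D$ attached by a bridge, exactly as in the comparison graphs of \Cref{sec:upper}, so that $b=Dk+D+\rho$. In the minimum-degree class this is admissible because $\deg(r)=d-1$ inside $C_b$ and every other degree is at least $d$.

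The lower bound on $T_{C_b}(0)=\1^\top A^{-1}\1$ uses the variational form
\[
 \1^\top A^{-1}\1=\max_{\z}\bigl(2\1^\top\z-\z^\top A\z\bigr),
 \qquad A=L(C_b)+e_re_r^\top .
\]
We test it with the potential of unit current injected uniformly and drained at the added zero vertex. Concretely, let $N(v)$ be the number of vertices of $C_b$ on the far side of $v$, that is, at or beyond $v$ in chain order. Set $z$ to rise by $N/\alpha$-type increments: across a bridge the increase equals the number of vertices beyond it, and inside an $L_d$ copy the increase is split as in the resistance computation. For copy $j$, with $N_j\approx b-Dj$ vertices beyond it, the drop across the copy is about $(2/\alpha)N_j$ and the drop across the following bridge is about $N_j$. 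The potential is the exact solution of the flow problem, up to $O_d(b^2)$ errors from the discrete offsets of $D$ inside each copy.

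Then $2\1^\top\z-\z^\top A\z$ equals the energy, which is $\sum_j N_j^2(2/\alpha+1)+O_d(b^2)$. This is
\[
 \frac{D}{\alpha}\sum_j (b-Dj)^2+O_d(b^2)=\frac{b^3}{3\alpha}+O_d(b^2).
\]
Alternatively, one can use the harmonic-in-the-mean vector directly: since $A$ is explicit and block tridiagonal along the chain, $A^{-1}\1$ can be computed by summing currents. Either route gives $T_{C_b}(0)\ge b^3/(3\alpha)-C_db^2$.

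\emph{Main obstacle.} The only real difficulty is bookkeeping. Inside each $L_d$ copy the current entering equals $N_j$, but the vertices of the copy itself consume current. The exact potentials therefore differ from the idealised ones by $O(1)$ per copy, relative to drops of order $b$. Summed over $O(b)$ copies this gives errors of order $b\cdot b=b^2$, which is acceptable; we expect no cancellation issue. We therefore prefer the test-vector inequality, which only needs a lower bound and tolerates any choice of the internal split. Assigning the middle clique the average of its two singletons makes the per-copy energy at most $(2/\alpha)N_j^2+O(N_j)$ while the linear term is exact up to $O(N_j)$.
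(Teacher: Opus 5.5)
Your proposal is correct and is essentially the paper's proof. The paper uses the same $L_d$ chains closed off by a bounded complete graph, both for $J_m$ (resistances adding in series across the bridges) and for $C_b$ (rooted at the free singleton of the first copy). The only difference is in the bound on $T_{C_b}(0)$. The paper solves $A_{C_b}\bu=\1$ exactly by counting the current $I$ through each incoming bridge. Your midpoint test vector in $\1^\top A^{-1}\1=\max_{\z}(2\1^\top\z-\z^\top A\z)$ instead gives $\frac{D}{\alpha}I^2-\frac{2D}{\alpha}I$ per copy, which is just the constant $(1+d^2)/\alpha$ below the paper's exact per-copy contribution and is therefore harmless.
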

\begin{proof}
Write $m=kD+r$, $0\le r<D$. Use $k-1$ copies of $L_d$ followed by $K_{D+r}$, joined by
bridges, with distinct specified vertices at the two outer ends. Its value of $R_H$ is
$(k-1)D/\alpha+2/(D+r)$, which proves the first claim.
Use the same chain as an end subgraph, rooted at the first missing-edge endpoint, with a
complete graph at its far end. Add the external zero vertex and solve $A_{C_b}u=\1$. If a
copy of $L_d$ has $I$ vertices to the right of its incoming bridge, including
its own vertices and all later blocks, the difference across its incoming bridge is $I$, and its two
nonzero internal differences
are $(I-1)/\alpha$ and $(I-d)/\alpha$. Its contribution to $u^TA_{C_b}u=T_{C_b}(0)$ is
exactly
\[
 I^2+\frac{(I-1)^2+(I-d)^2}{\alpha}
 =\frac D\alpha I^2-\frac{2D}\alpha I+\frac{1+d^2}\alpha.
\]
Here $I=b,b-D,\ldots$, down to a value bounded in terms of $d$. Summing the squares gives
$b^3/(3\alpha)+O_d(b^2)$; the end graph has bounded order and does not affect the
estimate.
\end{proof}

\begin{lemma}\label{or:lem:comparisons}
Suppose $d$ is odd. For every sufficiently large even $m$, there is a connected 
subgraph $J_m$ of order $m$ with two specified vertices of degree $d-1$,
all other degrees $d$, and
\[
 R_{J_m}\ge m/\alpha-C_d.
\]
For every sufficiently large odd $b$, there is a connected  end
subgraph $C_b$, with its specified vertex of degree $d-1$ and all other
degrees $d$, such that
\[
 T_{C_b}(0)\ge b^3/(3\alpha)-C_db^2.
\]
\end{lemma}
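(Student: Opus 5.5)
We follow the construction in the proof of \Cref{gm:lem:comparisons}, but replace the terminal complete graph, which is not $d$-regular, by one of the bounded pieces $Q_N$ or $E_d$ from \Cref{sec:upper}. These have exactly the prescribed degree deficiencies. Recall that $Q_N$, for even $N$ with $D\le N\le 2d$, has two vertices of degree $d-1$ and all others of degree $d$. Likewise $E_d$, of order $d+2$ (odd), has a single vertex of degree $d-1$. In $L_d$ the two singleton vertices have degree $d-1$, and every other vertex has degree $d$.

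For $J_m$ with $m$ even and large, we write $m=kD+N$ with $N\in\{D,D+2,\ldots,2d\}$. This is possible because $D$ is even and these values of $N$ cover every even residue modulo $D$. We take $k$ copies of $L_d$ followed by $Q_N$, joined by bridges. Each bridge uses one deficient vertex at each end, so every such vertex reaches degree $d$. The two unused deficient vertices are the outer singleton of the first $L_d$ and the remaining degree-$(d-1)$ vertex of $Q_N$; we take these as the specified vertices $s,t$. Then $s,t$ have degree $d-1$, every other vertex has degree $d$, and the graph is connected and simple.

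Effective resistances add in series across bridges. This gives
\[
R_{J_m}(s,t)=k\frac{2}{\alpha}+k+R_{Q_N}(\cdot,\cdot)\ge\frac{kD}{\alpha}\ge\frac{m}{\alpha}-\frac{2d}{\alpha},
\]
since $N\le 2d$ and $R_{Q_N}\ge0$. This proves the first claim with $C_d=2d/\alpha$.

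For $C_b$ with $b$ odd and large, we write $b=kD+N+(d+2)$. The right-hand side has the parity of $N+d+2$, which is odd because $N$ is even and $d$ is odd. We again choose $N$ in $\{D,\ldots,2d\}$ to fix the residue, so that every sufficiently large odd $b$ is reached. The chain is $k$ copies of $L_d$, then $Q_N$, then $E_d$, joined by bridges. Its root $r$ is the outer singleton of the first $L_d$, which has degree $d-1$; all other vertices have degree $d$.

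To bound $T_{C_b}(0)=\1^\top A_{C_b}^{-1}\1$, we use the variational formula
\[
T_H(0)=\max_{\bu}\bigl(2\1^\top\bu-\bu^\top A_H\bu\bigr),
\]
so any trial vector gives a lower bound. Alternatively one can solve exactly, as in \Cref{gm:lem:comparisons}. The solution $\bu=A^{-1}\1$ has the following flow interpretation. Across a bridge, the difference equals the number of vertices beyond it, because the harmonic-type equation $A\bu=\1$ summed over the far side gives flow $I$ through any bridge. Within each $L_d$, the computation of \Cref{gm:lem:comparisons} applies verbatim; it used only the structure of that block and the count $I$ of vertices to its right. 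Hence each $L_d$ block with $I$ vertices to its right contributes exactly $\frac{D}{\alpha}I^2-O_d(I)$ to $\bu^\top A\bu=T(0)$, with $I=b,b-D,\ldots$. The tail $Q_N\cup E_d$ has bounded order and contributes a nonnegative amount. Summing over the blocks gives
\[
T_{C_b}(0)\ge \frac{D}{\alpha}\sum_{j}(b-jD)^2-O_d(b^2)=\frac{b^3}{3\alpha}-C_db^2.
\]

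The main point to check is the bookkeeping of residues, so that both parity classes are reached using only the bounded gadgets. The other point is that the exact per-block contribution from \Cref{gm:lem:comparisons} does not depend on what lies after the chain, since only the vertex count $I$ enters. Both follow from the flow identity across bridges.
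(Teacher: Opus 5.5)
Your proposal is correct and follows the paper's proof essentially step for step. It uses the same chains ($k$ copies of $L_d$ closed off by $Q_N$, respectively by $Q_N$ followed by $E_d$), the same residue bookkeeping modulo the even number $D$, series addition of resistances across bridges, and the exact per-block computation of $T_{C_b}(0)$ via the identity that the difference across each bridge equals the number of vertices beyond it. Your explicit constant $C_d=2d/\alpha$, obtained from $R_{Q_N}\ge0$ alone, is a minor sharpening of the paper's appeal to the finitely many graphs $Q_N$.
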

\begin{proof}
Let $Q_N$ be the connected graph constructed in \Cref{sec:upper},
where $N$ is even and $D\le N\le2d$. Its only deficient vertices
are $0,1$, each of degree $d-1$.
For every sufficiently large even $m$, choose $N$ in this range with
$m\equiv N\pmod D$, and put $m=kD+N$. Join $k$ copies of $L_d$ and
$Q_N$ by bridges. The specified vertices are the two unused deficient
vertices at the ends. All other deficient vertices receive one bridge.
This gives the exact stated degrees. Minimizing with prescribed
endpoint values successively along the chain gives
\[
 R_{J_m}=k(1+2/\alpha)+R_{Q_N}=kD/\alpha+R_{Q_N}
        \ge m/\alpha-C_d.
\]
Only the finitely many bounded graphs $Q_N$ enter the last constant.

Let $E_d$ be the end graph of order $d+2$ constructed in
\Cref{sec:upper}. Given a sufficiently large odd $b$, write
\[
 b=kD+N+(d+2)
\]
with $N$ as above. Join $k$ copies of $L_d$, then $Q_N$, then $E_d$.
Specify the first unused deficient vertex. Again every other vertex
has degree exactly $d$. The last two subgraphs have bounded total order.

The calculation in \Cref{gm:lem:comparisons} applies to each
copy of $L_d$ and its incoming bridge: summing $A_{C_b}\bu=\1$
on the suffix gives the same sum of differences $I$ across the incoming bridge,
irrespective of the bounded subgraph at the end. Thus these copies contribute
$b^3/(3\alpha)+O_d(b^2)$, and the remaining contribution is
nonnegative. This proves the required bound for $T_{C_b}(0)$.
\end{proof}

The parities in this lemma are exactly the parities of portions of an
odd-regular graph. An interior portion with two outside edges has even
order, since $d|H|-2=2e(H)$; an end subgraph with one outside edge has
odd order, since $d|H|-1=2e(H)$. Thus no order adjustment is required
when these subgraphs are substituted.

For the remainder of the proof, $J_m$ and $C_b$ denote the
comparison subgraphs in the chosen class. Substitution is therefore
always at the same order. In the regular class the two specified vertices
of $J_m$, or the one specified vertex of $C_b$, have degree exactly $d-1$;
reattaching the boundary bridges restores every degree to $d$.

\subsection{The order of every exceptional interval is bounded}
We show that every exceptional interval has bounded size: a larger one
could be replaced to lower algebraic connectivity.

By \Cref{lem:comparisonupper} and exact minimality, a minimizing
graph in the chosen class satisfies
\[
 \mu(G)\le\frac{(d-1)\pi^2}{n^2}+\frac{A_d}{n^3}.
\]
For sufficiently large $n$, apply \Cref{thm:stability} with
$\varepsilon=A_d/n$. We obtain a nondecreasing Fiedler order in
which the gaps between consecutive cuts of size one that induce
$L_d$ cover $n-O_d(n^{2/3})$ vertices. In particular,
$\mu(G)=O_d(n^{-2})$. This deduction uses only the comparison upper
bound and does not use \Cref{thm:minimum}. The proof of \Cref{sec:stability} selects these blocks
from the list of all ordered bridges; this ordered form, rather than just the number of
covered vertices, is what we use here. Only the consequence that the uncovered number is
$o_d(n)$ is needed below.

In this ordering, call a gap between consecutive cuts of size one a standard gap if it induces
$L_d$ with its required attachments. All other gaps, as well as the initial and final
portions, are exceptional. A maximal consecutive union of exceptional gaps is an exceptional interval;
include the initial or final portion in the adjacent exceptional interval when
appropriate. Every such portion is connected, by the observation following
\Cref{gm:lem:bridgeextrema}. An interior portion has exactly
two outside edges, and an end subgraph has exactly one.

\begin{lemma}\label{gm:lem:smallestgap}
In an exact minimizer, every gap of order $D$ between consecutive
ordered bridges is standard. Every exceptional interior gap has at least $D+1$
vertices; in the odd-regular class its order is even and at least $D+2$.
\end{lemma}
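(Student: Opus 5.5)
The plan is to combine the dichotomy of \Cref{lem:bridgeblocks} with a one-edge deletion, and then use degree counting and parity for the size bounds.

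\emph{Step 1: reduce to excluding $K_D$.} Take a gap $T$ of order $D$ between two consecutive ordered bridges. \Cref{lem:bridgeblocks} shows that $G[T]$ is either $L_d$ with the required attachments, which is the standard case, or $K_D$. So only $K_D$ must be excluded. By \Cref{gm:lem:attachments}, the left and right attachment vertices $u,v$ of $T$ are distinct. In the $K_D$ case they are adjacent inside $T$ and each has degree $d+1$ in $G$. In the odd-regular class this already contradicts regularity.

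\emph{Step 2: the minimum-degree class.} Delete the internal edge $uv$. The gap then induces $L_d$. Every degree stays at least $d$. The graph stays simple, and it stays connected because $L_d$ is connected. The quadratic form of the Fiedler vector $\x$ drops by $(x_u-x_v)^2$, while the mean and norm are unchanged. It therefore suffices to show $x_u\ne x_v$. By \Cref{gm:lem:bridgeextrema}, $x_u$ and $x_v$ are the minimum and maximum of $\x$ on $T$, so if they were equal, $\x$ would be constant on $T$.
\begin{itemize}
\item The eigen-equation at an internal vertex $w\in T\setminus\{u,v\}$, all of whose neighbours lie in $T$, would then give $\mu x_w=0$, so $\x$ vanishes on $T$.
\item The eigen-equation at $u$ would then read $0=x_u-x_{u'}$, where $u'$ is the outside endpoint of the left bridge.
\item But \Cref{gm:lem:bridgeextrema} gives $x_{u'}<x_u$ strictly, a contradiction.
\end{itemize}
Hence $x_u<x_v$, the Rayleigh quotient strictly decreases, and by \eqref{eq:centeredRayleigh} this contradicts minimality. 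So every gap of order $D$ is standard.

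\emph{Step 3: order bounds for exceptional interior gaps.} Let $H$ be an interior portion with exactly two outside edges. Counting degrees gives
\[
 d|H|-2\le 2e(H)\le |H|(|H|-1).
\]
For $|H|\le d$ the right side is at most $d(d-1)<d^2-2\le d|H|-2$ once $|H|=d$, and it is even smaller for $|H|<d$. Hence $|H|\ge D$. By Step 1, order exactly $D$ forces $H$ to be standard, so an exceptional gap has at least $D+1$ vertices. In the odd-regular class, $d|H|-2=2e(H)$ with $d$ odd forces $|H|$ even. Since $D=d+1$ is even, $D+1$ is odd, so the order is at least $D+2$.

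The only delicate point is the strictness in Step 2. It is handled by the strict inequality $x_{u'}<x_u$ for the bridge endpoints, which comes from \eqref{eq:prefixsum} via \Cref{gm:lem:bridgeextrema}.
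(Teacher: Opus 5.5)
Your proof is correct and follows the paper's argument. You reduce to excluding $K_D$, then dismiss it by regularity in the odd-regular class, or by deleting the edge joining the two distinct attachments, with $x_u<x_v$ forced by bridge extremality, the eigen-equations at an interior vertex and at an attachment, and the strict difference across the incoming bridge; you finish with the parity count $d|H|-2=2e(H)$. The differences are cosmetic (you cite \Cref{lem:bridgeblocks} for the dichotomy and count degrees in $H$ directly rather than invoking \eqref{eq:cutcount}), but for $|H|<d$ the phrase ``even smaller'' does not justify the step, since $d|H|-2$ also shrinks; what is needed is $h(D-h)>2$ for $1\le h\le d$, which holds because $h(D-h)\ge d\ge3$.
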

\begin{proof}
The attachments are distinct by \Cref{gm:lem:attachments}.
A gap of order $D$ has exactly two outgoing edges. Degree counting
shows that it induces $K_D$ or $K_D-e$. In the latter case the
deficient vertices must receive the two bridges. In the regular
class $2e(H)+2=dD$ already excludes $K_D$. In the minimum-degree
class, both attachment vertices of $K_D$ have degree $d+1$, so
their joining edge may be deleted. Their Fiedler components are distinct: by \Cref{gm:lem:bridgeextrema}
they are the minimum and maximum on the gap, and equality would make every component on
the gap constant. The equations at an interior vertex and an attachment would then
contradict the strict difference on the incoming bridge. Deleting the edge strictly lowers
the Rayleigh quotient. Finally, two cuts of size one are separated by at least $D$
positions, by the cut-counting inequality. In odd regular degree,
$d|H|-2=2e(H)$ makes the order even, proving the last assertion.
\end{proof}

\begin{theorem}\label{gm:thm:boundedintervals}
In each chosen class there are constants $B_d,N_d$ such that every
exceptional interval of every exact minimizer of order $n\ge N_d$
contains at most $B_d$ vertices.
\end{theorem}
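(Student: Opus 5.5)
We treat interior exceptional intervals and end portions separately; in both cases a large exceptional portion is replaced by a comparison subgraph of the same order, built mainly from $L_d$ blocks, and we show that $\mu$ strictly drops. We first record scale facts. By \Cref{thm:stability} with $\varepsilon=A_d/n$, $\mu=\Theta_d(n^{-2})$, and the standard gaps form chains along which the Fiedler vector nearly follows the path profile. Exceptional intervals contain $O_d(n^{2/3})$ vertices in total, so any single one has order $m\le C_dn^{2/3}$. Hence $\mu m^2\to0$, and we may treat $\mu$ as a small perturbation on the scale of an exceptional portion.

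\emph{Interior intervals.} Let $H$ be an exceptional interval of order $m$, with attachments $s,t$ given by \Cref{gm:lem:attachments}. By \Cref{gm:lem:smallestgap}, every gap inside $H$ has more than $D$ vertices. Thus \Cref{gm:lem:fixedloss} gives $\E_H(\x)\ge(\alpha+\eta_d)(x_t-x_s)^2/m$. In other words, $H$ behaves like a conductor whose effective resistance is at most $m/(\alpha+\eta_d)$, strictly below the value $m/\alpha-C_d$ achieved by $J_m$ of \Cref{gm:lem:comparisons} or \Cref{or:lem:comparisons}. (Parity in the odd case is automatic by the remark after those lemmas.) We substitute $J_m$ for $H$ with the same two bridges and build a trial vector $\y$. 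Outside $H$ we keep $\x$; on $J_m$ we use the harmonic interpolation with endpoint values $x_s,x_t$, shifted so that $\sum_{J_m}y=\sum_Hx$. The mean is then preserved. The energy drops by at least $c\,m^{-1}(x_t-x_s)^2$ (up to $C_d$ terms). The norm changes by at most $O(m)\cdot\max_H|x|\cdot\operatorname{osc}_H$ plus the squared oscillation terms. Here we need a lower bound on $x_t-x_s$. Summing the eigen-equations over the prefix, as in \eqref{eq:prefixsum}, gives bridge drops $\mu f_i$ with $f_i\gtrsim\sqrt n$ away from the ends. Since $H$ has energy at least $(x_t-x_s)^2(\alpha+\eta_d)/m$, comparison with the flux shows $x_t-x_s\gtrsim \mu f\,m$. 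The energy gain is then $\gtrsim \mu^2f^2m$, while the norm loss is at most $\mu\cdot O(m)\cdot(\max|x|)(x_t-x_s)+\dots$. This is of smaller order only when the portion is not too near the ends. Near the ends we instead use the end argument. The key check is that the gain is linear in $m$ with a $d$-dependent constant, while the losses are $O_d(1)$ times lower-order factors. That yields $m\le B_d$.

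\emph{End portions.} Let $H$ be an end portion of order $b$ rooted at $r$. We use \Cref{gm:lem:replacecap} with $H'=C_b$. The Fiedler value at the outside neighbour $s$ is nonzero, since $x_s$ is near an extreme of $\x$ and $|x_s|\gtrsim n^{-1/2}$ by the path profile. Since $\mu b^2\le C_dn^{-2/3}\cdot$const$\,\to0$, \Cref{gm:lem:response} applies to both graphs, and it gives $F(\mu)=1+b\mu+\mu^2T(\mu)$. Because $b$ is equal for both graphs, $\Delta=\mu^2(T_{C_b}(\mu)-T_H(\mu))$. \Cref{gm:lem:weightedloss}, with separation supplied by \Cref{gm:lem:smallestgap}, gives $T_H(\mu)\le b^3/(3\alpha)-c_{d,\xi}b^3+C_db^2+4\mu b^5$. \Cref{gm:lem:comparisons} together with the monotonicity $T_{C_b}(\mu)\ge T_{C_b}(0)$ gives $T_{C_b}(\mu)\ge b^3/(3\alpha)-C_db^2$. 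Since $\mu b^5\le b^3\cdot\mu b^2=o(b^3)$, we get $\Delta>0$ once $b\ge B_d$. This contradicts minimality.

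\emph{Main obstacle.} The hard step is the interior case, specifically controlling the normalization change against the resistance gain uniformly in the position of $H$. I would try to avoid norm bookkeeping altogether. The plan is to reduce the interior case to a two-terminal Schur-complement statement analogous to \Cref{gm:lem:replacecap}: eliminate $H$ at $\mu$ to get a $2\times2$ response matrix in $(s,t)$, expand it as in \Cref{gm:lem:response}, show its leading part is the conductance $1/R_H\ge(\alpha+\eta_d)/m$, and bound the $O(\mu m^3)$ corrections using $\mu m^2\to0$. Lowering the conductance to $1/R_{J_m}$, with the corrections matched, then adds a negative-semidefinite rank-one term. The inertia argument of \Cref{gm:lem:replacecap} would produce a second negative eigenvalue, provided the Fiedler vector has $x_s\neq x_t$, which holds by the flux identity \eqref{eq:prefixsum}.
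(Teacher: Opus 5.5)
\textbf{End portions.} Your treatment of end portions is essentially the paper's: Lemma~\ref{gm:lem:replacecap} with $C_b$, the expansion $F=1+b\mu+\mu^2T$, Lemma~\ref{gm:lem:weightedloss} against Lemma~\ref{gm:lem:comparisons}, and $\mu b^2\to0$. One point should be stated explicitly. The separation hypothesis of Lemma~\ref{gm:lem:weightedloss} refers to the order of $(A_H-\mu I)^{-1}\1$. It transfers from the Fiedler order because $\x_H=a_0(\1+\mu(A_H-\mu I)^{-1}\1)$ with $a_0\ne0$.

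\textbf{Interior case: genuine gap.} Three steps fail.

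First, $x_t-x_s\gtrsim\mu fm$ is false. Combining $\E_H(\x)\ge(\alpha+\eta_d)\Delta^2/m$ with the flux identity $\E_H(\x)=\sum_i a_i\mu f_i\le\mu f_{\max}\Delta$ gives only an upper bound $\Delta\lesssim\mu fm$. If $H$ has small effective resistance (say it is dense), then roughly $\Delta\approx\mu fR_H$ is much smaller, and your gain bound $\gtrsim\mu^2f^2m$ collapses.

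Second, the mean-preserving shift moves both bridge endpoints by $c=\Delta(\Sigma_H-\Sigma_J)/m$, where $\Sigma$ is the sum of the harmonic profile normalized to run from $0$ to $1$. This $c$ can be of order $\Delta$, which costs $\Theta(\Delta^2)$ on the bridges and swamps the $O(\Delta^2/m)$ gain. Do not shift: the mean change costs only $(\sum y)^2/n\le m^2\Delta^2/n$ in centered norm.

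Third, the Schur-complement fallback cannot be uniform in position. In \eqref{eq:boundary-expansion}, the first-order term $-\mu\|\mathbf f_0\|^2$ with $\mathbf f_0=u\1+(w-u)\phi$ contains $2\mu u(w-u)\sum_v\phi_v$. The sum $\sum_v\phi_v$ differs between $H$ and $J_m$ by $\Theta(m)$. At the Fiedler boundary values this is $\Theta(\mu mM\Delta)$, where $M$ is the absolute level, not $\Delta$. The conductance gain is of order $\mu f\Delta$ with $f\approx tM$, where $t$ is the number of vertices on the nearer side. So the correction is negligible only when $t\gg m$, and $\mu m^2\to0$ does not help. It is the same norm bookkeeping in another form.

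\textbf{Missing idea.} Compare gain and loss at the common scale $\mu M\Delta$. Insert $J_m$ without shifting, using harmonic values in $[a,b]$. The quadratic form then drops by at least $\chi_d\E_H(\x)$. The flux identity, summed over the prefix (if $M=|a|$, $a\le0$) or the suffix (if $M=b\ge0$), gives $\E_H(\x)\ge\mu(t-m)M\Delta$. The centered-norm loss is at most $4mM\Delta$. Hence the interior replacement succeeds exactly when $t\ge K_dm$ with $K_d>1+4/\chi_d$.

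The threshold must be relative to $m$, not ``away from the ends'' in your sense $f\gtrsim\sqrt n$. Otherwise the near-end case would produce end subgraphs of order $\Theta(n)$, on which $\mu b^2$ is not small and Lemmas~\ref{gm:lem:response} and~\ref{gm:lem:weightedloss} fail.

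When $t<K_dm$, absorb everything between $H$ and the nearer end into one end subgraph of order $b_0\le(K_d+1)m=o(n)$. Apply Lemma~\ref{gm:lem:weightedloss} with $\xi=1/(4(K_d+1))$, so that the required prefix of length $\lfloor\xi b_0\rfloor\le\lfloor m/2\rfloor$ lies inside $H$. This restriction is necessary because the standard $L_d$ gaps farther out violate the separation hypothesis, so the lemma cannot be applied to the whole absorbed end subgraph.
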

\begin{proof}
Otherwise there is a sequence of exact minimizers with an exceptional interval of order
$m\to\infty$. \Cref{thm:stability} gives $m=o(n)$, and in fact the total number of
vertices in all exceptional intervals is $o(n)$. Put $\mu=\mu(G)$ and let $\x$ be the
corresponding mean-zero unit Fiedler vector. By \Cref{lem:smallentries},
$\max_v|x_v|\le C_dn^{-1/2}$, and at least $c_dn$ components have
each strict sign. These facts will be used in both replacements.

First consider an interior exceptional interval $H$, with $p$ vertices to its left and $q$
to its right. By \Cref{gm:lem:bridgeextrema}, its attachment vertices have the minimum and maximum Fiedler
components on $H$, denoted by $a,b$, respectively. They satisfy $a<b$: equality would make the interval constant; an
interior equation would make this constant zero, and an attachment equation would
contradict the strict difference on its outside bridge. Let $\Delta=b-a$,
$M=\max\{|a|,|b|\}$, and $t=\min\{p,q\}$. Lemmas~\ref{gm:lem:attachments}, \ref{gm:lem:fixedloss}
and~\ref{gm:lem:smallestgap} give
\[
 E_H:=\E_H(\x)\ge(\alpha+\eta_d)\Delta^2/m.
\]
Replace $H$ by the same-order subgraph $J_m$ in the chosen class and
choose the components minimizing its quadratic form with the same
endpoint values. In the regular class $m$ is even, the two
attachments in $H$ have internal degree $d-1$, and all other
internal degrees are $d$. The replacement has exactly these degree
deficiencies. Reattaching the bridges gives a connected 
$d$-regular graph. In the minimum-degree class the same construction
preserves the required degree lower bounds. For large $m$, its quadratic form is at most
$(1-\chi_d)E_H$, with $\chi_d>0$ fixed. Every new component lies in $[a,b]$. The loss of
centered squared norm is therefore at most
\[
 2mM\Delta+m^2\Delta^2/n\le4mM\Delta.
\]
For a cut inside $H$, the sum of the differences on its crossing
edges is $-\mu\sum_{v\le i}x_v$. If $M=|a|$ and $a\le0$, it is at least $\mu(p-m)M$; if
$M=b\ge0$, use the suffix to obtain $\mu(q-m)M$. The two boundary bridges do not cross an interior cut with a nonzero
consecutive difference. Their attachment values are the minimum and maximum
on $H$; cuts through a tie at either endpoint have zero consecutive difference. Hence
\[
 E_H=\sum_{i\text{ inside }H}a_i\left(-\mu\sum_{v\le i}x_v\right)
 \ge\mu(t-m)M\Delta.
\]
If $t\ge K_dm$, with $K_d>1+4/\chi_d$, the decrease in the quadratic form exceeds $\mu$ times the possible
norm loss. This strictly improves the Rayleigh quotient, a contradiction.

It remains $t<K_dm$, or that the exceptional interval meets an end. In the former case
include the entire nearer end up to the far bridge of $H$, obtaining an end subgraph of order
$b_0\le(K_d+1)m=o(n)$. In the latter case take that end subgraph directly. Reverse the order
when needed, so that its first $m$ vertices from its attachment belong to exceptional gaps. The sign
count in \Cref{lem:smallentries} shows that its outside attachment has a nonzero
component and that every component on this end subgraph has the same sign. Writing the
outside value as $a_0$, its vector is
\[
 \x_H=a_0\bigl(\1+\mu(A_H-\mu I)^{-1}\1\bigr).
\]
Since $a_0\ne0$, this formula is a strictly monotone affine
function of the components of $(A_H-\mu I)^{-1}\1$. Starting at
the attachment and moving towards the end therefore gives exactly
the order in \Cref{gm:lem:weightedloss}, after reversing the
original order when necessary. Ties are kept in that inherited
order. Thus its hypothesis about cuts is a property of the chosen
Fiedler ordering, not an assumption about a new ordering. The first $\lfloor m/2\rfloor$
differences have successive cuts of size one separated by at least $D+1$, by
\Cref{gm:lem:smallestgap}. Choose the fixed value $\xi=1/(4(K_d+1))$. Then
$\lfloor\xi b_0\rfloor\le\lfloor m/2\rfloor$, so the same separation
holds on the prefix required by \Cref{gm:lem:weightedloss}.

The end replacement $C_{b_0}$ is admissible in the chosen class.
In the regular case $b_0$ is odd and its single deficient vertex
receives the outside bridge, restoring degree $d$; in either case
the graph stays connected and simple. Since $\mu b_0^2=o(1)$,
\Cref{gm:lem:weightedloss} and the corresponding comparison
lemma imply, for sufficiently large $m$,
\[
 T_H(\mu)<T_{C_{b_0}}(0)\le T_{C_{b_0}}(\mu).
\]
The two subgraphs have the same order. The difference of their values of $F$ is positive, and
\Cref{gm:lem:replacecap} therefore gives a strict improvement, another contradiction.
No exceptional interval can consequently have unbounded order along such a sequence. This
is equivalent to the assertion of the theorem.
\end{proof}

\subsection{Location of the bounded exceptional subgraphs}
We show that an exceptional piece cannot stay deep inside the chain,
so all such pieces lie near the ends.

For this comparison it is convenient to put a point at the middle of each of the two
boundary bridges of an interior subgraph. Each half-edge has coefficient $2$ in the quadratic
form and the inserted middle points do not contribute to the squared norm. Minimizing over its value
replaces the two half-edges by the original unit edge. The comparison below keeps the
boundary middle-point values fixed; the quadratic form of the resulting ordinary graph is no
larger than the quadratic form before minimizing those points.

\begin{lemma}\label{gm:lem:commutator}
Let $H$ be a connected subgraph of order $m$ with distinct specified
vertices satisfying the degree conditions of \Cref{gm:thm:terminal},
and let $C=L_d$. Include one half
of each boundary bridge in each subgraph, and put
\[
 r_H=1+R_H,\qquad r_C=D/\alpha,\qquad
 \delta_H=mr_C-Dr_H.
\]
Let $z_v$ be the components minimizing the quadratic form in the augmented subgraph, with
boundary values $0,r_H$. Its boundary difference and quadratic form both equal $r_H$. Put
\[
 S_H=\sum_{v\in H}z_v-\frac{mr_H}{2}.
\]
For boundary values $a,b$ and sufficiently small $t\ge0$, let
$q_{HC}(t;a,b)$ be the minimum of the quadratic form minus $t$ times
the squared norm on the two consecutive subgraphs. Define $q_{CH}$ in the same way. With
$r=r_H+r_C$, $c=(a+b)/2$ and $J=(b-a)/r$, one has
\begin{equation}\label{gm:eq:commutator}
 q_{HC}(t;a,b)-q_{CH}(t;a,b)
 =2t\delta_H cJ+2tr_CS_HJ^2+O_{H,d}\bigl(t^2(a^2+b^2)\bigr).
\end{equation}
For a fixed bound on $m$, the remainder and its validity interval can be chosen uniformly
over all permitted subgraphs and orientations; this follows from finiteness of the family. If $H$ is not a chain of $L_d$ subgraphs, then $\delta_H>0$.
\end{lemma}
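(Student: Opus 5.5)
The plan is to read both orderings through the fixed-boundary expansion \eqref{eq:boundary-expansion}. The first-order term in $t$ then reduces to a difference of squared norms of the $t=0$ minimizers, and these can be computed explicitly from the unit-current profiles of $H$ and $C$.

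\emph{Step 1: the normalized profiles.} In the augmented $H$, the two half-edges have coefficient $2$, so each has resistance $1/2$. The resistance between the two middle points is therefore $1/2+R_H+1/2=r_H$. The minimizer with boundary values $0,r_H$ carries unit current, since its energy is $r_H^2/r_H=r_H$. For $C=L_d$ we have $r_C=1+2/\alpha=D/\alpha$. By the symmetry of $L_d$ under the reflection swapping its singletons, its normalized profile $z^C$ satisfies $\sum_{v\in C}z^C_v=Dr_C/2$. By definition, $\sum_{v\in H}z_v=mr_H/2+S_H$.

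\emph{Step 2: the $t=0$ minimizers.} For boundary values $a,b$ on the concatenation, the harmonic minimizer carries current $J=(b-a)/r$. In the order $HC$ it equals $a+Jz_v$ on $H$ and $a+Jr_H+Jz^C_v$ on $C$; the order $CH$ is analogous. In both orders $q(0)=(b-a)^2/r$, because resistances add in series, so the zeroth-order terms cancel. The boundary middle points carry no squared norm, so $\omega=0$ in \eqref{eq:boundary-expansion}. Consequently
\[
 q_{HC}(t)-q_{CH}(t)=-t\bigl(\|\mathbf f_0^{HC}\|^2-\|\mathbf f_0^{CH}\|^2\bigr)+O(t^2).
\]

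\emph{Step 3: the norm difference.} For a piece with entry value $e$, order $k$ and profile $z$, expand $\sum_v(e+Jz_v)^2=ke^2+2eJ\sum_v z_v+J^2\sum_v z_v^2$. Three kinds of terms appear:
\begin{itemize}
 \item the $J^2\sum z_v^2$ terms coincide in the two orders;
 \item the $a^2$ terms cancel;
 \item the remaining terms contribute $-2\delta_H aJ$ from the $aJ$ terms and $-(\delta_H r+2r_CS_H)J^2$ from the $J^2$ terms.
 \end{itemize}
Since $c=a+rJ/2$, we have $2aJ+rJ^2=2cJ$, so the norm difference is $-2\delta_HcJ-2r_CS_HJ^2$. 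This gives the displayed main terms of \eqref{gm:eq:commutator}.

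\emph{Step 4: the remainder.} The excess matrix $B$ is the Dirichlet Laplacian of the concatenation, which is positive definite. Take $t$ below half its smallest eigenvalue. The $t^2$ term in \eqref{eq:boundary-expansion} is then at most $2t^2\|B^{-1}\|\|\mathbf f_0\|^2$, and $\mathbf f_0$ is linear in $(a,b)$, which gives $O(t^2(a^2+b^2))$. For a fixed bound on $m$ there are finitely many graphs $H$ with specified vertices and orientations. Taking the minimum eigenvalue and maximum norm over this finite family makes both the constant and the validity interval uniform.

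\emph{Step 5: positivity of $\delta_H$.} We have $\delta_H=D\bigl(m/\alpha-1-R_H\bigr)$. By \Cref{gm:thm:terminal} this is nonnegative, with equality exactly when $H$ is an $L_d$ chain with the stated attachments. Hence $\delta_H>0$ otherwise.

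The main obstacle is the bookkeeping in Step 3: the half-edge convention has to be tracked so that $r_H$, $r_C$ and the junction value $a+Jr_H$ are consistent. The rest is routine.
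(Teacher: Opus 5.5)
Your proposal is correct and follows the paper's proof essentially step for step. It uses the same series-composition minimizers $a\1+J\z$, the same application of \eqref{eq:boundary-expansion} with $\omega=0$, the same symmetry $\sum_{v\in C}z_v=Dr_C/2$ feeding the norm-difference computation, and \Cref{gm:thm:terminal} for $\delta_H>0$. The one point to state more carefully is that the junction middle point between $H$ and $C$ is a free variable carrying no squared norm. So, as the paper does, eliminate it before choosing $B$ in \eqref{eq:boundary-expansion}, rather than taking the full Dirichlet Laplacian with $B-tI$. The resulting Schur complement has smallest eigenvalue no smaller, so your remainder bound is unaffected.
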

\begin{proof}
The last assertion is \Cref{gm:thm:terminal}. To obtain the expansion, let $B$ be the positive definite
matrix on the free graph vertices after fixing the two outside
middle-point values and eliminating the internal middle point.
The middle points have no weight in the squared norm.
Apply \eqref{eq:boundary-expansion} with $\omega=0$ and with
$\mathbf f_0$ the minimizing vector at $t=0$. For
$0\le t\le\lambda_{\min}(B)/2$, the inverse has norm at most
$2/\lambda_{\min}(B)$, while $\|\mathbf f_0\|^2=O_{H,d}(a^2+b^2)$.
This proves the stated remainder, uniformly for a finite family.
For both orders, the minimum quadratic form is $q(0)=(b-a)^2/r$. The corresponding minimizing
coordinates in $HC$ are $z_v$ on $H$ and $r_H+z_v$ on $C$; in $CH$ they are $z_v$ on $C$
and $r_C+z_v$ on $H$. Symmetry gives $\sum_Cz_v=Dr_C/2$. The difference of the first
coordinate sums is $-\delta_H$, and that of the second coordinate sums is
\[
 -r\delta_H-2r_CS_H.
\]
As $\mathbf f_0=a\1+J\z$, the difference of the squared norms is $-2\delta_HcJ-2r_CS_HJ^2$.
Substitution proves \eqref{gm:eq:commutator}.
\end{proof}

The minimizing equation in \eqref{eq:boundary-expansion} also gives
the following error estimate for bounded subgraphs.
Let $\mathbf f_0$ minimize the quadratic form with fixed boundary values,
and let $\mathbf f$ be the restriction of a Fiedler vector. If $B$ is the
matrix on the unfixed graph vertices, then
\begin{equation}\label{gm:eq:harmonicerror}
 B(\mathbf f-\mathbf f_0)=\mu \mathbf f.
\end{equation}
The path estimate in \Cref{gm:lem:response} bounds $\|B^{-1}\|$
in terms of the number of vertices. Hence, for bounded subgraphs,
each component of $\mathbf f-\mathbf f_0$ is $O(\mu M)$, with a constant depending
only on $d$ and the bound on the order. Here $M$ bounds
the original components and the boundary values. The same bound
holds for the change of the sum of differences at either boundary.
The inserted middle points are eliminated before applying this
identity; they have no term in the squared norm.

\begin{lemma}\label{gm:lem:defectmoves}
Fix a bound $B$ and a constant $C_0>0$. There are constants
$T_{d,B,C_0},N_{d,B,C_0}$ with the following property. Let $G$ have
minimum degree at least $d$, order $n\ge N_{d,B,C_0}$, and
$\mu(G)\le C_0n^{-2}$. Suppose its ordered bridges
meet the extremal components on their two sides. Let an interior
exceptional interval $H$ have distinct attachments, at most $B$ vertices,
and a standard gap immediately on both sides. If neither interchange of
$H$ with a neighbouring standard gap decreases algebraic connectivity,
then at least one outside part of $H$ has at most $T_{d,B,C_0}$ vertices.
\end{lemma}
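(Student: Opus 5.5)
The plan is to compare $G$ with the two graphs obtained by interchanging $H$ with the standard gap $C=L_d$ immediately to its left or to its right. I will argue through the reduced exterior matrix exactly as in the proof of \Cref{gm:lem:replacecap}. Delete the $m+D$ vertices of $H\cup C$, keeping the two outside middle points of the boundary bridges as fixed boundary variables. Then $L(G)-\mu I$ reduces to an exterior form plus $q_{HC}(\mu;\cdot,\cdot)$ or $q_{CH}(\mu;\cdot,\cdot)$, according to the order. The interchanged graph $G'$ is admissible: degrees, simplicity and connectivity are preserved, since $H$ has distinct attachments and both pieces are joined by single bridges. As in \eqref{gm:eq:replacechange}, if the reduced form of $G'$ is strictly smaller than that of $G$ on the restriction of the Fiedler vector, then $L(G')-\mu I$ has two negative eigenvalues and $\mu(G')<\mu(G)$. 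So the hypothesis that neither interchange helps gives two inequalities: for the right neighbour (order $HC\to CH$), and for the left neighbour (order $CH\to HC$), the change of the reduced form evaluated at the actual boundary values is $\ge0$.

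Next, evaluate these changes with \eqref{gm:eq:commutator}, taking $t=\mu$. For bounded $B$ the remainder is uniform, and it is $O(\mu^2M^2)$ with $M=O(n^{-1/2})$ by \Cref{lem:smallentries}. The boundary values are the middle-point values of the actual Fiedler vector. By \eqref{gm:eq:harmonicerror}, replacing the true interior values by the harmonic ones costs only $O(\mu M)$ per component. Write $c_\pm,J_\pm$ for the centre and current of the two pairs. Here $J_\pm$ is, up to $O(\mu M)$, the common sum of differences across the bridges, namely $\mu f_i$ by \eqref{eq:prefixsum}, and the two currents differ by $O(\mu M)$. The two inequalities become
\[
 \delta_H c_+J\ \le\ -r_CS_HJ^2+O(\mu M^2+\mu MJ),\qquad
 \delta_H c_-J\ \ge\ -r_CS_HJ^2-O(\mu M^2+\mu MJ).
\]
Since $c_+-c_-$ is the value drop across one block, of order $J$, subtracting the two inequalities yields
\[
 \delta_H|c|\,J\ \le\ C_{d,B}\bigl(J^2+\mu M^2+\mu MJ\bigr),
\]
where $c$ is the value at $H$ and $\delta_H>0$ by \Cref{gm:lem:commutator}, because $H$ is exceptional and not a chain of $L_d$'s.

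Now suppose both outside parts have more than $T$ vertices; we derive a contradiction for $T$ large. The prefix sum $f_i$ is a sum over at least $T$ components. Using \Cref{thm:stability} and \Cref{lem:pathstable}, the Fiedler vector is close to a scaled path vector, which gives $J\gtrsim\mu\min(T,n)\,n^{-1/2}$. At the same time $J\le\mu n M$, so $J^2=O(\mu^2n^2M^2)=O(\mu M^2)$ after using $\mu n^2\le C_0$. Dividing by $J$ gives $|c|\le C(M\mu n^{2}/T+J/\delta_H)$, so $|c|$ is at most a small multiple of $n^{-1/2}$. This places $H$ near the zero crossing of the Fiedler vector, i.e.\ near the middle.

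The middle case is where I expect the real obstacle, because there the first-order term $\delta_H cJ$ degenerates. My first attempt is to use the second-order term. If $H$ sits at $|c|\ll J\cdot(\text{const})$, then one of the two interchanges moves $H$ toward the side where $-c$ has the same sign as $S_H$. Alternatively, iterating interchanges (each non-improving by hypothesis only for the immediate neighbours, so one may need to apply the lemma's conclusion to the shifted configuration) pushes $c$ to a size where the linear term dominates. If this fails, I would sharpen the estimate by keeping the exact value of $J$ at the middle, $J\asymp\mu n^{1/2}$. Since $c$ changes by about $J\cdot r$ per block while the neighbouring blocks are standard, $|c|\ge$ const$\cdot J$ holds for at least one of the two positions. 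The uniformity in the finite family of $H$'s fixes all constants, giving $T_{d,B,C_0}$ and $N_{d,B,C_0}$.
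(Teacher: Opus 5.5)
Your argument stops exactly where the lemma has content. The case where $H$ sits near the zero crossing is left open, and the remedies you sketch do not work. Iterating interchanges is not available: the hypothesis says only that the two interchanges of $G$ itself do not decrease $\mu$, and nothing is assumed about the interchanged graph. Knowing $|c|\ge\mathrm{const}\cdot J$ at one of the two positions also does not help, because then $\delta_H cJ$ is of the same order as $r_CS_HJ^2$, whose sign is unknown.

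The missing idea is already in your two displayed inequalities, but you draw the wrong conclusion from them. Subtracting the second from the first, the $S_H$ terms cancel and $c_+-c_-=r_CJ+O(\mu M)$, so one gets
\[
 \delta_H r_C J^2\le O\bigl(\mu M^2+\mu MJ\bigr).
\]
This is not a bound on $|c|$; that bound comes from using the inequalities separately. It is the paper's commutator step \eqref{gm:eq:swapsum}: the two changes add up to $-2\mu\delta_Hr_CI^2+O(\mu^2M^2)$, with $\delta_H>0$ by \Cref{gm:lem:commutator}. So at least one interchange improves whenever the error is negligible, without knowing the signs of $c$ or $S_H$.

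For this to give a contradiction, $M$ must be the largest absolute value on $C\cup H\cup C$ and its boundary points, not the global maximum. The remainder in \eqref{gm:eq:commutator} is $O(t^2(a^2+b^2))$ in the local boundary values, and \eqref{gm:eq:harmonicerror} is likewise local. With your global $M=O(n^{-1/2})$, the error $\mu M^2$ is at least of the order of $J^2$ (your own estimate $J^2=O(\mu M^2)$), so the displayed inequality is vacuous and nothing can be concluded near the middle.

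Locally one has $M\le C(|c_H|+I)$. Hence, if $|c_H|\le KI$, the error is $O(\mu^2I^2)$ and the sum is negative for large $n$. If $c_H>KI$, the paper uses only the interchange on the side away from zero. There all values beyond $H$ are at least $c_H/2$, so summing the eigen-equations over the outside part gives $I\gtrsim\mu tc_H$, that is, $\mu M/I\le C/t$. The leading term $-2\mu\delta_Hc_RJ_R$ then beats the $S_H$ term for large $K$ and the $O(\mu^2M^2)$ error for large $t$. This local suffix bound also replaces your global lower bound $J\gtrsim\mu Tn^{-1/2}$. That bound relied on \Cref{thm:stability}, which is not available anyway, since the lemma assumes only $\mu\le C_0n^{-2}$.

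Finally, your reduction step is not quite as in \eqref{gm:eq:replacechange}. There the perturbation $-\Delta e_se_s^\top$ is negative semidefinite. An interchange changes the boundary form by an indefinite $2\times2$ form, so a negative value at the Fiedler restriction does not by itself give a second negative eigenvalue. You need the centering correction of \eqref{eq:strictcomparison}, which the paper bounds by \eqref{gm:eq:centerswap}.
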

\begin{proof}
All subgraphs, their endpoint choices and their orientations now belong to a finite family.
First, an exceptional interval cannot itself be a chain of $L_d$ with the indicated
endpoint vertices, for sufficiently small $\mu$. Indeed, the twin vertices in each middle
clique have equal components, by subtracting their eigen-equations. If the two endpoint
values are both nonnegative or both nonpositive, all interior values have the same weak
sign by bridge extremality. Summing the eigen-equations over successive
groups shows that the sums of differences across the natural interfaces form a monotone
sequence. Since the sums at both boundary bridges are positive, every intermediate
sum is positive. Applying the same summation to cuts between a singleton
and its adjacent middle clique shows that all successive distinct
vertex groups have strictly increasing values. If the endpoint values have opposite strict signs, their largest absolute value
is at most their difference. On this bounded chain, \eqref{gm:eq:harmonicerror} bounds the
difference from the minimizing vector with those endpoints by $C\mu$ times the endpoint
difference. The differences in the minimizing vector between successive distinct groups
are fixed positive multiples of the endpoint difference. They therefore remain positive
for small $\mu$. In both cases the natural bridge cuts are ordered and all intervening
gaps are standard, contrary to the definition of $H$.

Thus \Cref{gm:lem:commutator} gives a positive lower bound for $\delta_H$ on the
finite family of possible exceptional intervals. All quantities implicit below are uniform
in that family. Let $C,H,C$ be the three consecutive subgraphs. Write $a_H,b_H$ for the two
boundary middle-point values of $H$, and put
\[
 c_H=(a_H+b_H)/2,\qquad I=(b_H-a_H)/r_H>0.
\]
Write $M$ for the largest absolute value on the three subgraphs and their boundary points.
Equation~\eqref{gm:eq:harmonicerror} shows that on each bounded subgraph the Fiedler vector
differs by $O(\mu M)$ from the vector minimizing its quadratic form with the same boundary
values. Thus each boundary sum differs by $O(\mu M)$ from the endpoint
difference divided by $r_H$ or $r_C$, as appropriate. At a shared middle point
the two sums agree, because the value there is the average of the
two ends of the original bridge. It follows that the corresponding
parameters for the neighbouring $C$ subgraphs are $I+O(\mu M)$ and that
\begin{align*}
 c_L&=c_H-r_CI/2+O(\mu M),&J_L&=I+O(\mu M),\\
 c_R&=c_H+r_CI/2+O(\mu M),&J_R&=I+O(\mu M),
\end{align*}
where $c_L,J_L$ refer to the union $CH$ and $c_R,J_R$ to $HC$. Also
\[
 M\le C(|c_H|+I)
\]
for sufficiently small $\mu$. One may see this first with an additional $C\mu M$ on the right and absorb that term.

Keep the outside values fixed and optimize after either interchange. The two changes in
the quadratic form minus $\mu$ times squared norm, before centering, have sum
\begin{equation}\label{gm:eq:swapsum}
 -2\mu\delta_Hr_CI^2+O(\mu^2M^2).
\end{equation}
Indeed, apply \eqref{gm:eq:commutator} to $CH\to HC$ and $HC\to CH$. The terms involving
$c_H$ and $S_H$ cancel, leaving the displayed negative term. The changed real components
differ from their old ones by $O(I+\mu M)$, so centering adds at most
\begin{equation}\label{gm:eq:centerswap}
 C\mu(I+\mu M)^2/n
\end{equation}
to either change. The quadratic form of the ordinary graph is no larger than the quadratic form calculated with
the fixed middle-point values.

Choose a sufficiently large constant $K$. If $|c_H|\le KI$, then $M=O_K(I)$, and \eqref{gm:eq:swapsum},
including both centering corrections, is negative for sufficiently large $n$. At least one
interchange strictly improves the Rayleigh quotient.

Suppose $c_H>KI$; the negative case follows by reversal and a sign change. Let $t$ be the
smaller number of vertices outside the three subgraphs. For $K$ sufficiently large, every
component to the right is at least $c_H/2$. By bridge extremality, every
component beyond the right boundary is at least the last component of the three subgraphs.
Summing the eigen-equations over that suffix therefore shows that the difference on the
boundary bridge is at least $\mu t c_H/2$. Its difference from $I$ is $O(\mu M)$, and
$M=O(c_H)$, so
\[
 \frac{\mu M}{I}\le\frac C t
\]
when $t$ exceeds a fixed constant. For the right interchange, \eqref{gm:eq:commutator} has
leading term $-2\mu\delta_Hc_RJ_R$. Choose $K$ large enough to absorb $2\mu
r_C|S_H|J_R^2$, and then $t$ large enough to absorb the error $O(\mu^2M^2)$. The resulting
change is at most $-c\mu c_HI<0$. The centering correction \eqref{gm:eq:centerswap} is
smaller for sufficiently large $n$. This again contradicts the assumed nondecrease.

Thus $t$ is bounded. Adding the bounded sizes of the two neighbouring standard gaps gives the
asserted bound on an outside part of $H$.
\end{proof}

\begin{theorem}\label{gm:thm:boundedends}
In each chosen class, every sufficiently large exact minimizer consists of an
uninterrupted chain of $L_d$ blocks joined by bridges and two connected end subgraphs
containing $O_d(1)$ vertices in total.
\end{theorem}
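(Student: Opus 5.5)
We combine \Cref{gm:thm:boundedintervals} with \Cref{gm:lem:defectmoves}; branching is left to the later argument, so here we only show that no exceptional interval lies deep inside the chain.

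Let $G$ be an exact minimizer of order $n\ge N_d$ in the chosen class, with a nondecreasing Fiedler ordering. By \Cref{lem:comparisonupper}, $\mu(G)\le C_0n^{-2}$ for a constant $C_0$ depending only on $d$. \Cref{gm:lem:bridgeextrema} shows that ordered bridges meet the extremal components on their two sides. \Cref{gm:lem:attachments} gives every interior portion distinct attachments. \Cref{gm:thm:boundedintervals} bounds every exceptional interval by $B_d$ vertices. Thus the hypotheses of \Cref{gm:lem:defectmoves} hold with $B=B_d$ for every interior exceptional interval that has standard gaps on both sides.

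Both interchanges of such an interval $H$ with an adjacent $L_d$ gap are admissible in the chosen class. The resulting graph is again a chain joined by the same bridges, with the same order and the same vertex degrees; in the regular class the deficient attachment vertices still receive exactly one bridge each. Exact minimality therefore forbids a strict decrease of $\mu$. \Cref{gm:lem:defectmoves} then shows that one outside part of $H$ has at most $T=T_{d,B_d,C_0}$ vertices.

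Now take the exceptional intervals in bridge order. Any exceptional interval that is not adjacent to an end and has standard gaps on both sides lies within $T$ vertices of an end. Suppose instead that an exceptional interval has an exceptional neighbour rather than a standard gap. By maximality, the two would merge into a single exceptional interval, so this case does not occur. Consequently every exceptional interval is either an end portion or lies within distance $T$ of one.

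There are at most $T/D+1$ bridge gaps within $T$ vertices of each end, and each exceptional interval has at most $B_d$ vertices. Hence, near each end, at most $O_d(1)$ exceptional intervals occur, containing $O_d(1)$ vertices in total. The union of the first $T+O_d(B_d)$ vertices on each side, up to a bridge, is connected by the observation after \Cref{gm:lem:bridgeextrema}. This union serves as the end subgraph. Everything between the two end subgraphs consists of standard gaps, that is, an uninterrupted $L_d$ chain joined by bridges.

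The main obstacle is checking that the interchange in \Cref{gm:lem:defectmoves} is genuinely admissible in the regular class and preserves connectedness; I would verify this first. A secondary subtlety is that the ordered-bridge structure must be the Fiedler ordering of $G$ itself, which \Cref{thm:stability} supplies.
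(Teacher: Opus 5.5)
Your proposal is correct and follows essentially the same route as the paper. Both combine the bounded exceptional intervals of \Cref{gm:thm:boundedintervals}, the degree-preserving admissibility of both interchanges, the constant $C_0$ from \Cref{lem:comparisonupper}, and bridge extremality with distinct attachments to invoke \Cref{gm:lem:defectmoves}, then extend the first and last $T+B$ positions to the next bridges; the paper additionally enlarges the order threshold so the middle chain has at least two copies, which is routine.
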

\begin{proof}
By \Cref{gm:thm:boundedintervals}, every exceptional interval, including the two
end intervals, has bounded order. Every interior exceptional interval has a standard gap
immediately on both sides. Interchanging the disjoint subgraphs and
reconnecting their specified attachment vertices by bridges preserves connectedness,
simplicity and every degree: each attachment loses one outside
edge and gains one, and all other degrees are unchanged.
Thus both interchanges are admissible even in the regular class.
By \Cref{lem:comparisonupper}, we may fix $C_0=C_0(d)$ such that
$\mu(G)\le C_0n^{-2}$ for every minimizer under consideration.
Exact minimality, together with
Lemmas~\ref{gm:lem:bridgeextrema} and~\ref{gm:lem:attachments}, therefore
allows \Cref{gm:lem:defectmoves} to place each such interval within
a bounded number of vertices of an end. If the two bounds are denoted by $B$ and $T$, every
exceptional
vertex is among the first $T+B$ or the last $T+B$ positions.
Extending these two portions to the next boundary bridges adds at
most $O_d(1)$ vertices. Thus all exceptional vertices lie in two
bounded end subgraphs. Between them every bridge gap is standard, giving the stated chain.
Increasing the order threshold, if necessary, ensures that this middle chain contains at
least two copies.
\end{proof}

\subsection{Cut vertices in the bounded end subgraphs}
We rule out branching at a cut vertex by showing that such a graph
could be rearranged to lower algebraic connectivity.

The end subgraphs given by \Cref{gm:thm:boundedends} are
initial and final segments of the nondecreasing order. By
\Cref{lem:smallentries}, every component on either portion
has the same strict sign. Change the sign of the whole vector
when needed, so that the components on the subgraph under consideration
are positive.
The following rearrangement applies in both classes.

\begin{lemma}\label{gm:lem:graft}
Let $\x$ be a mean-zero unit Fiedler vector of a graph $G$ in the chosen class. Suppose
that $G-v$ has at least three components, and that two of them, $A,B$, have
$x_w>\max\{x_v,0\}$ at every vertex. Then $G$ is not a minimizer.
\end{lemma}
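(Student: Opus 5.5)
The plan is to exhibit a degree-preserving two-edge switch of the kind in \eqref{eq:switch} that does not increase the quadratic form of $\x$ and keeps $G$ connected. We then conclude as in \Cref{gm:lem:bridgeextrema}: either the Rayleigh quotient strictly drops, or it is unchanged while some eigen-equation fails. Either way $\mu(G')<\mu(G)$. Since every degree is preserved, the switch is admissible in both the minimum-degree class and the $d$-regular class, so one argument covers both.

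\emph{Choice of switch.} Relabel $A,B$ so that $\min_A x\le\min_B x$. Take an edge $vb$ with $b\in B$; it exists because $B$ is a component of $G-v$. Take an edge $uz$ inside $A$, with $z$ at or near the minimum of $\x$ on $A$. Replace $vb,uz$ by $ub,vz$. Applying \eqref{eq:switch} with the roles renamed, the change in the quadratic form is
\[
2(x_u-x_v)(x_z-x_b).
\]
The hypothesis gives $x_u>\max\{x_v,0\}\ge x_v$, so the first factor is positive. Since $x_z\le\min_A x\le\min_B x\le x_b$, the second factor is nonpositive. Hence the change is at most $0$. In the equality case the eigen-equation at $b$ changes by $x_v-x_u\ne0$, which rules out equality in the Rayleigh principle. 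The third component of $G-v$ guarantees that $v$ still has a neighbour outside $A\cup B$. So after the switch the rest of the graph stays joined to $v$, and $A\cup B$ hangs off $v$ through $vz$ and $ub$.

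\emph{Main obstacle.} The difficulty is choosing $u,z$ so that the added edges $ub$ and $vz$ are absent and the new graph is connected. The natural choice $z=\arg\min_A x$ fails. Indeed, $x_z>0$ and $\mu x_z=\sum_{j\sim z}(x_z-x_j)$ force $z$ to have a neighbour of smaller value, and on $A\cup\{v\}$ that neighbour can only be $v$, so $vz$ is already an edge.

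I would handle this in the following order. First, let $z_0$ be the minimizer on $A$, so $vz_0\in E$. Use the positivity and degree surplus of $A$, as in the neighbourhood counting of \Cref{gm:lem:bridgeextrema,gm:lem:attachments}, to find a vertex $z\in A$ with $vz\notin E$ and $x_z\le\min_B x$. If that fails, swap the roles of $A$ and $B$. Second, pick $u\in N_A(z)$ with $ub\notin E$ such that $A-uz$ keeps the part containing $u$ joined to $v$ through a remaining edge from $v$ into $A$. Here $z_0$ helps, since $vz_0$ survives, and the part containing $z$ is reattached by $vz$. If every choice makes $uz$ a bridge separating $u$ from all of $N(v)\cap A$, I would instead pick $b$ so that $B$ retains another edge to $v$.

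Finally, check each case to confirm that the switch has nonpositive change and that the result is connected, so minimality is contradicted. I expect this case analysis of non-edges and connectivity to be the bulk of the work; the sign argument itself is immediate from \eqref{eq:switch}.
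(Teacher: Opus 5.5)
Your sign computation is correct, but the step you defer to the case analysis is false in general, so the proposal has a genuine gap.

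\emph{Why the required $z$ need not exist.} Take any $w\in A$ not adjacent to $v$. All its neighbours lie in $A$, and $\sum_{j\sim w}(x_w-x_j)=\mu x_w>0$ forces a neighbour in $A$ of strictly smaller value. Hence $\min_A x$ is attained only on $N(v)\cap A$, and likewise for $B$.

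Now let $v$ have exactly one neighbour $a_1\in A$ and one neighbour $b_1\in B$, with $(A,a_1)\cong(B,b_1)$. For instance, take two copies of $K_D$ hanging from $v$ by bridges in the class $\delta\ge d$, or two isomorphic odd-order pieces with one deficient vertex in the odd-regular class. In the setting of \Cref{gm:lem:cutvertices}, \eqref{gm:eq:outward} makes $\x_A$ and $\x_B$ coincide under the isomorphism, so $x_{a_1}=x_{b_1}$. Every admissible $z$, that is every $z\in A\setminus\{a_1\}$, then has $x_z>x_{a_1}=x_{b_1}$. Your switch therefore changes the quadratic form by $2(x_u-x_v)(x_z-x_{b_1})>0$ for every choice of $u,z$, and exchanging the roles of $A$ and $B$ gives the same.

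No degree-surplus counting can produce the $z$ you need. Yet this is exactly the branching configuration the lemma must exclude. With the Fiedler vector held fixed, values on pendant pieces increase away from $v$, so switches of this type can only cost energy.

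\emph{The missing idea: change the trial vector, not only the graph.} The paper's proof uses essentially your switch, with these specifications:
\begin{itemize}
\item $u$ is the neighbour of $v$ in $A$ of least value;
\item all $q=|N(v)\cap B|$ edges $vb$ are moved to $ub$;
\item when $s=\deg(v)-d<q$, also $c=q-s$ edges $uw$ are moved to $vw$, for the $w\in N_A(u)\setminus N(v)$ of least value.
\end{itemize}
It then raises every component on $B$ by $\eta=x_u-x_v$. The regrafted edges keep their old differences. The moved edges $uw\to vw$ cost $c\eta^2+2\eta\sum_w(x_w-x_u)$. The centered squared norm grows by $2\eta F_B+|B|(1-|B|/n)\eta^2$, where $F_B=\sum_{w\in B}x_w$.

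Summing the eigen-equations over $A$ and over $B$, and labelling so that $F_A/p\le F_B/q$, gives $q\eta\le\mu F_B$. So the norm gain $2\mu\eta F_B\ge 2c\eta^2$ outweighs the cost, using $k=|N_A(u)\setminus N(v)|\ge c+1\ge 2$ and the eigen-equation at $u$. When $s\ge q$, no compensating edges are needed and the quadratic form is unchanged. This norm gain is what your fixed-vector argument lacks.
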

\begin{proof}
Put $F_A=\sum_{w\in A}x_w$, $F_B=\sum_{w\in B}x_w$, $p=|N(v)\cap A|$, and $q=|N(v)\cap
B|$. Relabel the two components so that $F_A/p\le F_B/q$. Choose $u\in N(v)\cap A$ with
the smallest component and put $\eta=x_u-x_v>0$. Summing the eigen-equations on $A$ gives
\begin{equation}\label{gm:eq:graft1}
 p\eta\le\sum_{w\in N(v)\cap A}(x_w-x_v)=\mu F_A,
 \qquad q\eta\le\mu F_B.
\end{equation}
Let $s=\deg(v)-d$. First suppose $s\ge q$. Move every edge from $v$ to $B$ so that its
endpoint $v$ is replaced by $u$. The new graph is simple, connected and has minimum degree
at least $d$. Increase all components on $B$ by $\eta$. Its quadratic form is unchanged, whereas
its centered squared norm increases by
\[
 2\eta F_B+|B|(1-|B|/n)\eta^2>0.
\]
The Rayleigh quotient is therefore smaller.

Suppose instead that $s<q$, and put $c=q-s$. Let
\[
 Z=\{w\in N(u)\cap A:vw\notin E(G)\},\qquad k=|Z|.
\]
There are at most $p-1$ common neighbours of $u,v$ in $A$, so $k\ge d-p$. At least one
edge at $v$ leads to a third component. Thus $d-p\ge c+1$, and $k\ge c+1\ge2$.
By the choice of $u$, every common neighbour in $A$ has component at least $x_u$. The eigen-equation at $u$ gives
\[
 \sum_{w\in Z}(x_w-x_u)\le\eta-\mu x_u.
\]
Take the $c$ smallest components in $Z$, forming $W$. Then
\begin{equation}\label{gm:eq:graft2}
 \sum_{w\in W}(x_w-x_u)\le\frac c k(\eta-\mu x_u).
\end{equation}
Move $v$--$B$ to $u$--$B$, and move $uw$ to $vw$ for all $w\in W$. The degree of $v$
becomes $d$, the degree of $u$ increases by $s$, and all other
degrees are unchanged. In the regular class $s=0$, so the compensated
case applies and every degree is preserved. All added edges are new. The edge $uv$ is retained; any part separated from $u$ by the deleted
edges is reconnected to $v$ by a new edge. Hence the graph remains connected.
Again add $\eta$ to every component on $B$. The moved boundary edges retain their
differences. The change of quadratic form is
\[
 c\eta^2+2\eta\sum_{w\in W}(x_w-x_u),
\]
while the change of variance is $2\eta F_B+|B|(1-|B|/n)\eta^2$. By
\eqref{gm:eq:graft1}--\eqref{gm:eq:graft2}, the change of the quadratic form minus $\mu$ times the centered squared norm is at most
\[
 -c\eta^2\left(1-\frac2k\right)
 -\frac{2c\mu x_u\eta}{k}
 -\mu|B|(1-|B|/n)\eta^2<0.
\]
The new centered vector therefore has smaller Rayleigh quotient. Its variance is nonzero,
since a constant vector has zero quadratic form and cannot make this last expression negative.
\end{proof}

Let $B$ bound the end orders. If $U$ is an outward component of
$G-v$, its principal Laplacian $L_U$, retaining the full degrees,
has smallest eigenvalue at least $B^{-2}$. Indeed, extend a vector
by zero at $v$ and use paths of at most $B$ edges from its vertices
to $v$. Cauchy--Schwarz and summation give
$\|\z\|^2\le B^2\z^\top L_U\z$.
For large $n$, $\mu<B^{-2}$, and the eigen-equations together
with \Cref{lem:positive} give, whenever $x_v>0$,
\begin{equation}\label{gm:eq:outward}
 \x_U-x_v\1=\mu x_v(L_U-\mu I)^{-1}\1>\0.
\end{equation}
Here $L_U\1$ is the vector indicating the neighbours of $v$ in $U$.

\begin{lemma}\label{gm:lem:cutvertices}
No vertex of a sufficiently large minimizer in the chosen class
separates it into three or more components.
\end{lemma}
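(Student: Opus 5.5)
Suppose, for contradiction, that $G$ is a sufficiently large minimizer and that some vertex $v$ has $G-v$ with at least three components. The plan is to produce two components $A,B$ of $G-v$ on which every component of $\x$ exceeds $\max\{x_v,0\}$, after a global sign change if necessary. Then \Cref{gm:lem:graft} contradicts minimality directly.

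First I locate $v$. By \Cref{gm:thm:boundedends}, $G$ is an uninterrupted chain of $L_d$ blocks joined by bridges, together with two bounded end subgraphs. A vertex of the middle chain separates $G$ into at most two components. Indeed, a singleton vertex of an $L_d$ block is incident with one bridge and one block, and a middle-clique vertex is not a cut vertex. The same holds for the attachment vertex of an end subgraph inside the chain, and for the outside endpoint of its bridge. Hence $v$ lies in one of the two bounded end subgraphs, say the final one, and $v$ is not its attachment vertex, or the branching occurs at the attachment vertex itself. Exactly one component of $G-v$ contains the chain and the rest of $G$. The other components, at least two of them, are outward components contained in the bounded end subgraph, each of order at most $B$.

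Next I fix signs. By the remark preceding \Cref{gm:lem:graft}, which uses \Cref{lem:smallentries}, every component of $\x$ on the end subgraph has the same strict sign. I change the sign of $\x$ so that these components are positive; in particular $x_v>0$. For each outward component $U$, $\mu<B^{-2}$ for large $n$, so \eqref{gm:eq:outward} applies and gives $\x_U-x_v\1>\0$. Thus every vertex of $U$ satisfies $x_w>x_v=\max\{x_v,0\}$. Taking $A,B$ to be two outward components, \Cref{gm:lem:graft} applies, since $G-v$ has at least three components, and shows $G$ is not a minimizer.

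The main obstacle is the case in which $v$ is the vertex at the boundary between the end subgraph and the chain. There I must check that the component containing the chain is not one of the two components used. It need not be, since two other components are outward and lie inside the bounded end portion. If $v$ is the outside endpoint of the end bridge, it lies in a standard $L_d$ block and cannot be a $3$-way cut vertex. I also need to verify that \eqref{gm:eq:outward} is valid when $U$ has neighbours of $v$ only through several edges. This is fine, because $L_U\1$ is the nonnegative indicator of the neighbours of $v$ in $U$, and \Cref{lem:positive} applies to $L_U-\mu I$, which is positive definite with nonpositive off-diagonal entries.
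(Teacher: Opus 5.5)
Your proposal is correct and follows the paper's proof essentially step for step. You rule out vertices of the middle $L_d$ chain, fix the sign so that the bounded end subgraph containing $v$ is positive, and apply \eqref{gm:eq:outward} to two outward components to get values above $x_v>0$. You then invoke \Cref{gm:lem:graft}, exactly as the paper does; your extra checks at the chain--end boundary only make explicit what the paper leaves implicit.
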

\begin{proof}
A vertex in the middle $L_d$ chain has at most two components after
deletion. Consider a vertex $v$ in an end subgraph, and choose the sign of $\x$
so that its components on that subgraph are positive.
If $G-v$ has at least three components, choose two, $A,C$, not
containing the middle chain. Their orders are bounded, and
\eqref{gm:eq:outward} makes every component on them greater than
$x_v>0$. \Cref{gm:lem:graft} gives a smaller Rayleigh
quotient in the chosen class, a contradiction.
\end{proof}

\subsection{The block-tree is a path}
We rule out the remaining type of branching and conclude that the
block-tree is a path.

It remains to exclude branching at a block-node with two different
outward cut vertices. The following switches preserve every degree,
so the same argument works in both classes.

\begin{lemma}\label{gm:lem:maxedge}
Let $G$ be a sufficiently large exact minimizer in the chosen class,
and let $A$ be a component of $G-a$ that does not contain the middle
chain. Suppose $A$ lies in a bounded end subgraph on which the
Fiedler components are positive.
Choose $w\in A$ with largest component
on $A$. Then $G-w$ is connected. There is a neighbour $z\in A$ of $w$,
and deleting $wz$ leaves $G[A\cup\{a\}]$ connected.
\end{lemma}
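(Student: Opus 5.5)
The plan is to apply the outward positivity formula \eqref{gm:eq:outward} at the vertex $w$ itself, and then finish with an elementary block-structure argument inside $H:=G[A\cup\{a\}]$.

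First I record where the neighbours of $w$ lie. Since $A$ is a component of $G-a$ and $w\in A$, every neighbour of $w$ lies in $A\cup\{a\}$. Hence $w$ has at least $d-1\ge2$ neighbours in $A$. Also $x_w>0$, because $A$ lies in the bounded end subgraph on which the Fiedler components are positive.

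\emph{Step 1: $G-w$ is connected.} Suppose not.

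\begin{itemize}
\item The middle chain lies outside $A$, and $a$ is joined to it outside $A$. So exactly one component of $G-w$ contains both $a$ and the middle chain.
\item Let $U$ be any other component of $G-w$. A path inside $U$ never meets $a$, so $U$ is contained in a single component of $G-a$. Since $w\in A$ and $U$ contains a neighbour of $w$, that component is $A$, so $U\subseteq A$.
\item Consequently $U$ is an outward component of $G-w$ lying in the bounded end subgraph. Its principal Laplacian satisfies $\lambda_{\min}(L_U)\ge B^{-2}>\mu$ for large $n$.
\item Formula \eqref{gm:eq:outward}, applied with $v=w$ and $x_w>0$ and using \Cref{lem:positive}, gives $\x_U-x_w\1>\0$. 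So some vertex of $U\subseteq A$ has component strictly larger than $x_w$.
\end{itemize}

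This contradicts the choice of $w$ as a maximizer on $A$. Hence $G-w$ is connected.

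\emph{Step 2: the subgraph $H-w$ is connected.} Take $y\in A\setminus\{w\}$. Since $G-w$ is connected, there is a path from $y$ to $a$ in $G-w$. Stop it at its first visit to $a$. Before that visit the path stays in $G-a$, so it stays in the component $A$ of $G-a$. Thus the truncated path lies in $H-w$, and $H-w$ is connected.

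\emph{Step 3: choice of $z$.} Pick any neighbour $z\in A$ of $w$; one exists by the degree count above.

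\begin{itemize}
\item Suppose $wz$ were a bridge of $H$. Then $H-wz$ has two components, one containing $w$ and one containing $z$.
\item The vertex $w$ has another neighbour $z'\ne z$ in $H$, since it has at least $d-1\ge2$ neighbours in $A$.
\item The vertex $z'$ is not in the component of $z$ in $H-wz$. Otherwise $H$ would contain a path from $z$ to $z'$ avoiding $w$, and the edge $wz$ would lie on a cycle.
\item So $z$ and $z'$ are separated in $H-w$, contradicting Step 2.
\end{itemize}

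Hence deleting $wz$ leaves $H=G[A\cup\{a\}]$ connected.

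The only nontrivial ingredient is Step 1, where $w$ must be placed in the role of the vertex $v$ in \eqref{gm:eq:outward}. Two points need checking there. The first is that each non-chain component of $G-w$ really lies inside $A$, which is the containment argument above. The second is that this component lies within the bounded end, so that the eigenvalue bound $\mu<B^{-2}$ is available. Once these hold, the rest is routine graph theory.
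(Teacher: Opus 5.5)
Your proof is correct and follows essentially the same route as the paper. It applies \eqref{gm:eq:outward} at $w$ to a component of $G-w$ avoiding $a$ (which you correctly place inside $A$), then gets connectivity of $G[A\cup\{a\}]-w$ by truncating paths at $a$, and finally uses that $w$ keeps another neighbour after $wz$ is deleted. Your bridge-contradiction in Step~3 is only a slightly longer phrasing of the paper's direct observation.
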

\begin{proof}
If $G-w$ were disconnected, a component $U$ not containing $a$ would
lie in $A\setminus\{w\}$. Its order is bounded, and
\eqref{gm:eq:outward}, with $w$ in place of $v$, would make all its
components greater than $x_w$. This contradicts the choice of $w$.
Thus $G-w$ is connected. It follows also that
$G[A\cup\{a\}]-w$ is connected: a path leaving this subgraph can do
so only at $a$, and the excursion can be deleted.
Only $a$ can be a neighbour of $w$ outside $A$, while $\deg(w)\ge d\ge3$.
Choose a neighbour $z\in A$. After deleting $wz$, the remaining graph
on $(A\cup\{a\})\setminus\{w\}$ is still connected and $w$ still
has another neighbour there. This proves the last assertion.
\end{proof}

\begin{lemma}\label{gm:lem:levelswitch}
Under the hypotheses of \Cref{gm:lem:maxedge}, no edge $uv$, with $x_u<x_v$ and both endpoints outside
$A\cup\{a\}$, satisfies
\[
 x_u\le x_w\le x_v.
\]
\end{lemma}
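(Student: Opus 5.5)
We argue by contradiction, using the two-edge switch \eqref{eq:switch}. Suppose such an edge $uv$ exists, and take $w\in A$ and the neighbour $z\in A$ of $w$ given by \Cref{gm:lem:maxedge}. Replace $uv,wz$ by $uw,zv$. This is the switch with the roles $(u,v,w,z)$ relabelled so that the added edges are $uw$ and $zv$. Every degree is preserved, so the change is admissible in both classes provided the result is simple and connected.

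\emph{Simplicity.} The added edges are new because $u,v\notin A\cup\{a\}$, while $w,z\in A$. The only neighbour of a vertex of $A$ outside $A$ is $a$.

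\emph{Connectivity.} By \Cref{gm:lem:maxedge}, deleting $wz$ leaves $G[A\cup\{a\}]$ connected, and this piece still hangs off $a$. Deleting $uv$ may split the rest of the graph into at most two parts, one containing $u$ and one containing $v$. If it does split, one of the two parts contains $a$, and the other is reattached to $A$ through the new edge $uw$ or $zv$. In every case the new graph is connected.

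\emph{Change in the quadratic form.} The change is
\[
 (x_u-x_w)^2+(x_z-x_v)^2-(x_u-x_v)^2-(x_w-x_z)^2
 =2(x_w-x_v)(x_u-x_z),
\]
by the same expansion as \eqref{eq:switch}. Here $x_w\le x_v$ by hypothesis. Also $x_z\le x_w$ by the maximality of $w$ on $A$, and the hypothesis gives $x_u\le x_w$. So the sign of $x_u-x_z$ is not yet determined, and this is where care is needed.

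To handle it, also consider the alternative switch that replaces $uv,wz$ by $vw,uz$. Its change is
\[
 2(x_w-x_u)(x_z-x_v).
\]
We have $x_w-x_u\ge0$ and $x_z\le x_w\le x_v$, so this change is $\le0$ unconditionally. The simplicity and connectivity checks are identical, with the roles of $u$ and $v$ exchanged. We therefore use this second switch.

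\emph{Strictness.} If the change is strictly negative, the Rayleigh quotient of $\x$ drops while the mean and norm are unchanged. This contradicts exact minimality.

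If the change is zero, then $x_w=x_u$ or $x_z=x_v$. We then invoke the principle stated after \eqref{eq:strictcomparison}: $\x$ would have to remain a Fiedler vector of the new graph, so it suffices to find a vertex where an eigen-equation fails.
\begin{itemize}
\item The eigen-equation at $z$ changes by $(x_z-x_w)-(x_z-x_u)=x_u-x_w$.
\item The eigen-equation at $v$ changes by $(x_v-x_w)-(x_v-x_u)=x_u-x_w$.
\end{itemize}
If $x_u<x_w$, the equation at $z$ fails. If instead $x_u=x_w$, then $x_u<x_v$ forces $x_w<x_v$, so the equation at $w$ changes by $x_v-x_z\ge x_v-x_w>0$ and fails. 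In either case $\x$ is not a Fiedler vector of the new graph, so equality in the Rayleigh principle is impossible, again contradicting minimality.

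The main obstacle is the connectivity check in the degenerate configurations. These occur when $u$ or $v$ equals $a$, or when $uv$ is itself a bridge whose removal separates $a$ from the middle chain. Note that the endpoints of $uv$ lie outside $A\cup\{a\}$, so $a$ is excluded, which removes the first configuration. The second is handled by the two-part argument above.
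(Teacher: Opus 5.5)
Your proof is correct and is essentially the paper's: after discarding your first attempt, you use exactly the paper's switch $uv,wz\to vw,uz$ with change $2(x_w-x_u)(x_z-x_v)\le0$, the same simplicity and connectivity checks through the connected graph $G-A$, and the same equality analysis, via the eigen-equation at $z$ when $x_u<x_w$ and at $w$ when $x_u=x_w$. The only slip is harmless: your discarded switch $uv,wz\to uw,zv$ actually changes the form by $2(x_u-x_z)(x_v-x_w)$, not $2(x_w-x_v)(x_u-x_z)$, although its sign is undetermined in either version.
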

\begin{proof}
Choose $z$ as in that lemma, so $x_z\le x_w$. Replace $wz,uv$ by
$wv,zu$. These edges are absent because no vertex of $A$ has a
neighbour outside $A$ other than possibly $a$. They preserve every
degree.

To verify connectedness, put $R=G-A$, which is connected.
After deleting $wz$, the part $G[A\cup\{a\}]$ is connected. If
$R-uv$ is connected, the whole graph remains connected before adding
new edges. Otherwise $R-uv$ has two components, containing $u$ and
$v$ respectively; the new edges join both of them to the connected
part containing $w,z$. Hence the new graph is connected in either case.

On the unchanged Fiedler vector the change in the quadratic form is
\begin{equation}\label{gm:eq:levelswitch}
 2(x_w-x_u)(x_z-x_v)\le0.
\end{equation}
If it is strict we have a contradiction. If $x_w=x_u$, then
$x_v>x_w\ge x_z$ and the eigen-equation at $w$ changes by $x_z-x_v\ne0$.
The other equality possibility is $x_z=x_v$, which forces
$x_z=x_w=x_v>x_u$; then the equation at $z$ changes by $x_w-x_u\ne0$.
These cover all equality cases, because $x_u<x_v$. Thus equality in
the Rayleigh principle is impossible as well.
\end{proof}

\begin{lemma}\label{gm:lem:blockbranch}
In a bounded end subgraph of a sufficiently large minimizer,
no block has two distinct outward cut vertices.
\end{lemma}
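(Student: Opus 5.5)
Suppose, for contradiction, that a block $X$ in a bounded end subgraph has two distinct cut vertices $a_1\ne a_2$, each of which separates from the middle chain a component of $G-a_i$ not meeting $X$. Call these outward components $A_1,A_2$. Fix the sign of $\x$ so that all components on this end subgraph are positive; this is possible by \Cref{lem:smallentries}. Both $A_1,A_2$ have bounded order and avoid the middle chain, so each satisfies the hypotheses of \Cref{gm:lem:maxedge}. For $i=1,2$, let $w_i\in A_i$ be a vertex with the largest component on $A_i$. By \eqref{gm:eq:outward}, applied at $a_i$, we get $x_{w_i}>x_{a_i}>0$. Relabel so that $x_{w_1}\le x_{w_2}$.

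The aim is an edge $uv$ that contradicts \Cref{gm:lem:levelswitch} for the pair $(A_1,a_1,w_1)$. That is, we want $uv$ with both endpoints outside $A_1\cup\{a_1\}$ and $x_u\le x_{w_1}\le x_v$, $x_u<x_v$. The natural choice is a path that climbs from low values to the level $x_{w_2}\ge x_{w_1}$ while staying outside $A_1\cup\{a_1\}$. The component of $G-a_1$ containing the middle chain contains $X$, $a_2$ and $A_2$, since $a_2\ne a_1$ and $A_2$ is attached through $a_2\in X$. It also contains vertices with arbitrarily small, in fact negative, Fiedler components; the middle chain contains at least $c_dn$ negative entries by \Cref{lem:smallentries}. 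So choose a path inside this component, avoiding $a_1$, from a vertex with negative component to $w_2$. Along it the values start below $x_{w_1}$ and end at $x_{w_2}\ge x_{w_1}$, so some edge $uv$ on it has $x_u<x_{w_1}\le x_v$. If $x_{w_2}=x_{w_1}$ and the crossing happens only at the last step, we still get $x_u<x_v$ with $x_u\le x_{w_1}\le x_v$. In every case the lemma's hypothesis holds, and the path avoids $A_1\cup\{a_1\}$ by construction.

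This contradicts \Cref{gm:lem:levelswitch}. The paper's degree-preserving switch of $wz,uv$ for $wv,zu$ works in both classes, so no class distinction is needed.

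The main obstacle is checking that \Cref{gm:lem:maxedge} and \Cref{gm:lem:levelswitch} genuinely apply. First, the switch must not reconnect things illegally. The edge $zu$ requires $u\notin N(z)$, which holds since $z\in A_1$ and $u\notin A_1\cup\{a_1\}$. Second, if $u=a_2$ or $v$ lies in $A_2$, no conflict arises, because the lemma only forbids endpoints in $A_1\cup\{a_1\}$.

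A subtler point concerns the case where $a_2$ separates $X$ from the middle chain rather than separating an outward piece. In that case $a_2$ is the inward cut vertex of $X$, and "two distinct outward cut vertices" means both $a_1$ and $a_2$ have outward components. So I will first interpret the statement precisely: among the cut vertices of $X$, at most one leads toward the chain, and we assume two others lead away. Both outward components then lie in the bounded end subgraph with positive components, as \Cref{gm:lem:maxedge} requires. This bookkeeping, rather than the switch itself, is the part to write carefully.
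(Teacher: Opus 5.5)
Your argument is correct, but after the common setup it takes a different route from the paper. The shared setup is: choose the sign so the end subgraph is positive, and let $w_1$ maximize $\x$ on the outward component with the smaller maximum.

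The paper stays inside the end subgraph and proceeds in three steps:
\begin{enumerate}
\item It applies \Cref{gm:lem:levelswitch} to a path from $b$ into $C$, showing $x_b>M_A$.
\item It applies the same lemma to paths in $B-a$, showing that every vertex of $B-a$ exceeds $M_A>x_a$.
\item It then contradicts the eigen-equation $\sum_{v\sim a}(x_a-x_v)=\mu x_a>0$. This step needs \Cref{gm:lem:cutvertices}, to know that every neighbour of $a$ outside $A$ lies in $B$.
\end{enumerate}

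You instead obtain the forbidden level-crossing edge in one step. You take a path, inside the chain-side component of $G-a_1$, from a negative entry to $w_2$. This is legitimate for two reasons. First, \Cref{gm:lem:levelswitch} applies to every edge with both endpoints outside $A_1\cup\{a_1\}$, including edges of the middle chain, because its switch and connectivity check are global. Second, a negative entry exists outside the positive end subgraph, since $\x$ has mean zero.

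What each route buys:
\begin{itemize}
\item Your route is shorter and needs neither the eigen-equation at $a$ nor \Cref{gm:lem:cutvertices}. It also proves something stronger: $\max_{A}x$ exceeds every value reachable from the chain without meeting $A\cup\{a\}$.
\item The paper's route uses only positivity on the end subgraph and local paths.
\end{itemize}

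Your path argument uses $a_1\ne a_2$ essentially. When both outward components hang off a single cut vertex, every path from the chain to $w_2$ passes through that vertex. This is why that case is handled separately by \Cref{gm:lem:graft}.

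One small correction: the chain-side component of $G-a_1$ contains $X-a_1$, not $X$.
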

\begin{proof}
By changing the sign of the Fiedler vector if necessary, assume that
all its components on the end subgraph are positive. Root the
block-tree of this subgraph at its attachment to the middle chain. Suppose a block $B$ has outward cut vertices $a,b$, with the
outward components $A,C$, respectively. By
\Cref{gm:lem:cutvertices}, these are the only components beyond
those vertices away from $B$. They are bounded, and
\eqref{gm:eq:outward} gives
\[
 \x_A>x_a\1,\qquad \x_C>x_b\1.
\]
Write $M_A=\max_Ax$, $M_C=\max_Cx$, relabel so that $M_A\le M_C$,
and choose $w\in A$ with $x_w=M_A$.

If $x_b\le M_A$, a path from $b$ to a vertex of $C$ with value $M_C$
contains a nonconstant edge with $M_A$ between its endpoint values.
This includes the equality case $x_b=M_A$, since then $M_C>x_b$.
Both endpoints of that edge lie outside $A\cup\{a\}$, contradicting
\Cref{gm:lem:levelswitch}. Therefore $x_b>M_A$.

The graph $B-a$ is connected. If it contained a vertex of value at
most $M_A$, a path there to $b$ would give the same contradiction.
Thus every vertex of $B-a$ has value greater than $M_A>x_a$.
Every neighbour of $a$ outside $A$ is in $B$, since $a$ belongs to
exactly two blocks by \Cref{gm:lem:cutvertices}. All neighbours
of $a$ now have component greater than $x_a>0$, contradicting
\[
 \sum_{v\sim a}(x_a-x_v)=\mu x_a>0.
\]
This excludes the proposed branching.
\end{proof}

\begin{proof}[Proof of \Cref{thm:GMfull}]
\Cref{gm:thm:boundedends} leaves an uninterrupted $L_d$ chain
and two bounded end subgraphs. \Cref{gm:lem:cutvertices}
excludes branching at cut-vertex nodes of the block-tree.
A branching block-node in an end subgraph would have at least two
outward cut vertices and is excluded by
\Cref{gm:lem:blockbranch}. The middle blocks and their bridges
already form a path and have no other exits. Thus the whole
block-tree is a path, including both ends. All blocks other than
the middle $L_d$ blocks and their bridges have $O_d(1)$ vertices
in total, at the two ends. The initial reduction covers every
minimizer in the class $\delta=d$ as well as $\delta\ge d$.

If the middle chain has $k$ copies, then $kD=n-O_d(1)$, and every
path across it has at least $3k-1$ edges.
The diameter bound in \cite{Caccetta,Erdos} and
\cite[Theorems~5.1--5.2]{AbGhDiam} supplies the matching upper bound.
Consequently $\dm(G)=3n/(d+1)+O_d(1)$. This proves all assertions
of the theorem.
\end{proof}

\section{The minimum algebraic connectivity}\label{sec:chainvalue}
In this section, we deduce the numerical formula from the structure of
exact minimizers. We use the chain-to-path comparison from
\cite[Section~3.2]{AbGhDiam}, with the error estimates written explicitly
so that the remainder is $O_d(n^{-3})$, rather than only $o(n^{-2})$.
Put $D=d+1$ and $\alpha=d-1$.

\subsection{The algebraic connectivity of a chain}
We take one Fiedler component from each $L_d$ block and compare the
resulting vector with a vector on a path. The bounded end subgraphs
contribute only a small error to its norm and mean.

\begin{lemma}\label{lem:chainvalue}
Fix $d\ge3$ and an integer $B\ge2$. Let $G$ be obtained from a chain
of $m$ disjoint copies of $L_d$ by joining a connected end subgraph
to each outer singleton by one bridge. Suppose that the two end
subgraphs are disjoint from the chain and have at most $B$ vertices
altogether, and that there are no other edges between these subgraphs
and the chain. If $n=|V(G)|$, then, as $m\to\infty$,
\begin{equation}\label{eq:chainvalue}
 \mu(G)=\frac{(d-1)\pi^2}{n^2}+O_{d,B}(n^{-3}).
\end{equation}
The error is uniform over the choices of the two end subgraphs.
\end{lemma}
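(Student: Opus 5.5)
The upper bound is the same trial-vector computation as in \Cref{lem:comparisonupper}, which never used the specific end graphs. Give the $j$th copy of $L_d$ the path values $x_j$ at its left singleton, $x_j+\Delta_j/D$ on its middle clique, and $x_j+2\Delta_j/D$ at its right singleton. Extend the vector constantly on the two end subgraphs. Then the quadratic form is $\alpha p_m/D$. The centered norm is $D-O_{d,B}(m^{-2})$, because the end subgraphs add nonnegative mass and shift the mean by only $O_B(m^{-1/2})$. This gives $\mu(G)\le(d-1)\pi^2/n^2+O_{d,B}(n^{-3})$, using $n=Dm+O(B)$. Everything depends on the end subgraphs only through $B$.

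For the lower bound I will work directly with a unit mean-zero Fiedler vector $\x$ and reduce to a weighted path on the $m$ blocks. Since $\mu=O(n^{-2})$, \Cref{lem:smallentries} gives $\max|x_v|=O(n^{-1/2})$.

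\emph{Step 1 (internal structure).} In each copy of $L_d$, the $d-1$ middle vertices have equal components, by subtracting their eigen-equations. Let $\ell_j$ and $r_j$ be the singleton values and $c_j$ the middle value. The eigen-equations at the singletons and at the middle show that $c_j-\ell_j$ and $r_j-c_j$ agree with $\beta_j/\alpha$ up to $O(\mu|x|)=O(n^{-5/2})$. Here $\beta_j$ is the bridge difference, which by \eqref{eq:prefixsum} equals $\mu$ times a prefix sum. Consequently the energy of block $j$ plus its outgoing bridge equals $\tfrac{D}{\alpha}\beta_j^2$ up to lower-order terms. More cleanly, by \eqref{eq:clique-averaging} and \eqref{eq:weighted-squares}, this energy is at least $\tfrac{\alpha}{D}(\ell_{j+1}-\ell_j)^2$ exactly, which is the series-resistance bound $R=D/\alpha$ per period.

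\emph{Step 2 (reduce to a path).} Put $X_j=\ell_j$. Then $\mu\ge\frac{\alpha}{D}\sum_{j<m}(X_{j+1}-X_j)^2$, dropping the end-subgraph energy. For the norm, every vertex of block $j$ has value $X_j+O(|X_{j+1}-X_j|)$. Hence $1=\|\x\|^2=D\sum_jX_j^2+O(\sum_j|X_j||X_{j+1}-X_j|)+O(\sum_j(X_{j+1}-X_j)^2)+O(B/n)$, where the last term is the end contribution bounded via $\max|x_v|^2=O(1/n)$.

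The $O(B/n)$ term is too crude, and this is the main obstacle. I would sharpen it in two ways. First, the middle terms telescope: $2\sum_j X_j(X_{j+1}-X_j)=X_m^2-X_1^2-\sum_j(X_{j+1}-X_j)^2$, as in the proof of \Cref{lem:comparisonupper}. Second, by \eqref{gm:eq:harmonicerror} the end subgraphs are nearly constant, equal to $X_1$ or $X_m$ up to $O(\mu n^{-1/2})$, and $X_1^2,X_m^2=O(n^{-1})$. The mean-zero condition likewise gives $D\sum_jX_j=O(n^{-1/2})$ once the end corrections are included. With both refinements, $\|\X-\bar X\1\|^2=D^{-1}(1+O(n^{-1}))$ with an explicit $O_B(n^{-1})$ error. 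Then \eqref{eq:pathineq} gives $\mu\ge\frac{\alpha}{D}p_m\cdot\frac1D(1-O(n^{-1}))$. Since $p_m=\pi^2/m^2+O(m^{-4})$ and $Dm=n+O(B)$, this is $(d-1)\pi^2/n^2-O(n^{-3})$.

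The point needing most care is that a relative error of $O(1/n)$ in the norm, rather than $o(1)$, really suffices for an $O(n^{-3})$ remainder. The end masses are of order $B/n$ in relative size. They must be handled so that their effect on both the norm and the mean contributes only $O(n^{-1})\cdot\mu$, and it does, since $\mu=O(n^{-2})$.
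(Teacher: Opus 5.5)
Your proposal is correct and is essentially the paper's proof. Both use the same trial vector for the upper bound. For the lower bound, both take the left-singleton values $u_j=x_{s_j}$, apply the three-cell comparison $\mu\ge\frac{\alpha}{D}\sum_{j<m}(u_{j+1}-u_j)^2$, and finish with the path inequality, allowing a relative $O_{d,B}(n^{-1})$ error in the centered norm. The paper bounds the deviation inside each block by the general edge estimate $|x_u-x_v|\le\mu\sqrt n$, noting that every vertex of a copy is within distance two of $s_j$, rather than by the eigen-equations of $L_d$.

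The $O(B/n)$ end term is not an obstacle, as your final paragraph concedes, so the telescoping and harmonic-error refinements can be dropped. They would not improve the order anyway, since $X_m^2-X_1^2$ is itself only known to be $O(n^{-1})$.
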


\begin{proof}
Write the copies of $L_d$ in their chain order as $C_1,\ldots,C_m$.
In $C_j$, let $s_j$ and $t_j$ be its left and right singleton vertices,
and let $W_j$ be its set of $\alpha$ middle vertices. Thus the bridge
between consecutive copies is $t_js_{j+1}$. If the end subgraphs
have orders $b_-$ and $b_+$, then
\[
 n=Dm+b_-+b_+.
\]

The trial vector in the proof of \Cref{lem:comparisonupper} applies
without change. Indeed, that calculation uses only the $L_d$ chain,
the two attachment bridges, and the bounds on $b_-$ and $b_+$; all
values on an end subgraph are equal, so its internal edges contribute
zero. It gives
\begin{equation}\label{eq:chainupper}
 \mu(G)\le\frac{\alpha\pi^2}{D^2m^2}+O_{d,B}(m^{-4})
          =\frac{\alpha\pi^2}{n^2}+O_{d,B}(n^{-3}).
\end{equation}
In particular, $\mu=\mu(G)=O_{d,B}(n^{-2})$.

Let $\x$ be a mean-zero unit Fiedler vector of $G$. By
\Cref{lem:smallentries},
\begin{equation}\label{eq:chainsize}
 M:=\max_v|x_v|=O_{d,B}(n^{-1/2}).
\end{equation}
We also need a bound on the difference along an edge. Suppose that
$uv\in E(G)$ and $x_u<x_v$, and put
$S=\{w:x_w\le x_u\}$. Summing the eigen-equations over $S$ gives
\[
 -\mu\sum_{w\in S}x_w
 =\sum_{\substack{ab\in E(G)\\a\in S,\ b\notin S}}(x_b-x_a).
\]
Every term on the right is positive, and one of them is $x_v-x_u$.
Cauchy--Schwarz therefore gives
\begin{equation}\label{eq:chainedge}
 |x_v-x_u|\le\mu\sqrt n=O_{d,B}(n^{-3/2})
 \qquad(uv\in E(G)).
\end{equation}
The same bound is immediate when $x_u=x_v$. This is the edge
estimate used in \cite[Section~2]{AbGhDiam}, with its dependence
on $n$ retained.

Put $u_j=x_{s_j}$ and let $\bu=(u_1,\ldots,u_m)^\top$.
Every vertex of $C_j$ is at distance at most two from $s_j$ within
that copy. Hence \eqref{eq:chainedge} gives
\begin{equation}\label{eq:chaincellerror}
 |x_v-u_j|=O_{d,B}(n^{-3/2})
 \qquad(v\in V(C_j)).
\end{equation}
Using \eqref{eq:chainsize}, we obtain, for every $j$,
\begin{align*}
 \left|\sum_{v\in V(C_j)}x_v-Du_j\right|
   &\le\sum_{v\in V(C_j)}|x_v-u_j|
     =O_{d,B}(n^{-3/2}),\\
 \left|\sum_{v\in V(C_j)}x_v^2-Du_j^2\right|
   &\le2M\sum_{v\in V(C_j)}|x_v-u_j|
     =O_{d,B}(n^{-2}).
\end{align*}
The two end subgraphs have at most $B$ vertices. By
\eqref{eq:chainsize}, their total sum is $O_{d,B}(n^{-1/2})$
and their total squared mass is $O_{d,B}(n^{-1})$. Since $\x$
has mean zero and norm one, summing over the $m=O(n)$ copies gives
\begin{align}
 D\sum_{j=1}^m u_j&=O_{d,B}(n^{-1/2}),
     \label{eq:chainmean}\\
 D\sum_{j=1}^m u_j^2&=1+O_{d,B}(n^{-1}).
     \label{eq:chainmass}
\end{align}
If $\bar u=m^{-1}\sum_j u_j$, then \eqref{eq:chainmean} implies
$m\bar u^2=O_{d,B}(n^{-2})$. Consequently,
\begin{equation}\label{eq:chaincenteredmass}
 \|\bu-\bar u\1\|^2
 =\frac1D+O_{d,B}(n^{-1}).
\end{equation}

We next compare the quadratic forms. Let $v_j$ be the average on
$W_j$, and put $w_j=x_{t_j}$. For $j<m$, Cauchy--Schwarz gives
\begin{align*}
 &\sum_{v\in W_j}(x_v-u_j)^2
  +\sum_{v\in W_j}(w_j-x_v)^2
  +(u_{j+1}-w_j)^2\\
 &\qquad\ge
 \alpha(v_j-u_j)^2+\alpha(w_j-v_j)^2+(u_{j+1}-w_j)^2\\
 &\qquad\ge
 \frac{(u_{j+1}-u_j)^2}{\alpha^{-1}+\alpha^{-1}+1}
 =\frac{\alpha}{D}(u_{j+1}-u_j)^2.
\end{align*}
This is precisely the three-cell comparison for the sizes
$(1,d-1,1)$ in \cite[Section~3.2]{AbGhDiam}. No monotonicity
assumption on $u_1,\ldots,u_m$ is needed here. The edge sets used
for different $j<m$ are disjoint, and all omitted terms are
nonnegative. Therefore
\begin{equation}\label{eq:chainpathenergy}
 \mu=\x^\top L(G)\x
 \ge\frac{\alpha}{D}\sum_{j=1}^{m-1}(u_{j+1}-u_j)^2.
\end{equation}
Apply \eqref{eq:pathineq} to $\bu$ and use
\eqref{eq:chaincenteredmass}. Since
$p_m=\pi^2/m^2+O(m^{-4})$, we obtain
\begin{align*}
 \mu
 &\ge\frac{\alpha}{D}p_m\|\bu-\bar u\1\|^2\\
 &\ge\frac{\alpha\pi^2}{D^2m^2}-O_{d,B}(n^{-3})\\
 &=\frac{\alpha\pi^2}{n^2}-O_{d,B}(n^{-3}).
\end{align*}
Together with \eqref{eq:chainupper}, this proves
\eqref{eq:chainvalue}. All estimates depend only on $d$ and $B$,
not on the particular end subgraphs.
\end{proof}

\begin{proof}[Proof of \Cref{thm:minimum}]
Choose a graph attaining $m_d(n)$ in the indicated class.
\Cref{thm:GMfull} shows that, for all sufficiently large $n$,
it is an uninterrupted $L_d$ chain outside two end portions of
total order at most $B_d$. If necessary,
absorb the first and last $L_d$ copies into the corresponding end
portions. This increases their total order by at most $2D$ and
leaves a chain attached to two disjoint connected end subgraphs
by one bridge each.

Thus \Cref{lem:chainvalue}, with $B$ depending only on $d$, gives
\[
 m_d(n)=\frac{(d-1)\pi^2}{n^2}+O_d(n^{-3}).
\]
In the regular class the orders are even, as required. This proves
\eqref{eq:sharpminimum} as a consequence of the structural theorem.
\end{proof}

\section{Algebraic connectivity and maximum diameter}\label{sec:diameter}

In this section, we prove \Cref{cor:diameter}, establishing the
implication in \cite[Conjecture~5.4]{AbGhDiam} for the degree classes
stated in that theorem. We combine the stability theorem with the
classical diameter upper bound in \cite{Caccetta,Erdos} and
\cite[Theorems~5.1--5.2]{AbGhDiam}.

\begin{proof}[Proof of \Cref{cor:diameter}]
Equation~\eqref{eq:sharpminimum} gives
$n^2m_d(n)\to(d-1)\pi^2$. Hence $\mu(G_n)/m_d(n)\to1$ implies
$n^2\mu(G_n)\to(d-1)\pi^2$. Put
\[
 \e_n=\max\{0,n^2\mu(G_n)-(d-1)\pi^2\}.
\]
For all large $n$, $0\le\e_n\le1$ and $\e_n\to0$.
\Cref{thm:stability} gives
\[
 \dm(G_n)\ge\frac{3n}{d+1}
       -C_d(\e_n+n^{-1})^{1/3}n
       =\frac{3n}{d+1}-o(n).
\]
On the other hand, \cite{Caccetta,Erdos} and
\cite[Theorems~5.1--5.2]{AbGhDiam} give
$\dm(G_n)\le3n/(d+1)-1$. These two bounds prove
\eqref{eq:diameterimplication}.
For exact minimizers, \Cref{thm:GMfull} leaves $(n-O_d(1))/(d+1)$ uninterrupted
middle blocks in their respective classes. Their contribution to a
shortest path gives $\dm(G)\ge3n/(d+1)-O_d(1)$. Combine this with
the same cited diameter bound to obtain \eqref{eq:exactdiameter}.
\end{proof}

\section{Uniform minimum-degree bounds}
\label{sec:aldousfill}
In this section, we prove \Cref{thm:uniformminimum} by treating
sublinear minimum degree and minimum degree comparable to the order.
The Aldous--Fill bound then follows as a corollary.

\subsection{A finite-order bound}
We first deal with degrees smaller than the order. Extending the
comparison path by constant values at its two ends avoids having
to estimate the change in the norm caused by interpolation.

\begin{lemma}\label{lem:lowerbound}
Every connected  graph $G$ of order $n$ and minimum degree
at least $d\ge3$ satisfies
\begin{equation}\label{eq:simplelower}
 \mu(G)\ge(d-1)p_{n+2d},
\end{equation}
where $p_m=2(1-\cos(\pi/m))$. The integer $d$ need not be fixed
as $n$ varies.
\end{lemma}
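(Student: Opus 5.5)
We follow the hint in the section introduction: we compare with a path of length $n+2d$, obtained by padding the ordered vector with $d$ repeated endpoint values at each end. This way no interpolation error in the norm has to be estimated. Let $\x$ be a mean-zero unit Fiedler vector, ordered increasingly, and let $\y$ be its interpolation from \Cref{lem:pathcomparison}. By \eqref{eq:pathcomparison},
\[
\mu=\x^\top L(G)\x\ge(d-1)\sum_{i=1}^{n-1}b_i^2 .
\]
That lemma holds for every $d\ge3$, with no fixedness used. The remaining task is a lower bound for the path energy of $\y$ in terms of the norm of $\x$.

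Define the vector $\z$ of length $n+2d$ by
\[
z=(x_1,\dots,x_1,\;y_1,\dots,y_n,\;x_n,\dots,x_n),
\]
with $d$ copies of $x_1$ at the front and $d$ copies of $x_n$ at the end. Since $y_1=x_1$ and $y_n=x_n$, we have $\z^\top L(P_{n+2d})\z=\sum_i b_i^2$. Hence \eqref{eq:pathineq} gives
\[
\sum_i b_i^2\ge p_{n+2d}\|\z-\bar z\1\|^2 .
\]

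The key step is to show $\|\z-\bar z\1\|^2\ge1=\|\x\|^2$. By \eqref{eq:centering}, this centered norm is the minimum over constants $c$ of $\sum(z_k-c)^2$. So we must show that, for every $c$,
\[
\sum_k(z_k-c)^2\ge\sum_v(x_v-c)^2\ge\sum_v x_v^2=1,
\]
where the last inequality uses $\sum_v x_v=0$.

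The components of $\y$ differ from those of $\x$ only inside the selected intervals $B_a$ of \eqref{eq:windows}, each of length at most $D$. Inside such an interval, $\y$ linearly interpolates between $x_a$ and $x_{a+r}$. Here is the plan for this comparison.
\begin{itemize}
\item Interpolation can lose squared mass relative to $\x$, at most of order $(r-1)\Delta_B^2$ per interval.
\item The $2d$ padded entries $x_1,x_n$ are the extreme values, so $(x_1-c)^2$ and $(x_n-c)^2$ dominate $(x_v-c)^2$ for every $v$ on the corresponding side of $c$.
\item We will try to charge each interval's deficit to padding mass. Within an interval, $\sum_j(y_{a+j}-c)^2$ versus $\sum_j(x_{a+j}-c)^2$ is a comparison of a convex function evaluated at a "spread-out" (linear) sequence versus the monotone sequence $x$ with the same endpoints.
\end{itemize}

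I expect this norm comparison to be the main obstacle, because the interpolation deficits must be bounded uniformly in $d$. A cleaner route may be to redefine the padding: instead of only using $\y$, compare directly with the sorted multiset $\{x_v\}\cup\{x_1^{(d)}\}\cup\{x_n^{(d)}\}$. One could then argue that a majorization-type inequality holds. The sum of deficits is at most $\sum_B(r-1)\Delta_B^2$, and this should be absorbed by the extra extreme entries. If this fails, I would fall back on accepting a factor $\|\z-\bar z\1\|^2\ge1-O(d\mu)$ via \eqref{eq:vectorerror} and redo the bound. That, however, would not give the exact inequality \eqref{eq:simplelower}.

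Assuming the norm inequality, combining the displays yields
\[
\mu\ge(d-1)p_{n+2d}\|\z-\bar z\1\|^2\ge(d-1)p_{n+2d},
\]
which is \eqref{eq:simplelower}.
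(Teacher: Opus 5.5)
\paragraph{Verdict: genuine gap at the key step.}
Your setup is exactly the paper's: the same interpolation $\y$, the same padded vector $\z$ of length $n+2d$, the bound $\mu\ge(d-1)\z^\top L(P_{n+2d})\z\ge(d-1)p_{n+2d}\|\z-c\1\|^2$ with $c=\bar z$, and the reduction to showing $\|\z-c\1\|^2\ge\sum_i(x_i-c)^2=1+nc^2$. But that norm comparison is the whole content of the lemma, and you only assume it, so the proof is missing its one nontrivial step.

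\paragraph{Why the charging plan fails as stated.}
The plan rests on a false estimate. The bound $\|\x-\y\|^2\le(r-1)\Delta_B^2$ per interval does not control the loss
$\sum_j\bigl[(x_{a+j}-c)^2-(y_{a+j}-c)^2\bigr]=\sum_j(x_{a+j}-y_{a+j})(x_{a+j}+y_{a+j}-2c)$.
This loss is first order in $\Delta_B$, multiplied by the distance from $c$ to the interval. For example, take the interior values all equal to $x_{a+r}$ and $c$ well below $x_a$; then the loss is positive and of size about $(r-1)\Delta_B(x_a-c)$, not $O(\Delta_B^2)$. So a comparison inside a single interval cannot work, and it is not clear that the charging scheme closes when summed over up to $n/d$ intervals.

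\paragraph{The missing idea: an index shift.}
Every selected interval $B_a=\{a,\dots,a+r-1\}$ has $r\le d+1$ differences. Hence an interior position $a<i<a+r$ satisfies $i-d\le a$ and $i+d\ge a+r$. Three facts then combine:
\begin{enumerate}
\item $\z$ is nondecreasing;
\item $z_a=x_a$ and $z_{a+r}=x_{a+r}$;
\item $z_i=x_i$ at every position not interior to an interval.
\end{enumerate}
Together they give $z_{i-d}\le x_i\le z_{i+d}$ for all $1\le i\le n$. The $d$ padded copies at each end guarantee that the indices $i\pm d$ always exist.

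Now let $k$ be the number of entries $x_i\le c$. Map $i\mapsto i-d$ for $i\le k$ and $i\mapsto i+d$ for $i>k$. The images lie in $[1-d,k-d]$ and $[k+1+d,n+d]$ respectively, so they are distinct. By the bracketing, each $z$-value at an image index is at least as far from $c$ as $x_i$. Therefore
$\|\z-c\1\|^2\ge\sum_i(x_i-c)^2=1+nc^2\ge1$,
with no deficit estimate needed. This is exactly the role of the padding in the paper's proof.
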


\begin{proof}
Let $\x$ be a nondecreasing mean-zero unit Fiedler vector, and let
$\y$ be its interpolation from \Cref{lem:pathcomparison}.
Put $N=n+2d$. Extend $\y$ to $\z=(z_{1-d},\ldots,z_{n+d})^\top$
by adjoining $d$ copies of $x_1=y_1$ at the left and $d$ copies
of $x_n=y_n$ at the right, with $z_i=y_i$ for $1\le i\le n$.
The path energy is unchanged. Each interpolation interval has at
most $d+1$ differences. For each changed entry $x_i$, its unchanged
endpoints therefore lie between indices $i-d$ and $i+d$.
At every other index $z_i=x_i$, so monotonicity gives
\begin{equation}\label{eq:paddedbracket}
 z_{i-d}\le x_i\le z_{i+d}\qquad(1\le i\le n).
\end{equation}
Put $c=\bar z$, and let $k$ count the entries $x_i\le c$.
Compare $x_i$ with $z_{i-d}$ for $i\le k$ and with $z_{i+d}$
for $i>k$. The chosen indices are distinct, and each chosen
value is at least as far from $c$ as $x_i$. Hence
\begin{align*}
 \|\z-c\1\|^2
 &\ge\sum_{i=1}^{k}(z_{i-d}-c)^2
       +\sum_{i=k+1}^{n}(z_{i+d}-c)^2\\
 &\ge\sum_{i=1}^{n}(x_i-c)^2=1+nc^2\ge1.
\end{align*}
Since adjoining the endpoint copies adds no energy,
\eqref{eq:pathcomparison} and the path inequality give
\[
 \mu(G)\ge(d-1)\z^\top L(P_N)\z
 \ge(d-1)p_N\|\z-c\1\|^2\ge(d-1)p_N.
\]
\end{proof}

For fixed $d$, the right-hand side of \eqref{eq:simplelower} is
$(d-1)\pi^2/n^2+O_d(n^{-3})$. For the uniform argument, we use
the exact bound rather than this fixed-degree expansion.

\subsection{A weighted variance inequality}
The following inequality will be applied to a small graph obtained by
replacing each large part by one vertex. The weight of that vertex is
the proportion of original vertices in the part.

\begin{lemma}\label{lem:weightedvariance}
Let $Q$ be a connected  graph with $k\ge2$ vertices.
Give its vertices positive weights $w_1,\ldots,w_k$.
Put $W=\sum_iw_i$, $m=\min_iw_i$, and
$\bar z=W^{-1}\sum_iw_i z_i$. Then every real vector $\z=(z_1,\ldots,z_k)^\top$ satisfies
\begin{equation}\label{eq:weightedvariance}
 \sum_{i=1}^k w_i(z_i-\bar z)^2
 \le \frac{W^2}{8m}
       \sum_{ij\in E(Q)}(z_i-z_j)^2.
\end{equation}
The constant $8$ cannot be increased.
\end{lemma}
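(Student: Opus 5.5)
We reduce to a spanning tree and then prove the inequality for trees by splitting along a single edge. Deleting edges of $Q$ only decreases the right side of \eqref{eq:weightedvariance} and leaves the left side unchanged. So we may assume $Q$ is a tree $T$.

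For a tree, the centered vector decomposes along edges. Write $x_i=z_i-\bar z$. For each edge $e=ij$ of $T$, let $S_e$ be the vertex set of the component of $T-e$ containing $i$. Put $s_e=W^{-1}\sum_{l\in S_e}w_l$, so that $1-s_e$ is the weight fraction on the other side. Then
\[
 \sum_l w_lx_l^2=\frac1W\sum_{l<l'}w_lw_{l'}(z_l-z_{l'})^2 .
\]
Fix a root $\rho$. Each $z_l-z_\rho$ is the sum of the edge differences $g_e$ along the root path. Substituting and exchanging sums, the variance equals a quadratic form $\sum_{e,f}M_{ef}g_eg_f$, where $M_{ef}$ equals $W$ times the product of the weight fractions separated by both $e$ and $f$, with a sign and overlap correction.

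A cleaner route uses the weighted prefix-sum identity \eqref{eq:AFprefix} in flow form. There is a unique flow on $T$ whose net out-flow at $l$ is $w_lx_l$. Its value on $e$ is $\phi_e=\sum_{l\in S_e}w_lx_l$. Summation by parts gives
\[
 \sum_l w_lx_l^2=\sum_e \phi_e g_e ,
\]
and Cauchy--Schwarz then gives
\[
 \sum_lw_lx_l^2\le \Bigl(\sum_e\phi_e^2\Bigr)^{1/2}\Bigl(\sum_eg_e^2\Bigr)^{1/2}.
\]
So the whole matter reduces to bounding $\sum_e\phi_e^2$ by a multiple of the variance.

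Here is the main obstacle. We need $\sum_e\phi_e^2\le \frac{W^2}{8m}\sum_lw_lx_l^2$; combining this with the previous display and squaring then gives \eqref{eq:weightedvariance}. I would try to bound each $\phi_e^2$ using $\sum_l w_lx_l=0$. Applying Cauchy--Schwarz to $S_e$ and to its complement gives
\[
 \phi_e^2\le \bigl(s_e(1-s_e)W\bigr)\sum_l w_lx_l^2 ,
\]
and $s_e(1-s_e)\le 1/4$.

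This per-edge bound is too weak, because summing over the $k-1$ edges loses a factor $k$. The repair is to use the minimum weight: the number of vertices satisfies $k\le W/m$. The quantities $s_e(1-s_e)$ along a path behave like samples of $t(1-t)$, and I expect an integral comparison to give
\[
 \sum_e s_e(1-s_e)\le \frac{W}{m}\int_0^1t(1-t)\,dt=\frac{W}{6m}.
\]
This crude per-edge approach is not sharp. Matching the constant $8$ requires a sharper version: treat the one-dimensional extremal case, where $T$ is a path and the weights are concentrated at the two end vertices. I would therefore try to prove directly, by a two-cluster rearrangement, that the worst case is two heavy end vertices of weights near $W/2$ joined through light vertices of weight $m$. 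In that configuration the right side has resistance-like behaviour, with total effective resistance at most $(W/m)$ times $W/(4\cdot 2)$.

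Sharpness of $8$ then comes from a path with heavy endpoints. Take weights $W/2$ at the two ends and the intermediate weights equal to $m$, with $z$ linear along the path. The ratio of the two sides tends to $W^2/(8m)$, after accounting for the path length $W/m$ and the variance $W/4$ times the squared end-to-end difference divided by the number of edges.
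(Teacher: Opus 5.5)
\textbf{There is a genuine gap.} You correctly isolate the main obstacle, $\sum_e\phi_e^2\le\frac{W^2}{8m}\sum_l w_lx_l^2$, but you never prove it, and it is no easier than the lemma.

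On a tree the two statements are equivalent. The flow with divergence $\mathrm{diag}(w)\x$ is unique, so $\sum_e\phi_e^2=\x^\top\mathrm{diag}(w)L(T)^{+}\mathrm{diag}(w)\x$. The best constant here and the best constant in \eqref{eq:weightedvariance} are then both $1/\lambda_2$ for $L(T)\z=\lambda\,\mathrm{diag}(w)\z$. The flow identity followed by Cauchy--Schwarz only restates the problem in dual form.

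Your per-edge bound $\phi_e^2\le s_e(1-s_e)W\sum_lw_lx_l^2$ is correct, but this route cannot reach $8$:
\begin{itemize}
\item With $s_e(1-s_e)\le 1/4$ and $k\le W/m$, it already gives the lemma with $4m$ in place of $8m$. The loss is a factor $2$, not a factor $k$.
\item Your integral comparison, even if proved for trees, would give only $6m$.
\item No estimate of $\sum_e s_e(1-s_e)$ can give $8m$. For a path with $k$ equal weights, $\sum_e s_e(1-s_e)=(k^2-1)/(6k)$, which exceeds $W/(8m)=k/8$ for every $k\ge3$.
\end{itemize}
The ``two-cluster rearrangement'' is a hope, not an argument.

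The missing idea is the ordering argument.
\begin{itemize}
\item Sort so that $z_1\le\cdots\le z_k$ and put $b_h=z_{h+1}-z_h$. Connectivity means every cut is crossed by an edge, so $\sum_{ij\in E(Q)}(z_i-z_j)^2\ge\sum_h b_h^2$.
\item Write the variance as $W^{-1}\sum_{i<j}w_iw_j(z_j-z_i)^2$ and use $(z_j-z_i)^2\le(j-i)\sum_{h=i}^{j-1}b_h^2$. Then $b_h^2$ receives the coefficient $W^{-1}\sum_{i\le h<j}w_iw_j(j-i)$.
\item Place vertex $i$ at the midpoint $s_i$ of a subinterval of length $w_i$ in $[0,W]$. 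This gives $m(j-i)\le s_j-s_i$ and $\sum_{i\le h<j}w_iw_j(s_j-s_i)=WA_hB_h/2\le W^3/8$.
\end{itemize}
So every cut gets a coefficient at most $W^2/(8m)$, uniformly. Nothing accumulates over the number of cuts, and that accumulation is exactly what your per-edge approach suffers from.

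\textbf{Your sharpness argument is also wrong.} End weights $W/2,W/2$ plus intermediate weights $m$ sum to more than $W$. Taken literally, your numbers (path length $W/m$, variance $\frac W4\Delta^2$, energy $m\Delta^2/W$) give the ratio $W^2/(4m)$, which would contradict the lemma itself. With correctly normalized weights, a path with heavy ends attains equality only when there are no intermediate vertices. For long paths with linear $\z$, the left side is at most about $3/4$ of the right side. The witness you need is simply $Q=K_2$, $w_1=w_2=1$, $\z=(-1,1)$, where both sides of \eqref{eq:weightedvariance} equal $2$.
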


\begin{proof}
Relabel the vertices so that $z_1\le\cdots\le z_k$, retaining the weight
belonging to each vertex, and write $b_h=z_{h+1}-z_h$.
For an edge $ij$ with $i<j$, nonnegativity of the $b_h$ gives
\[
 (z_j-z_i)^2
 =\left(\sum_{h=i}^{j-1}b_h\right)^2
 \ge\sum_{h=i}^{j-1}b_h^2.
\]
Every cut between consecutive positions is crossed by an edge, because
$Q$ is connected. Summing the last inequality therefore gives
\begin{equation}\label{eq:weightedorderedpath}
 \sum_{ij\in E(Q)}(z_i-z_j)^2\ge\sum_{h=1}^{k-1}b_h^2.
\end{equation}
The weighted variance identity and Cauchy--Schwarz give
\begin{align}
 \sum_iw_i(z_i-\bar z)^2
 &=\frac1W\sum_{i<j}w_iw_j(z_j-z_i)^2\notag\\
 &\le\frac1W\sum_{h=1}^{k-1}b_h^2
      \sum_{i\le h<j}w_iw_j(j-i).
 \label{eq:variancepaths}
\end{align}
We bound the inner sum explicitly.

Partition the interval $[0,W]$ into consecutive intervals of lengths
$w_1,\ldots,w_k$, and let $s_i$ be the midpoint of interval $i$.
Since $s_{i+1}-s_i=(w_i+w_{i+1})/2\ge m$, we have
\[
 m(j-i)\le s_j-s_i.
\]
For a fixed $h$, put $A=\sum_{i\le h}w_i$ and $B=W-A$.
Integration of the coordinate over the two portions of $[0,W]$ gives
\begin{align*}
 \sum_{i\le h}w_i s_i&=\frac{A^2}{2},\\
 \sum_{j>h}w_j s_j&=\frac{W^2-A^2}{2}.
\end{align*}
Consequently,
\begin{align*}
 \sum_{i\le h<j}w_iw_j(j-i)
 &\le\frac1m\sum_{i\le h<j}w_iw_j(s_j-s_i)\\
 &=\frac1m\left(A\frac{W^2-A^2}{2}-B\frac{A^2}{2}\right)\\
 &=\frac{WAB}{2m}\\
 &\le\frac{W^3}{8m}.
\end{align*}
Substitution into \eqref{eq:variancepaths}, followed by
\eqref{eq:weightedorderedpath}, proves \eqref{eq:weightedvariance}.
For sharpness, take $Q=K_2$, $w_1=w_2=1$, and $(z_1,z_2)=(-1,1)$.
Then both sides of \eqref{eq:weightedvariance} are $2$.
\end{proof}

\subsection{Minimum degree comparable to the order}
Here the minimum degree is a positive proportion of the order. A vector
with small energy cannot change much across most incident edges. We keep
those edges and compare the resulting connected parts.

The diameter bound in \cite{Caccetta,Erdos} gives, for every
connected graph $H$,
\begin{equation}\label{eq:densediameter}
 \dm(H)<\frac{3|V(H)|}{\delta(H)+1}.
\end{equation}
This estimate is valid without fixing the minimum degree.

\begin{lemma}\label{lem:denseminimum}
Let $G_n$ be connected  graphs of orders $n\to\infty$, with
minimum degrees $\delta_n\ge c n$ for some fixed $c>0$.
Then
\[
 \liminf_{n\to\infty}\frac{n^2\mu(G_n)}{\delta_n}\ge8.
\]
Regularity is not required.
\end{lemma}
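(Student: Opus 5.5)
The plan is to argue by contradiction along a subsequence, coarse-grain the Fiedler vector over a bounded number of dense parts, and then apply \Cref{lem:weightedvariance} to the resulting small quotient graph. Fix a small $\eta>0$ and work along a subsequence on which $n^2\mu(G_n)/\delta_n$ converges to its $\liminf$; we may assume this limit is at most $8$, since otherwise there is nothing to prove. In particular $\mu=\mu(G_n)\le 9\delta_n/n^2\le 9/n$. Let $\x$ be a mean-zero unit Fiedler vector. Put $\tau=\eta n^{-1/2}$ and let $H$ be the spanning subgraph of $G_n$ retaining exactly the edges with $|x_u-x_v|\le\tau$. Every discarded edge contributes more than $\tau^2$ to $\x^\top L\x=\mu$. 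Hence at most $\mu/\tau^2\le 9/\eta^2$ edges are discarded, a bound independent of $n$. Consequently $\delta(H)\ge\delta_n-9\eta^{-2}\ge cn/2$ for large $n$.

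The first main step is to control the components of $H$. Each component has more than $\delta(H)\ge cn/2$ vertices, so there are $k\le 2/c$ components $P_1,\dots,P_k$. Applying the diameter bound \eqref{eq:densediameter} to each component gives $\dm(H[P_i])<3|P_i|/(\delta(H)+1)\le 6/c$. Since every retained edge changes $x$ by at most $\tau$, all values on $P_i$ lie within $6\tau/c=O_c(\eta n^{-1/2})$ of the average $z_i$ over $P_i$. Let $Q$ be the quotient graph on $\{1,\dots,k\}$, with $ij\in E(Q)$ when $G_n$ has an edge between $P_i$ and $P_j$. Then $Q$ is connected because $G_n$ is. Moreover $k\ge2$: if $k=1$, all entries would be within $O(\eta n^{-1/2})$ of the common mean $0$, contradicting $\|\x\|=1$ once $\eta$ is small relative to $c$.

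Next I translate norm, mean and energy into weighted quantities with weights $w_i=|P_i|/n$. These satisfy $W=1$ and $m=\min_i w_i\ge(\delta_n-9\eta^{-2}+1)/n$. Writing $x_v=z_i+e_v$ with $|e_v|=O(\eta n^{-1/2})$ gives $\sum_iw_iz_i=0$ exactly, since $\x$ has mean zero. It also gives $n\sum_iw_iz_i^2=1-\sum_ve_v^2\ge 1-O(\eta^2)$, because the cross terms vanish within each part. For the energy, choose one $G_n$-edge $uv$ across each edge $ij$ of $Q$. Then $|x_u-x_v|\ge|z_i-z_j|-O(\eta n^{-1/2})$. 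Using $(a-b)^2\ge(1-\eta)a^2-b^2/\eta$, this yields
\[
\mu\ge(1-\eta)\sum_{ij\in E(Q)}(z_i-z_j)^2-O_c(\eta/n).
\]
\Cref{lem:weightedvariance} now gives $\sum_{ij\in E(Q)}(z_i-z_j)^2\ge 8m\sum_iw_iz_i^2\ge 8m(1-O(\eta^2))/n$. Combining these,
\[
\frac{n^2\mu}{\delta_n}\ge 8(1-\eta)(1-O(\eta^2))\frac{\delta_n-9\eta^{-2}}{\delta_n}-O_c(\eta)\frac{n}{\delta_n}.
\]
Because $\delta_n\ge cn$, the last term is $O_c(\eta)$, and the fraction tends to $1$. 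Letting $n\to\infty$ and then $\eta\to0$ gives the bound $8$.

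I expect the main obstacle to be the bookkeeping that makes all the errors relative and of size $O_c(\eta)$. The threshold $\tau$ has to be on the natural scale $n^{-1/2}$ of the entries, as confirmed by \Cref{lem:smallentries}. The target energy $8\delta_n/n^2\ge 8c/n$ must dominate the absolute errors of size $\eta/n$; this is exactly where $\delta_n\ge cn$ is used. Finally, the minimum weight must be compared with $\delta_n/n$ and not merely with $c$, so that the final constant is exactly $8$ and not a weaker multiple.
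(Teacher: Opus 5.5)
Your proposal is correct and follows essentially the same route as the paper. Both arguments threshold the edge differences of the Fiedler vector, use the diameter bound \eqref{eq:densediameter} to obtain boundedly many nearly constant parts, and apply \Cref{lem:weightedvariance} with weights $|P_i|/n$, comparing the minimum weight with $\delta_n/n$.

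The differences are only cosmetic. You use the threshold $\eta n^{-1/2}$ on the unit vector and then send an auxiliary $\eta\to0$, which bounds the total number of deleted edges, and you use $(a-b)^2\ge(1-\eta)a^2-b^2/\eta$ over all edges of the quotient. The paper instead normalizes to squared norm $n$ and uses the threshold $n^{-1/4}$, so no second limit is needed, and it compares energies along a spanning tree via the triangle inequality.
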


\begin{proof}
It suffices to consider an arbitrary subsequence on which
\[
 \frac{n^2\mu(G_n)}{\delta_n}\le C
\]
for some fixed $C$: any subsequence violating the asserted lower
limit would have this property.
Write $G=G_n$ and $\delta=\delta_n$.
Choose a mean-zero Fiedler vector $\y$ normalized by
$\sum_v y_v^2=n$. Its energy is
\begin{equation}\label{eq:boundedenergy}
 E:=\sum_{uv\in E(G)}(y_u-y_v)^2=n\mu(G)\le C,
\end{equation}
since $\delta\le n$.

Let $a_n=n^{-1/4}$. Form a spanning subgraph $H$ by retaining exactly
the edges for which $|y_u-y_v|\le a_n$.
At any vertex, each deleted edge contributes more than $a_n^2$ to
\eqref{eq:boundedenergy}. Thus at most $C/a_n^2=C\sqrt n$ incident
edges are deleted, and
\begin{equation}\label{eq:retaineddegree}
 \delta(H)\ge\delta-C\sqrt n.
\end{equation}
For sufficiently large $n$, this lower bound is at least $cn/2$.
Let $V_1,\ldots,V_k$ be the vertex sets of the connected components of
$H$, and write $n_i=|V_i|$. Each component has at least
$\delta-C\sqrt n+1$ vertices, so $k$ is bounded in terms of $c$.
By \eqref{eq:densediameter} and \eqref{eq:retaineddegree}, each component
also has diameter at most $6/c$. Every retained edge has value difference
at most $a_n$. It follows that
\begin{equation}\label{eq:oscillation}
 \max_{u,v\in V_i}|y_u-y_v|\le \omega_n,
\end{equation}
where $\omega_n=(6/c)n^{-1/4}$.

Put $w_i=n_i/n$, and let $z_i$ be the average of $\y$ on $V_i$.
The normalization of $\y$ gives
\begin{align}
 \sum_iw_i&=1,\notag\\
 \sum_iw_i z_i&=0,\notag\\
 \sum_iw_i z_i^2
 &=1-\frac1n\sum_i\sum_{v\in V_i}(y_v-z_i)^2\notag\\
 &\ge1-\omega_n^2.
 \label{eq:componentvariance}
\end{align}
In particular, $k\ge2$ for sufficiently large $n$: if $k=1$, its
average would be zero, contradicting the last inequality.
Moreover,
\begin{equation}\label{eq:minmass}
 \min_iw_i\ge\frac\delta n-\frac C{\sqrt n}.
\end{equation}

Contract the components of $H$ in $G$. The resulting graph is connected.
Choose a spanning tree $T$ on its $k$ vertices, and choose one original
edge $u_{ij}v_{ij}$ for each edge $ij$ of $T$, with its endpoints in
$V_i,V_j$, respectively. These original edges are distinct. By
\eqref{eq:oscillation},
\[
 \big|(z_i-z_j)-(y_{u_{ij}}-y_{v_{ij}})\big|\le2\omega_n.
\]
The triangle inequality for the Euclidean norm therefore gives
\begin{align*}
 \left(\sum_{ij\in E(T)}(z_i-z_j)^2\right)^{1/2}
 &\le
 \left(\sum_{ij\in E(T)}(y_{u_{ij}}-y_{v_{ij}})^2\right)^{1/2}
   +2\omega_n\sqrt{k-1}\\
 &\le\sqrt E+2\omega_n\sqrt{k-1}.
\end{align*}
This compares the values of one vector; no eigenvalue monotonicity under
contraction is assumed. Since $E$ and $k$ are bounded, squaring shows that
\begin{equation}\label{eq:treeenergy}
 \sum_{ij\in E(T)}(z_i-z_j)^2\le E+O_{c,C}(\omega_n).
\end{equation}
Apply \Cref{lem:weightedvariance} to $T$, with the weights $w_i$.
Equations \eqref{eq:componentvariance}--\eqref{eq:treeenergy} give
\begin{align*}
 E+O_{c,C}(\omega_n)
 &\ge8\min_iw_i\sum_iw_i z_i^2\\
 &\ge8\left(\frac\delta n-\frac C{\sqrt n}\right)(1-\omega_n^2).
\end{align*}
As $n/\delta\le1/c$ and $E=n\mu(G)$, multiplication by $n/\delta$
yields
\[
 \frac{n^2\mu(G)}\delta\ge8-o(1).
\]
This proves the lemma.
\end{proof}

\subsection{The uniform estimate and its sharpness}
We combine the finite-order bound with the estimate for minimum
degree comparable to the order. The matching examples are the
comparison chains when $r=3,4,5$, and two large cliques joined by
one edge when $r\ge6$.

\begin{proof}[Proof of \Cref{thm:uniformminimum}]
Suppose first that the uniform lower bound is false. Then, for
some $\varepsilon>0$, there are connected simple graphs $G_j$
of orders $n_j\to\infty$ and minimum degrees $\delta_j\ge r$ such
that
\[
 \frac{n_j^2\mu(G_j)}{\delta_j}<c_r-\varepsilon.
\]
Pass to a subsequence on which $\delta_j/n_j\to\rho\in[0,1]$.
If $\rho=0$, \Cref{lem:lowerbound}, with $d=\delta_j$, gives
\[
 \frac{n_j^2\mu(G_j)}{\delta_j}
 \ge\left(1-\frac1{\delta_j}\right)n_j^2p_{n_j+2\delta_j}.
\]
Since $(n_j+2\delta_j)/n_j\to1$, we have
$n_j^2p_{n_j+2\delta_j}\to\pi^2$. The lower limit is therefore
at least $(r-1)\pi^2/r\ge c_r$, a contradiction.
If $\rho>0$, then $\delta_j\ge(\rho/2)n_j$ for large $j$.
\Cref{lem:denseminimum} gives lower limit at least $8\ge c_r$,
again a contradiction. This proves the lower bound uniformly
over the minimum degree.

For sharpness when $r=3,4,5$, use the minimum-degree comparison
graphs from \Cref{lem:comparisonupper} with $d=r$. They exist at
every sufficiently large order, have minimum degree exactly $r$,
and satisfy
\[
 \frac{n^2\mu(G)}{\delta(G)}
 \le\frac{r-1}{r}\pi^2+O_r(n^{-1}).
\]
For $r\ge6$, let $a=\lfloor n/2\rfloor$ and $b=n-a$, and join
disjoint cliques $K_a,K_b$ by a single edge. For large $n$, its
minimum degree is $a-1\ge r$. Give every vertex of $K_a$ value
$b$ and every vertex of $K_b$ value $-a$. This vector has mean
zero, and only the joining edge contributes to its energy. Its
Rayleigh quotient is
\[
 \frac{(a+b)^2}{ab^2+ba^2}=\frac n{ab}.
\]
Consequently,
\[
 \frac{n^2\mu(G)}{\delta(G)}
 \le\frac{n^3}{ab(a-1)}\longrightarrow8.
\]
The lower bound and these constructions prove
\eqref{eq:uniformminimum}.
\end{proof}

The proof also gives a uniform conclusion for growing minimum
degree, including irregular graphs.

\begin{corollary}\label{prop:AFgrowing}
Let $G_j$ be connected simple graphs with orders $n_j\to\infty$
and minimum degrees $\delta_j\to\infty$. Then
\begin{equation}\label{eq:AF8}
 \liminf_{j\to\infty}\frac{n_j^2\mu(G_j)}{\delta_j}\ge8.
\end{equation}
If also $\delta_j=o(n_j)$, the lower bound improves to $\pi^2$.
\end{corollary}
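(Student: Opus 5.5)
The plan is to run the subsequence argument from the proof of \Cref{thm:uniformminimum}, now with $\delta_j\to\infty$ in place of a fixed lower bound $r$. Suppose \eqref{eq:AF8} fails. Then for some $\varepsilon>0$ there is a subsequence along which $n_j^2\mu(G_j)/\delta_j<8-\varepsilon$. Passing to a further subsequence, we may assume $\delta_j/n_j\to\rho\in[0,1]$, and we treat the two cases $\rho>0$ and $\rho=0$ separately.

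If $\rho>0$, then $\delta_j\ge(\rho/2)n_j$ for all large $j$. \Cref{lem:denseminimum} then gives a lower limit of at least $8$, which is a contradiction. That lemma requires neither regularity nor any behaviour of $\delta_j$ beyond linear growth, so it applies directly.

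If $\rho=0$, we apply \Cref{lem:lowerbound} with $d=\delta_j$. This is allowed because that lemma permits $d$ to vary with $n$, and $\delta_j\ge3$ for large $j$. It gives
\[
 \frac{n_j^2\mu(G_j)}{\delta_j}\ge\Bigl(1-\frac1{\delta_j}\Bigr)n_j^2\,p_{n_j+2\delta_j}.
\]
Since $\delta_j/n_j\to0$, we have $n_j^2p_{n_j+2\delta_j}=n_j^2\cdot2(1-\cos(\pi/(n_j+2\delta_j)))\to\pi^2$. Since $\delta_j\to\infty$, the factor $1-1/\delta_j\to1$. The lower limit is therefore at least $\pi^2>8$, again a contradiction. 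This proves \eqref{eq:AF8}.

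For the improvement, assume in addition that $\delta_j=o(n_j)$. Then every subsequence falls into the case $\rho=0$, and the same computation gives $\liminf_j n_j^2\mu(G_j)/\delta_j\ge\pi^2$ directly, with no subsequence extraction needed.

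The only point needing care is the use of \Cref{lem:lowerbound} for growing $d$. Its proof uses the interpolation of \Cref{lem:pathcomparison} and the cut counts of \Cref{lem:counts}, and all of these are exact finite statements with no hidden dependence on a fixed $d$. The paper already remarks this, so I expect no real obstacle; the argument is essentially the proof of \Cref{thm:uniformminimum} with $(r-1)/r$ replaced by $1-1/\delta_j\to1$.
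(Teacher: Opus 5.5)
Your proposal is correct and is essentially the paper's own proof. You pass to a subsequence along which $\delta_j/n_j$ converges, invoke \Cref{lem:denseminimum} when the limit is positive, and apply \Cref{lem:lowerbound} with $d=\delta_j$ (valid once $\delta_j\ge3$) when the limit is zero; that case gives the bound $\pi^2>8$ and, at the same time, the improvement when $\delta_j=o(n_j)$.
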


\begin{proof}
When $\delta_j=o(n_j)$, apply \Cref{lem:lowerbound} with
$d=\delta_j$. The factor $1-1/\delta_j$ tends to one, and
$n_j^2p_{n_j+2\delta_j}\to\pi^2$.
For a general sequence, every subsequence on which
$\delta_j/n_j$ has a positive limit is covered by
\Cref{lem:denseminimum}. A subsequence with limit zero is covered
by the preceding argument. Passing to such a convergent
subsequence rules out a lower limit smaller than $8$.
\end{proof}

\subsection{The regular-graph conclusion}
For a $d$-regular graph, the transition matrix of the simple random
walk is $A/d$. Thus its spectral gap is $\mu(G)/d$, and its
relaxation time is $d/\mu(G)$. The minimum-degree theorem now
gives the uniform regular-graph bound.

\begin{proof}[Proof of \Cref{thm:aldousfill}]
For $d\ge3$, apply \Cref{thm:uniformminimum} with $r=3$ and use
$\delta(G)=d$. The lower bound is uniform over all these degrees.
For $d=2$, connectedness gives $G=C_n$, and
\[
 \frac{n^2\mu(C_n)}2
 =n^2\bigl(1-\cos(2\pi/n)\bigr)\longrightarrow2\pi^2,
\]
which is larger than $2\pi^2/3$.
Finally, the cubic comparison graphs from
\Cref{lem:comparisonupper} exist at every sufficiently large even
order and satisfy $\mu(G)\le2\pi^2/n^2+O(n^{-3})$.
Together with the lower bound, this proves sharpness along even
orders.
\end{proof}

\end{document}